  \pgfplotsset{compat=newest}
\pgfplotsset{every axis/.append style={
        scaled ticks = false, 
        tick label style={/pgf/number format/fixed}
    }
}
\newlength\figureheight 
\newlength\figurewidth 
\newtheorem{theorem}{Theorem}
\numberwithin{theorem}{section}
\newtheorem{proposition}[theorem]{Proposition}
\journal{...}
\begin{document}

\begin{frontmatter}

\title{FDTD schemes for Maxwell's equations with embedded perfect electric conductors based on the correction function method}

\author{Y.-M. Law\fnref{myfootnote1}} \author{J.C. Nave\fnref{myfootnote2}}
\address{Department of Mathematics and Statistics, McGill University, Montr\'{e}al, QC H3A 0B9, Canada.}
\fntext[myfootnote1]{yann-meing.law-kamcio@mcgill.ca}
\fntext[myfootnote2]{jean-christophe.nave@mcgill.ca}

%


\begin{abstract}
In this work, 
	we propose staggered FDTD schemes based on the Correction Function Method (CFM) to discretize 
	Maxwell's equations with embedded perfect electric conductor (PEC) boundary conditions.
The CFM uses a minimization procedure to compute a correction to a given FD 
	scheme in the vicinity of the embedded boundary to retain its order. 
The minimization problem associated with CFM approaches is analyzed in the context of Maxwell's equations with 
	embedded boundaries. 
In order to obtain a well-posed minimization problem,
	we propose fictitious interface conditions to fulfill the lack of information, 
	namely the surface current and charge density,
	on the embedded boundary.	
Fictitious interfaces can induce some issues for long time simulations and 
	therefore the penalization coefficient associated with fictitious interface conditions 
	must be chosen small enough.
We introduce CFM-FDTD schemes based on the well-known Yee scheme and 
	a fourth-order staggered FDTD scheme. 
Long time simulations and convergence studies are performed in 2-D for various geometries of the embedded boundary.
CFM-FDTD schemes have shown high-order convergence.
\end{abstract}


\end{frontmatter}


\section{Introduction}
In electromagnetic dynamics, 
	the perfect electric conductor is an important idealized material, 
	that allows surface charges and currents. 
Since the surface charge and current density are often unknown, 	
	PEC walls are modeled through the imposition of boundary conditions 
	that require the continuity of the normal component of the magnetic field and the tangential component 
	of the electric field on the boundary of a domain.

In free-space simulations involving PECs, 
	the interface between a PEC and its surrounding medium is treated 
	as an embedded boundary.	
Embedded boundary conditions can be difficult to treat, 
	particularly in a finite-difference context. 
Indeed, 
	challenges include the development of numerical methods 
	that can handle various complex geometries of the PEC
	without increasing the complexity of a numerical method while retaining high-order accuracy.
It is also worth mentioning that high-order schemes are important 
	to diminish the phase error for long time simulations \cite{Hesthaven2003}. 
	
Many numerical strategies are proposed to achieve high-order accuracy for problems involving 
	embedded PEC walls, 
	such as discontinuous Galerkin (DG) approaches \cite{Hesthaven2002}, 
	pseudospectral time-domain methods (PSTD) \cite{Yang1997,Fan2002,Galagusz2016} or 
	finite-difference time-domain (FDTD) schemes \cite{Yee1992,Jurgens1992,Ditkowski2001,Zhao2010,Wang2013}.
A discontinuous Galerkin approach can treat complex geometries of embedded PEC walls 
	by an appropriate mesh.
However, 
	a large number of unknowns for high-order accuracy is needed for these approaches.
This is due to the use of piecewise polynomial spaces that do not require continuity between 
	two elements of a mesh. 
Various strategies have been proposed to reduce the computational cost 
	of DG based methods, 
	such as parallel computing strategies or particular choices of basis functions \cite{Cockburn2004,Cohen2006}.
It is also worth mentioning that finite-element approaches with non-body-fitted grids 
	have been developed for electromagnetic problems,
	but low-order basis functions have been used \cite{Chen2007,Chen2009,Wang2019}. 
On the other hand, 
	finite-difference time-domain approaches use a simple Cartesian grid and have 
	low computational costs.
However, 	
	the imposition of embedded boundary conditions is far from trivial.
A naive approach is to use the Yee scheme \cite{Yee1966} with a staircased approximation of the 
	embedded boundary. 
Unfortunately, 
	this approach leads to a first-order scheme at best
	and sometimes to non-convergent approximations \cite{Ditkowski2001}. 
To overcome this issue, 
	many FDTD approaches have been proposed, 
	such as overlapping grids \cite{Yee1992}, 
	contour path methods \cite{Jurgens1992,Wang2013} and 
	locally modifications of FD schemes \cite{Ditkowski2001,Zhao2010}.  
A staircase-free FDTD scheme has been proposed in \cite{Ditkowski2001} to recover 
	a second-order scheme without significantly compromising the simplicity of the used FD scheme.
They explicitly impose PEC boundary conditions by locally modifying a finite-difference scheme in 
	the vicinity of the boundary.
Following the same idea,
	a fourth-order finite-difference scheme based on 
	the Matched Interface and Boundary (MIB) method \cite{Zhao2004} 
	has been proposed to handle embedded PEC walls 
	using the vector Helmholtz equation \cite{Zhao2010}.
This FD scheme is obtained 
	by deriving and explicitly imposing jump conditions for PEC walls on
	the embedded boundary.  
However,
	the complexity of a MIB based scheme increases with its order or the complexity of 
	the geometry of the embedded boundary because of the imposition 
	of high-order jump conditions \cite{Yu2007,Zhang2016}.
Finally, 
	pseudospectral methods have the advantage to require less 
	grid points per wavelength than FDTD approaches.
The Fourier and Chebyshev collocation methods with a multidomain strategy 
	have been used to achieve high-order accuracy \cite{Yang1997,Fan2002}.  
These approaches need a multidomain decomposition with an appropriate mesh grid
	for embedded boundary conditions, 
	and therefore
	an additional treatment of interfaces between subdomains is needed.
An alternative approach is to use a Fourier penalty method \cite{Galagusz2016}. 
Although this approach does not need a multidomain strategy, 
	there is some stability issues that limit the order of the 
	method in  two and three dimensions.
 
Another avenue to handle embedded boundary conditions is 
	FD schemes based on the Correction Function Method (CFM).
The CFM, 
	which was inspired by the Ghost Fluid Method (GFM), 
	has been originally developed to treat Poisson's equations with interface jump conditions with 
	arbitrarily complex interfaces. 
FD schemes based on the CFM achieve high-order by means of a minimization problem.
This numerical strategy has been successfully applied to Poisson problems with 
	constant and piecewise constant coefficients, 
	and interface jumps \cite{Marques2011,Marques2017}.
It is also worth mentioning that a CFM based strategy have also been used to treat 3-D Poisson equation with interface 
	jump conditions \cite{Marques2019}.
Afterward, 
	extensions of this method have been used to handle the wave equation and Maxwell's equations with 
	constant coefficients and interface jump conditions \cite{Abraham2018,LawMarquesNave2020}. 
Briefly, 
	the underline assumption of the CFM 
	is that jumps on the interface can be smoothly extended in the vicinity of the interface.
Therefore, 
	a system of partial differential equations (PDEs) coming from the original system of PDEs is derived 
	to model jumps around the interface.
The solution of this system of PDEs is called the correction function.
A square measure of the error associated with the correction function's system of PDEs 
	is then minimized to compute approximations of the correction function. 
Afterward, 
	these approximations are used to correct a given FD 
	scheme that involves nodes in different subdomains. 
It is worth mentioning that the additional computational cost associated with 
	minimization problems of the CFM is not negligible.
Hence, 
	this makes unavoidable the use of parallel computing 
	strategies \cite{Abraham2017}.
 
As mentioned before, 
	a FDTD strategy based on the CFM has been developed to handle Maxwell's equations with constant 
	coefficients and interface jump conditions \cite{LawMarquesNave2020}.
Even though this numerical scheme could also be used to enforce embedded boundary conditions,
	one needs to impose all information on the boundary, 
	that is both normal and tangential components of each electromagnetic field.
This is a major drawback when embedded PEC wall boundary conditions are considered.
As shown in this work, 
	a direct application of the numerical scheme proposed in \cite{LawMarquesNave2020} 
	for PEC wall boundary conditions leads to an ill-posed minimization problem because 
	of the lack of information on the embedded boundary.
Hence, 
	the main goal of this work is to overcome this issue and proposes high-order 
	FDTD schemes based on the CFM, 
	which are referred as CFM-FDTD schemes,
	to handle embedded PEC wall boundary conditions.
The algorithm presented in this paper also provides an important 
	stepping stone towards a CFM-FDTD approach to handle Maxwell's interface problems.
			
We first extend CFM-FDTD schemes for PEC problems 
	for which the surface current and charge density are unknown.
Afterward, 
	we describe a construction of appropriate local patches 
	that are needed for minimization problems.
This construction reduces the computation cost of the CFM while 
	guarantees the uniqueness of the correction function for a given node to 
	be corrected.
We then introduce CFM-FDTD schemes based on the well-known Yee scheme and 
	a fourth-order staggered FDTD scheme. 
Finally, 
	numerical examples based on the transverse magnetic (TM$_z$) mode are performed to verify 
 	the proposed CFM-FDTD schemes.

The paper is organized as follows.
In Section~\ref{sec:defPblm}, 
	we introduce Maxwell's equations with embedded PEC wall conditions. 
The CFM applied to Maxwell's equations is presented in Section~\ref{sec:CFM}.
In this section, 
	we propose an extension of the CFM for problems involving a PEC for which the surface current 
	and charge density are unknown. 
Minimization problems coming from the CFM are analyzed. 
The impact of the CFM on a FDTD scheme is also investigated and 
	the construction of local patches is described. 
We then introduce CFM-FDTD schemes based on the Yee scheme in Section~\ref{sec:Yee} and 
	on a fourth-order staggered FDTD scheme in Section~\ref{sec:fourthOrderScheme}.
Finally, 
	we perform numerical examples to verify the proposed CFM-FDTD schemes in Section~\ref{sec:numEx}.

\section{Definition of the Problem} \label{sec:defPblm}

Assume a domain $\Omega$ subdivided into two subdomains $\Omega^+$ and $\Omega^-$.
The interface $\Gamma$ between subdomains is independent of time 
	and allows solutions, 
	that is the magnetic field $\mathbold{H}$ and the electric field $\mathbold{E}$ in this work, 
	to be discontinuous along it. 
We define $\mathbold{H}^+$ and $\mathbold{E}^+$ as the solutions in $\Omega^+$, 
	and $\mathbold{H}^-$ and $\mathbold{E}^-$ as the solutions in $\Omega^-$.
The jumps are denoted as 
\begin{equation*}
\begin{aligned}
\llbracket \mathbold{H} \rrbracket =&\,\, \mathbold{H}^+ - \mathbold{H}^-, \\
\llbracket \mathbold{E} \rrbracket =&\,\, \mathbold{E}^+ - \mathbold{E}^-.
\end{aligned}
\end{equation*}
We consider the boundary $\partial \Omega$ of $\Omega$ and a given time interval $I = [0,T]$.
Assuming linear media and a periodic domain, 
	Maxwell's equations with interface conditions are given by
\begin{subequations} \label{eq:pblmDefinition}
\begin{align}
\partial_t (\mu\,\mathbold{H}) + \nabla\times \mathbold{E} =&\,\, 0 \quad \text{in } \Omega \times I, \label{eq:Faraday} \\
\partial_t (\epsilon\,\mathbold{E}) - \nabla\times\mathbold{H} =&\,\, 0 \quad \text{in } \Omega \times I , \label{eq:AmpereMaxwell}\\
\nabla\cdot(\epsilon\,\mathbold{E}) =&\,\, \rho \quad \text{in } \Omega \times I , \label{eq:divE}\\
\nabla\cdot(\mu\,\mathbold{H}) =&\,\, 0 \quad \text{in } \Omega \times I , \label{eq:divH}\\
\hat{\mathbold{n}}\times\llbracket \mathbold{E} \rrbracket =&\,\, 0 \quad \text{on } \Gamma \times I ,\label{eq:tangentEInterf}\\
\hat{\mathbold{n}}\times\llbracket \mathbold{H} \rrbracket =&\,\, \mathbold{J}_s(\mathbold{x},t)  \quad \text{on } \Gamma \times I ,\label{eq:tangentHInterf}\\
\hat{\mathbold{n}}\cdot\llbracket \epsilon\,\mathbold{E} \rrbracket =&\,\, \rho_s(\mathbold{x},t) \quad \text{on } \Gamma \times I ,\label{eq:normalEInterf}\\
\hat{\mathbold{n}}\cdot\llbracket \mu\,\mathbold{H} \rrbracket =&\,\, 0 \quad \text{on } \Gamma \times I ,\label{eq:normalHInterf}\\
\mathbold{H}(\mathbold{x},0) =&\,\, \mathbold{H}_0(\mathbold{x})	\quad \text{in } \Omega, \label{eq:InitialCdnH}\\
\mathbold{E}(\mathbold{x},0) =&\,\, \mathbold{E}_0(\mathbold{x})	\quad \text{in } \Omega, \label{eq:InitialCdnE}
\end{align}
\end{subequations}
	where $\mu$ is the magnetic permeability, 
	$\epsilon$ is the electric permittivity,
	$\rho$ is the electric charge density,
	$\mathbold{J}_s$ is the surface current density, 
	$\rho_s$ is the surface charge density and 
	$\hat{\mathbold{n}}$ is the unit normal to the interface $\Gamma$ pointing toward $\Omega^+$.
	Fig.~\ref{fig:typicalDomain} illustrates a typical geometry of a domain $\Omega$.
	Without loss of generality, 
		we assume constant coefficients that are such that $\epsilon, \mu >0$ and $\rho = 0$. 
\begin{figure}[htbp]
 	\centering
		\setlength\figureheight{0.3\linewidth} 
		\setlength\figurewidth{0.35\linewidth} 
		\tikzset{external/export next=false}
%
%
\definecolor{mycolor1}{rgb}{0.00000,0.44700,0.74100}%
\begin{tikzpicture}
\begin{axis}[%
width=0.951\figurewidth,
height=\figureheight,
at={(0\figurewidth,0\figureheight)},
scale only axis,
xmin=0,
xmax=1,
ymin=0,
ymax=1,
axis line style={draw=none},
yticklabels=\empty,
xticklabels=\empty,
tick style={draw=none},
legend style={legend cell align=left,align=left,draw=white!15!black},
ylabel style={yshift=-5pt},xlabel style={yshift=2.5pt}
]
\addplot [color=blue,line width=0.8pt,solid]
  table[row sep=crcr]{%
0.75	0.55\\
0.768383691155712	0.567056285485863\\
0.784855939371776	0.586353079609766\\
0.798569247672799	0.607544505835653\\
0.808783283651588	0.630118192268234\\
0.814908287960172	0.653425855534761\\
0.816539300787274	0.676723220029148\\
0.813479080096982	0.699216694779779\\
0.805748304479489	0.720113788035931\\
0.793582461693234	0.738673991711258\\
0.777415669648107	0.754256830101566\\
0.757852505596638	0.766363942344277\\
0.735629678650627	0.774672444790369\\
0.711570021888213	0.779057373755364\\
0.686531762311539	0.779601705541283\\
0.661356319017503	0.776593244583852\\
0.636817963590651	0.770508511213973\\
0.61357854671968	0.761984593953246\\
0.592150159874138	0.75178070405503\\
0.57286808220072	0.740731832587438\\
0.555875694011952	0.729697420209933\\
0.541122262361211	0.719508274169579\\
0.528373671098689	0.710915085057808\\
0.517235331010121	0.704541799856712\\
0.507185718903124	0.700846803803962\\
0.497618308349223	0.700094370975715\\
0.48788911327206	0.702338193667119\\
0.477366703462387	0.707418035147949\\
0.465481391646248	0.714969718104235\\
0.4517703451712	0.724447815239603\\
0.435915638275791	0.735159603166056\\
0.417772716391469	0.74630812733257\\
0.397387362750012	0.757041649900452\\
0.375	0.76650635094611\\
0.351036977897187	0.773898951759353\\
0.326089339866257	0.778515940109955\\
0.300880372261704	0.779796300355625\\
0.276223968364629	0.777355071772135\\
0.252976437718672	0.771005649468394\\
0.231984821811756	0.760769465763369\\
0.214035011603472	0.746872499529074\\
0.199802985787138	0.729728904825291\\
0.189812299298019	0.709912874076284\\
0.184400561411509	0.68812060227235\\
0.183697076668585	0.66512484910402\\
0.18761311595587	0.641725065201822\\
0.195845484459602	0.61869632675674\\
0.207893209093245	0.596740392003832\\
0.223086334355692	0.576442049038377\\
0.240625045742688	0.558233576556572\\
0.259626683065719	0.542369609807538\\
0.279177704357766	0.528914027386286\\
0.298387346568558	0.517739694017157\\
0.316439622097695	0.50854106036347\\
0.332640397236881	0.500858786781107\\
0.34645661293846	0.494114777441185\\
0.357545209872456	0.487655334569043\\
0.365769976374519	0.480799613212578\\
0.371205307595373	0.472890209038588\\
0.374126697681612	0.463342567597689\\
0.374988630816851	0.451689972653066\\
0.374391337443451	0.437621149210248\\
0.37303858778221	0.421007986082703\\
0.371689260765654	0.401921513178496\\
0.37110581656491	0.380635019844539\\
0.372002990169758	0.357614024441967\\
0.375	0.33349364905389\\
0.380579330947101	0.309044762754784\\
0.389054720761967	0.285130980280279\\
0.400550380065498	0.262659193808722\\
0.414992747983782	0.242526735959632\\
0.432115274321156	0.225568494996845\\
0.45147587740097	0.212507314207483\\
0.472485908299546	0.203910805691148\\
0.494448709733373	0.200157307138778\\
0.516605239008747	0.201413134212458\\
0.538183768940384	0.207622567626085\\
0.558450417734776	0.218511208551704\\
0.576757205393503	0.233602490007808\\
0.592584493652186	0.252246299487896\\
0.605575028595216	0.273657902454885\\
0.615557346626805	0.29696470637777\\
0.622556990666661	0.321257912229455\\
0.6267947702146	0.345645796239216\\
0.628672135768096	0.369305268558684\\
0.628744571230722	0.391528473395405\\
0.627684683890353	0.411761519426597\\
0.626237340401908	0.429632939049313\\
0.625169715962851	0.444970137501007\\
0.625219459117422	0.457802865574245\\
0.627044304722356	0.468353582983459\\
0.631176384055404	0.477015419985697\\
0.637984189046328	0.484319238735193\\
0.647644665720762	0.490891992198984\\
0.660127270910301	0.497409132685517\\
0.67519106704659	0.504544198677694\\
0.692395100958555	0.512918883655448\\
0.711121467043621	0.523056852822891\\
0.730609647080229	0.535344325657581\\
0.75	0.55\\
};
\addplot [color=black,line width=1.5pt,solid]
  table[row sep=crcr]{%
1	1\\
1	0\\
0	0\\
0	1\\
1	1\\
1	1\\
};
\addplot [color=blue,line width=0.8pt,solid,->]
  table[row sep=crcr]{%
0.5926	0.2522\\
0.675925	0.159025\\
};
\end{axis}
\draw (2.5,3.0) node {\color{blue}$\Gamma$};	
\draw (2.9,0.9) node {\color{blue}$\hat{n}$};
\draw (2.15,2.1) node {\color{black}$\Omega^-$};
\draw (0.7,3.2) node {\color{black}$\Omega^+$};
\draw (-0.35,0.25) node {\color{black}$\partial\Omega$};
\end{tikzpicture}%
		\caption{Geometry of a domain $\Omega$ with an interface $\Gamma$.}
\label{fig:typicalDomain}
\end{figure}
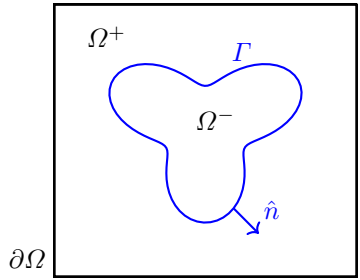

For problems involving a PEC, 
	we often do not know the surface current density $\mathbold{J}_s$ and the surface charge density $\rho_s$. 
Interface conditions \eqref{eq:tangentEInterf}-\eqref{eq:normalHInterf} are then reduced to 
\begin{equation*}
\begin{aligned}
	\hat{\mathbold{n}}\times\llbracket \mathbold{E} \rrbracket =&\,\, 0 \quad \text{on } \Gamma \times I ,\\
	\hat{\mathbold{n}}\cdot\llbracket \mu\,\mathbold{H} \rrbracket =&\,\, 0 \quad \text{on } \Gamma \times I.
\end{aligned}
\end{equation*}
Let us assume that subdomain $\Omega^-$ is a PEC,
	we then have $\mathbold{E}^-(\mathbold{x},t) = 0, \forall (\mathbold{x},t)\in\Omega^-\times I$.
Considering that the initial condition of the magnetic field is given by $\mathbold{H}^-(\mathbold{x},0) = 0$, 
	we also have $\mathbold{H}^-(\mathbold{x},t) = 0, \forall (\mathbold{x},t)\in\Omega^-\times I$.
Thus, 
	interface conditions on $\Gamma$ can be considered as embedded boundary conditions, 
	given by
\begin{equation*}
\begin{aligned}
	\hat{\mathbold{n}}\times\mathbold{E}^+=&\,\, 0 \quad \text{on } \Gamma \times I ,\\
	\hat{\mathbold{n}}\cdot (\mu\,\mathbold{H}^+) =&\,\, 0 \quad \text{on } \Gamma \times I,
\end{aligned}
\end{equation*}
 for $\Omega^+$.
In this work, 
 	we focus on problems involving PECs and therefore assume $\Gamma$ to be an embedded boundary of $\Omega^+$.

\section{Correction Function Method} \label{sec:CFM}

The smoothness of solutions is important when one wants to use FD schemes. 
Realizing that problem \eqref{eq:pblmDefinition} can have discontinuous solutions, 
	standard FD schemes cannot {\it a priori} be used around the embedded boundary $\Gamma$. 
The correction function method allows one to circumvent this issue.
The purpose of the CFM is to find a correction for a finite difference approximation that 
	involves nodes that belong to different subdomains. 
To find such a correction, 
	the CFM assumes that solutions in $\Omega^+\times  I$ and $\Omega^- \times I$ can be 
	smoothly extended in a small region $\Omega_\Gamma \times I$, 
	where $\Omega_\Gamma \subset \Omega$ encloses the embedded boundary $\Gamma$,
	in such a way that the original PDE is still satisfied.
A functional that is a square measure of the error of a PDE that describes the behaviour of jumps 
	or correction functions 
	in the vicinity of the embedded boundary is derived.
This functional is then minimized in a discrete functional space 
	to obtain approximations of the correction function in $\Omega_\Gamma \times I$.
In practice,
	we define a local patch $\Omega_\Gamma^h \subset \Omega_\Gamma$ for which  
	the correction function needs to be computed at a node $\mathbold{x}_c\in\Omega_\Gamma^h$
	and a time interval $I_\Gamma^h = [t_n - \Delta t_{\Gamma}, t_n]$. 
The additional computational cost associated with the CFM is not negligible 
	when compared with the original FDTD scheme. 
In fact, 
	the CFM consumes most of the computational time.
However, 
	a parallel implementation of the computation of correction functions 
	can be performed since minimization problems associated with local patches 
	are independent. 
We refer to \cite{Abraham2017} for more details about the benefits of a parallel implementation of the CFM.

In the following,
	we summarize the procedure for the CFM
	applied on Maxwell's equations when electromagnetic fields are known on the embedded boundary \cite{LawMarquesNave2020}.
Afterward, 
	we present an analysis of the minimization problem that is needed for the CFM.
The functional to be minimized
	is then modified and analyzed 
	for embedded PEC walls for which the surface current and charge density are 
	unknown.
We also investigate the impact of such a modification on a FDTD scheme.
We then describe a construction of local patches that reduces the computation cost of the CFM and 
	ensures an appropriate representation of the embedded boundary within the local patch.

Let us first introduce some notations.	
The inner product in $L^2\big(\Omega_{\Gamma}^{h}\times I_\Gamma^h\big)$ 
	is defined by
$$\langle\mathbold{v},\mathbold{w}\rangle = \int\limits_{I_\Gamma^h}\!\int\limits_{\Omega_{\Gamma}^{h}}\!\!\mathbold{v}\cdot\mathbold{w}\,\mathrm{d}V\,\mathrm{d}t$$
and we also use the notation
	$$\langle \mathbold{v},\mathbold{w} \rangle_{\Gamma} = \int\limits_{I_\Gamma^h}\!\int\limits_{\Omega_{\Gamma}^{h}\cap\Gamma}\!\!\mathbold{v}\cdot\mathbold{w} \, \mathrm{d}S\,\mathrm{d}t$$
	for legibility. 
The correction functions are defined as
\begin{equation*}
\begin{aligned}
\mathbold{D}_H =&\,\, \mathbold{H}^+ - \mathbold{H}^-, \\
\mathbold{D}_E =&\,\, \mathbold{E}^+ - \mathbold{E}^-.
\end{aligned}
\end{equation*}
	
Let us first assume that the surface current density and charge density are known. 
Following the same procedure as in \cite{LawMarquesNave2020}, 
	one can obtain the following quadratic functional to minimize
\begin{equation*}
\begin{aligned}
J(\mathbold{D}_H,&\mathbold{D}_E) = \frac{\ell_h}{2} \, \big\langle\mu\,\partial_t \mathbold{D}_H + \nabla\times\mathbold{D}_E,\mu\,\partial_t \mathbold{D}_H + \nabla\times\mathbold{D}_E\big\rangle \\
+&\,\, 
\frac{\ell_h}{2} \, \langle \epsilon\,\partial_t \mathbold{D}_E - \nabla\times\mathbold{D}_H,\epsilon\,\partial_t \mathbold{D}_E - \nabla\times\mathbold{D}_H \big\rangle\\
+&\,\, \frac{c_p}{2} \,\big\langle \hat{\mathbold{n}}\times\mathbold{D}_H - \mathbold{J}_s,\hat{\mathbold{n}}\times\mathbold{D}_H - \mathbold{J}_s\big\rangle_{\Gamma} + \frac{c_p}{2}\,\big\langle\hat{\mathbold{n}}\cdot \mathbold{D}_H, \hat{\mathbold{n}}\cdot \mathbold{D}_H \big\rangle_{\Gamma}\\
+&\,\, \frac{c_p}{2} \, \big\langle\hat{\mathbold{n}}\times\mathbold{D}_E,\hat{\mathbold{n}}\times\mathbold{D}_E \big\rangle_{\Gamma}+\frac{c_p}{2} \, \big\langle\hat{\mathbold{n}}\cdot \mathbold{D}_E - \tfrac{\rho_s}{\epsilon},\hat{\mathbold{n}}\cdot \mathbold{D}_E - \tfrac{\rho_s}{\epsilon} \big\rangle_{\Gamma} ,
\end{aligned}
\normalsize
\end{equation*}	
	where $c_p>0$ is a penalization coefficient and 
	$\ell_h$ is the length in space of the patch.
We scale the integral over the domain by $\ell_h$ to ensure that all terms 
	in the functional $J$ behave in a similar way when the 
	computational grid is refined \cite{LawMarquesNave2020}.
To guarantee the divergence-free constraint \eqref{eq:divE} and \eqref{eq:divH}, 
	we minimize the functional $J$ in a divergence-free space-time polynomial spaces,
	namely
\begin{equation*}
	V = \big\{ \mathbold{v} \in \big[P^k\big(\Omega_{\Gamma}^{h}\times I_\Gamma^h\big)\big]^3 : \nabla\cdot\mathbold{v} = 0 \big\},
\end{equation*}
	where $P^k$ denotes the space of polynomials of degree $k$.
It is worth mentioning that basis functions of $V$ are based on high-degree divergence-free basis functions proposed in \cite{Cockburn2004} for discontinuous Galerkin approaches.
The problem statement is then 
\begin{equation} \label{eq:minPblm}
\text{Find } (\mathbold{D}_H,\mathbold{D}_E) \in V \times W \text{ such that } (\mathbold{D}_H,\mathbold{D}_E) \in  \underset{\mathbold{v}\in V, \mathbold{w} \in W}{\arg\min}J(\mathbold{v},\mathbold{w}),
\end{equation}
	where $W=V$.
The following proposition shows that the quadratic functional $J$ has a global minimizer
 	when finite-dimensional functional spaces are used and for an appropriate choice 
	of the penalization coefficient $c_p$. 
\begin{proposition} \label{lem:minAnalysis} 
Let us consider problem \eqref{eq:minPblm}, 
	and assume $\mathbold{v}$ and $\mathbold{w}$ to be basis functions of $V$ and $W$.
There exists a positive constant $\tilde{c} = \frac{c_p}{\ell_h}$ for which the functional $J$ has a unique global minimizer.   
\end{proposition}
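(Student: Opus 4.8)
The plan is to read problem~\eqref{eq:minPblm} as the minimization of a quadratic functional over the finite-dimensional space $V\times W$, and to reduce the existence and uniqueness of a global minimizer to the positive definiteness of its Hessian. Writing $\mathbold{u}=(\mathbold{D}_H,\mathbold{D}_E)$, we have $J(\mathbold{u})=\tfrac12\,a(\mathbold{u},\mathbold{u})-b(\mathbold{u})+\mathrm{const}$, where $a$ is the symmetric bilinear form obtained by dropping the data $\mathbold{J}_s,\rho_s$ from $J$ and $b$ collects the remaining linear terms. On a finite-dimensional space such a functional is strictly convex and coercive, hence possesses a unique global minimizer characterized by the normal equations, precisely when $a(\mathbold{u},\mathbold{u})>0$ for every $\mathbold{u}\neq 0$. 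Since $a(\mathbold{u},\mathbold{u})$ is a sum of squared $L^2$-seminorms, factoring out $\ell_h$ gives $a(\mathbold{u},\mathbold{u})=\ell_h\big[Q_{\mathrm{vol}}(\mathbold{u})+\tilde c\,Q_{\Gamma}(\mathbold{u})\big]$ with $\tilde c=c_p/\ell_h$, and both $Q_{\mathrm{vol}}\ge 0$ and $Q_{\Gamma}\ge 0$. Consequently, for any $\tilde c>0$ the null space of $a$ equals $\ker Q_{\mathrm{vol}}\cap\ker Q_{\Gamma}$ (which in particular exhibits the asserted constant), and the whole statement reduces to showing that this intersection is trivial.

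First I would characterize the two kernels. A function $\mathbold{u}\in V\times W$ lies in $\ker Q_{\mathrm{vol}}$ iff it satisfies the homogeneous Maxwell system $\mu\,\partial_t\mathbold{D}_H+\nabla\times\mathbold{D}_E=0$ and $\epsilon\,\partial_t\mathbold{D}_E-\nabla\times\mathbold{D}_H=0$ throughout $\Omega_\Gamma^h\times I_\Gamma^h$; it lies in $\ker Q_\Gamma$ iff the tangential and normal components of both $\mathbold{D}_H$ and $\mathbold{D}_E$ vanish on $\Gamma$, i.e. $\mathbold{D}_H=\mathbold{D}_E=0$ on $(\Gamma\cap\Omega_\Gamma^h)\times I_\Gamma^h$. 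Thus the result is a unique-continuation statement: a divergence-free polynomial solution of the homogeneous Maxwell system that vanishes on the space-time interface must vanish identically.

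The key observation, and the reason the fictitious conditions are needed, is that the spatial interface is a \emph{characteristic} surface for the Maxwell evolution alone (its space-time conormal is $(\hat{\mathbold{n}},0)$, and $\hat{\mathbold{n}}\times\mathbold{D}_H=\hat{\mathbold{n}}\times\mathbold{D}_E=0$ admits fields parallel to $\hat{\mathbold{n}}$), but becomes \emph{non-characteristic} once the divergence-free constraints $\nabla\cdot\mathbold{D}_H=\nabla\cdot\mathbold{D}_E=0$ are adjoined, since these supply the missing normal components. Working in local coordinates $(s,\boldsymbol{\sigma},t)$ with $s$ the signed normal distance to $\Gamma$, I would prove by induction on $k$ that $\partial_s^{\,k}\mathbold{D}_H=\partial_s^{\,k}\mathbold{D}_E=0$ on $\Gamma$. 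The base case $k=0$ is the boundary data. For the step, one applies $\partial_s^{\,k}$ to the two curl equations and to the two divergence equations and evaluates on $\Gamma$: all purely tangential and temporal derivatives of $\partial_s^{\,j}\mathbold{D}_{H},\partial_s^{\,j}\mathbold{D}_{E}$ with $j\le k$ vanish by the inductive hypothesis, so the curl equations yield $\hat{\mathbold{n}}\times\partial_s^{\,k+1}\mathbold{D}_E=\hat{\mathbold{n}}\times\partial_s^{\,k+1}\mathbold{D}_H=0$ and the divergence equations yield $\hat{\mathbold{n}}\cdot\partial_s^{\,k+1}\mathbold{D}_E=\hat{\mathbold{n}}\cdot\partial_s^{\,k+1}\mathbold{D}_H=0$; together these force $\partial_s^{\,k+1}\mathbold{D}_H=\partial_s^{\,k+1}\mathbold{D}_E=0$ on $\Gamma$. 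Since $\mathbold{D}_H,\mathbold{D}_E$ are polynomials, vanishing of all normal derivatives along an open piece of $\Gamma$ forces them to vanish on a full space-time neighbourhood, and a polynomial vanishing on an open set is identically zero on the connected patch. Hence $\mathbold{u}=0$, so $a$ is positive definite and $J$ admits a unique global minimizer.

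I would flag two points as the main obstacles. The delicate step is the non-characteristic/unique-continuation argument: one must verify that the interface is non-characteristic for the \emph{augmented} system (curls together with divergences), equivalently that imposing all four interface traces along with $\nabla\cdot\mathbold{u}=0$ determines the full normal derivative, and one must handle the curvature terms arising for a curved $\Gamma$, which enter the induction only through lower-order normal derivatives that already vanish. The second point is that the conclusion relies on the local patch actually containing a relatively open portion of $\Gamma$ over the whole of $I_\Gamma^h$ (guaranteed by the patch construction described later), so that vanishing on an open set is available to invoke the polynomial rigidity. One could alternatively replace the explicit induction by Holmgren's uniqueness theorem, which applies precisely because the augmented system has analytic (constant) coefficients and $\Gamma$ is non-characteristic.
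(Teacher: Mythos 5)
Your proposal is correct in outline but follows a genuinely different route from the paper. The paper works directly with the Hessian of $J$ restricted to pairs of basis functions $(\mathbold{v},\mathbold{w})$, writes it as the $2\times2$ block $\bigl[\begin{smallmatrix}\ell_h a + c_p b & \ell_h e\\ \ell_h e & \ell_h c + c_p d\end{smallmatrix}\bigr]$, and derives the explicit determinant criterion \eqref{eq:cdnHessian}, which is then satisfied for $\tilde{c}$ sufficiently large; the key structural input is that $b\neq0$ and $d\neq0$ because a basis function cannot be simultaneously orthogonal and collinear to $\hat{\mathbold{n}}$ on $\Gamma$. You instead reduce everything to the triviality of the null space of the homogeneous quadratic part, split that null space as $\ker Q_{\mathrm{vol}}\cap\ker Q_{\Gamma}$, and kill it with a unique-continuation induction on normal derivatives using the curl equations together with the divergence-free constraint built into $V$ and $W$. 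Your route is more structural and in fact yields a stronger conclusion (positive definiteness for \emph{every} $\tilde{c}>0$, not just large $\tilde{c}$) — which is consistent with the paper's own criterion once one notes that Cauchy--Schwarz gives $e^2\le a\,c$, so the right-hand side of \eqref{eq:cdnHessian} is nonpositive whenever $b,d>0$. What the paper's computation buys is elementarity and an explicit algebraic threshold; what yours buys is an explanation of \emph{why} the boundary terms control the kernel (the interface is characteristic for the curl system alone but not for the augmented system), which is exactly the mechanism that fails in the PEC case of Section~\ref{sec:PECwall} and motivates the fictitious interfaces. The price is that your central lemma — vanishing of a divergence-free polynomial Maxwell solution with zero trace on $(\Gamma\cap\Omega_\Gamma^h)\times I_\Gamma^h$ — is only sketched: the commutator/curvature terms in the induction, the case of non-smooth $\Gamma$ (e.g.\ the square cavity), and the requirement that the patch contain a relatively open surface piece all need to be written out before the argument is complete. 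You flag these points yourself, and none of them looks fatal, but as written the proof is an outline rather than a finished argument.
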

\begin{proof}
Let us first notice that problem \eqref{eq:minPblm} is an unconstrained minimization problem.
In this case, 
	if the hessian matrix coming from a quadratic functional is positive definite, 
	then the critical point is a global minimum. 
Let us consider $\mathbold{v}$ and $\mathbold{w}$ to be basis functions of $V$ and $W$.
The hessian matrix coming from problem \eqref{eq:minPblm} is given by 
\begin{equation*}
	\begin{bmatrix}
	\ell_h\,a + c_p\,b & \ell_h\,e\\
	\ell_h\,e & \ell_h\,c + c_p\,d
	\end{bmatrix} ,
\end{equation*}
	where 
\begin{equation*} 
	\begin{aligned}
	a =&\,\, \langle \mu\,\partial_t\mathbold{v},\mu\,\partial_t\mathbold{v}\rangle+\langle\nabla\times\mathbold{v},\nabla\times\mathbold{v}\rangle,\\
	b =&\,\, \langle \hat{\mathbold{n}}\times\mathbold{v},\hat{\mathbold{n}}\times\mathbold{v}\rangle_{\Gamma} + \langle\hat{\mathbold{n}}\cdot\mathbold{v},\hat{\mathbold{n}}\cdot\mathbold{v}\rangle_{\Gamma},\\
	c =&\,\, \langle\nabla\times\mathbold{w},\nabla\times\mathbold{w}\rangle + \langle\epsilon\,\partial_t \mathbold{w},\epsilon\,\partial_t \mathbold{w}\rangle,\\
	d =&\,\, \langle \hat{\mathbold{n}}\times\mathbold{w},\hat{\mathbold{n}}\times\mathbold{w}\rangle_{\Gamma} + \langle\hat{\mathbold{n}}\cdot\mathbold{w},\hat{\mathbold{n}}\cdot\mathbold{w}\rangle_{\Gamma},\\
	e = &\,\, \langle \mu\,\partial_t \mathbold{v},\nabla\times\mathbold{w}\rangle - \langle \epsilon\,\partial_t \mathbold{w},\nabla\times\mathbold{v}\rangle.
	\end{aligned}
\end{equation*}
Requiring the hessian matrix to be positive definite, 
	one finds the following conditions:
\begin{align}
	\ell_h\,a + c_p\,b >&\,\, 0, \label{eq:minCdn1}\\ 
	(\ell_h\,a + c_p\,b)\,(\ell_h\,c + c_p\,d) -  (\ell_h\,e)^2 >&\,\, 0. \label{eq:minCdn2}
\end{align}
Since $\mathbold{v}$ and $\mathbold{w}$ are basis functions of $V$ and $W$, 
	we have $\mathbold{v}\neq 0$ and $\mathbold{w}\neq0$.
Noticing that $\mathbold{v}$ and $\mathbold{w}$ cannot be both orthogonal and collinear 
	to $\hat{\mathbold{n}}$ except for the zero element, 
	we also have $b\neq0$ and $d\neq0$. 
Hence, 
	condition \eqref{eq:minCdn1} is always satisfied.	
However, 
	condition \eqref{eq:minCdn2} leads to the following criterion 
\begin{equation} \label{eq:cdnHessian}
	\tilde{c} > \frac{e^2-a\,c}{\tilde{c}\,b\,d} - \frac{a\,d+b\,c}{b\,d},
\end{equation}
	where 
	$\tilde{c} = \frac{c_p}{\ell_h}$.
For a sufficiently large $\tilde{c}$, 
	criterion \eqref{eq:cdnHessian} is satisfied.
\end{proof}


Let us now investigate the impact of the CFM based on minimization problem \eqref{eq:minPblm} 
	on a given FD scheme.
Assume a spatial finite difference operator noted $L$, 
	such that 
\begin{equation} \label{eq:withoutCFM}
	\partial_t \mathbold{U}(t) + L\,\mathbold{U}(t) = 0,
\end{equation}
	where $\mathbold{U}$ 
	is a vector containing $m$ unknowns that estimate electromagnetic fields on the grid points in space.
We consider that a correction is needed at $r$ nodes in the vicinity of the embedded boundary, 
	system \eqref{eq:withoutCFM} then becomes 
\begin{equation} \label{eq:withCFM}
	\partial_t \mathbold{U}(t) + L\,\mathbold{U}(t) + L\,A\,\mathbold{D}(t) = 0,
\end{equation}
	where $\mathbold{D}$ 
	is a vector containing $r$ values of the correction function coming from problem \eqref{eq:minPblm} and $A$ is a rectangular matrix of dimension $m\times r$ with either $0$ or $\pm 1$ as components depending where 
	the correction is needed.
Hence, 
	the correction function can therefore be considered as a time-dependent force term.
From Proposition~\ref{lem:minAnalysis}, 
	we have the existence and unicity of the coefficients of polynomial approximations of the correction function
	for an appropriate $c_p$. 
Since $\mathbold{D}_H(\mathbold{x}, t)$ and $\mathbold{D}_E(\mathbold{x},t)$ are polynomial functions, 
	and $\Omega_{\Gamma}^h\times I_{\Gamma}^h$ is a compact domain, 
	these approximations and their derivatives are bounded in the infinity norm. 
It is therefore sufficient to investigate the stability of the original FDTD scheme \eqref{eq:withoutCFM} to 
	identify any time-step criteria of the corresponding CFM-FDTD scheme (c.f. Theorem 5.1.1 in \cite{Gustafsson1995}).
As for the consistency, 
	it can be shown that the order in space of a CFM-FDTD scheme is $\min\{k,n\}$,
	where $k$ is the degree of polynomial approximations of the correction function and $n$ is the order of 
	the FD scheme in space (c.f. Proposition~\ref{lem:errorAnalysis}).

\subsection{PEC wall boundary conditions} \label{sec:PECwall}
Let us now focus on PEC wall boundary conditions.
By Proposition~\ref{lem:minAnalysis}, 
	one cannot just neglect interface conditions \eqref{eq:tangentHInterf} 
	and \eqref{eq:normalEInterf} for PEC problems for which $\mathbold{J}_s$ 
	and $\rho_s$ are unknown because 
	this leads to an ill-posed minimization problem.
To circumvent this issue, 
	we propose to use fictitious interface conditions, 
	given by
\begin{equation} \label{eq:fictInterfCdn}
\begin{aligned}
\hat{\mathbold{n}}_{1,i}\times(\mathbold{E}^+-\mathbold{E}^*) =&\,\, 0 \quad \text{on} \quad \Gamma_{1,i} \times I  \quad \text{for} \quad i=1,\ldots,N_1,\\
\hat{\mathbold{n}}_{2,i}\times(\mathbold{H}^+-\mathbold{H}^*) =&\,\, 0  \quad \text{on} \quad \Gamma_{2,i} \times I \quad \text{for} \quad i=1,\ldots,N_2,\\
\hat{\mathbold{n}}_{3,i}\cdot(\mathbold{E}^+-\mathbold{E}^*) =&\,\, 0 \quad \text{on} \quad \Gamma_{3,i} \times I \quad \text{for} \quad i=1,\ldots,N_3,\\
\hat{\mathbold{n}}_{4,i}\cdot(\mathbold{H}^+-\mathbold{H}^*) =&\,\, 0 \quad \text{on} \quad \Gamma_{4,i} \times I \quad \text{for} \quad i=1,\ldots,N_4,
\end{aligned}
\end{equation}
where $N_k$ is the number of fictitious interfaces $\Gamma_{k,i} \subset \Omega^+\cap\Omega_{\Gamma}^h$ 
	for $k=1,\dots,4$,
    	$\hat{\mathbold{n}}_{k,i}$ is the unit normal associated with fictitious interface $\Gamma_{k,i}$,
	and $\mathbold{H}^*$ and $\mathbold{E}^*$ are respectively finite difference approximations 
	of $\mathbold{H}^+$ and $\mathbold{E}^+$ in $\Omega^+$.
In subsection~\ref{sec:fictInterfImplementation}, 
	we provide more details on the implementation of these fictitious interface conditions.
The functional to minimize is then given by
\small
\begin{equation} \label{eq:functionalPEC}
\begin{aligned}
\tilde{J}(\mathbold{D}_H,&\mathbold{D}_E) = \frac{\ell_h}{2} \, \big\langle\mu\,\partial_t \mathbold{D}_H + \nabla\times\mathbold{D}_E,\mu\,\partial_t \mathbold{D}_H + \nabla\times\mathbold{D}_E\big\rangle \\
+&\,\, 
\frac{\ell_h}{2} \, \langle \epsilon\,\partial_t \mathbold{D}_E - \nabla\times\mathbold{D}_H,\epsilon\,\partial_t \mathbold{D}_E - \nabla\times\mathbold{D}_H \big\rangle\\
+&\,\, \frac{c_p}{2} \, \big\langle\hat{\mathbold{n}}\times\mathbold{D}_E ,\hat{\mathbold{n}}\times\mathbold{D}_E\big\rangle_{\Gamma} + \frac{c_p}{2}\,\big\langle\hat{\mathbold{n}}\cdot \mathbold{D}_H, \hat{\mathbold{n}}\cdot \mathbold{D}_H\big\rangle_{\Gamma}\\
+&\,\, \frac{c_{f}}{2\,N_{E}} \, \sum_{i=1}^{N_1} \big\langle \hat{\mathbold{n}}_{1,i}\times (\mathbold{D}_E - \mathbold{E}^*),\hat{\mathbold{n}}_{1,i}\times(\mathbold{D}_E- \mathbold{E}^*)\big\rangle_{\Gamma_{1,i}} \\
+&\,\, \frac{c_{f}}{2\,N_{H}} \, \sum_{i=1}^{N_2} \big\langle \hat{\mathbold{n}}_{2,i}\times(\mathbold{D}_H - \mathbold{H}^*),\hat{\mathbold{n}}_{2,i}\times(\mathbold{D}_H- \mathbold{H}^*)\big\rangle_{\Gamma_{2,i}} \\
+&\,\, \frac{c_{f}}{2\,N_{E}} \, \sum_{i=1}^{N_3}  \big\langle\hat{\mathbold{n}}_{3,i}\cdot (\mathbold{D}_E - \mathbold{E}^*),\hat{\mathbold{n}}_{3,i}\cdot (\mathbold{D}_E -  \mathbold{E}^*) \big\rangle_{\Gamma_{3,i}} \\
+&\,\, \frac{c_{f}}{2\,N_{H}} \, \sum_{i=1}^{N_4}  \big\langle\hat{\mathbold{n}}_{4,i}\cdot (\mathbold{D}_H -  \mathbold{H}^*),\hat{\mathbold{n}}_{4,i}\cdot (\mathbold{D}_H - \mathbold{H}^*) \big\rangle_{\Gamma_{4,i}},
\end{aligned}
\normalsize
\end{equation}
\normalsize
	where $c_f>0$ and $c_p>0$ are penalization coefficient,
	and $N_{H}=N_2+N_4$ and $N_{E}=N_1+N_3$ are the total number of fictitious interfaces for 
	each electromagnetic field. 
The problem statement is then 
\begin{equation} \label{eq:minPblmPEC}
\text{Find } (\mathbold{D}_H,\mathbold{D}_E) \in V \times W \text{ such that } (\mathbold{D}_H,\mathbold{D}_E) \in  \underset{\mathbold{v}\in V, \mathbold{w} \in W}{\arg\min}\tilde{J}(\mathbold{v},\mathbold{w}),
\end{equation}
	where $W=V$.	
The following proposition guarantees that there is a global minimizer for an appropriate choice of the penalization 
	coefficient $c_f$ and fictitious interfaces.
\begin{proposition} \label{lem:minAnalysisFictInterf} 
Let us consider problem \eqref{eq:minPblmPEC},
	and assume $\mathbold{v}$ and $\mathbold{w}$ to be basis functions of $V$ and $W$.
Moreover, 
	assume that there is collinear and orthogonal fictitious interfaces to each plane defined by the axis 
	of the Cartesian coordinate system and for each type of fictitious interface conditions \eqref{eq:fictInterfCdn}.
There exists a positive constant $\tilde{c} = \frac{c_f}{\ell_h}$ for which the functional $\tilde{J}$ has a unique global minimizer.   
\end{proposition}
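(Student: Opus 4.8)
The plan is to follow the template of the proof of Proposition~\ref{lem:minAnalysis}: since \eqref{eq:minPblmPEC} is an unconstrained quadratic minimization, it suffices to exhibit a choice of penalization coefficient for which the Hessian of $\tilde{J}$ is positive definite, which then forces the unique critical point to be a global minimizer. First I would write out this Hessian in the same $2\times 2$ block form with $\mathbold{v}=\mathbold{D}_H$ and $\mathbold{w}=\mathbold{D}_E$. The essential observation is that the two volume terms and the coupling term in $\tilde{J}$ are identical to those in $J$, so the cross block is again $\ell_h\,e$ and the diagonal blocks still contain $\ell_h\,a$ and $\ell_h\,c$; moreover, since $\mathbold{E}^*$ and $\mathbold{H}^*$ are fixed finite-difference data, they contribute only affine terms to $\tilde{J}$ and drop out of the Hessian entirely. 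What changes relative to Proposition~\ref{lem:minAnalysis} is only the boundary penalization: the real interface $\Gamma$ now supplies a single quadratic form in $\mathbold{v}$, the normal part $\langle\hat{\mathbold{n}}\cdot\mathbold{v},\hat{\mathbold{n}}\cdot\mathbold{v}\rangle_{\Gamma}$, and a single one in $\mathbold{w}$, the tangential part $\langle\hat{\mathbold{n}}\times\mathbold{w},\hat{\mathbold{n}}\times\mathbold{w}\rangle_{\Gamma}$, while the complementary components are controlled by the fictitious terms carrying the coefficient $c_f$. Collecting the real ($c_p$) and fictitious ($c_f$) contributions into boundary forms $\hat{b}=c_p\,b_{\Gamma}+c_f\,\tilde{b}$ in $\mathbold{v}$ and $\hat{d}=c_p\,d_{\Gamma}+c_f\,\tilde{d}$ in $\mathbold{w}$, the Hessian becomes
\begin{equation*}
\begin{bmatrix}
\ell_h\,a + \hat{b} & \ell_h\,e\\
\ell_h\,e & \ell_h\,c + \hat{d}
\end{bmatrix},
\end{equation*}
and positive definiteness again reduces to the two leading-minor conditions, in analogy with \eqref{eq:minCdn1}--\eqref{eq:minCdn2}.

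The main obstacle, and the reason the fictitious interfaces were introduced in the first place, is to show that $\hat{b}>0$ and $\hat{d}>0$ for every nonzero basis function. In Proposition~\ref{lem:minAnalysis} this was immediate because $\Gamma$ carried both the tangential and normal conditions for each field, so the corresponding form vanished only for the zero element. Here the real boundary controls only $\hat{\mathbold{n}}\cdot\mathbold{v}$ and $\hat{\mathbold{n}}\times\mathbold{w}$, so a field that is purely tangential (respectively purely normal) on $\Gamma$ is invisible to the $c_p$ terms, and strict positivity must therefore come from $\tilde{b}$ and $\tilde{d}$. This is exactly where the hypothesis on the fictitious interfaces enters: requiring, for each coordinate plane and for each type of condition in \eqref{eq:fictInterfCdn}, fictitious interfaces that are both collinear and orthogonal to that plane guarantees that the associated normals $\hat{\mathbold{n}}_{k,i}$ collectively span every Cartesian direction. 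Consequently the fictitious tangential conditions taken along two independent axes annihilate every component of $\mathbold{v}$ (respectively $\mathbold{w}$), and together with the normal conditions they force $\tilde{b}=0\Rightarrow\mathbold{v}\equiv 0$ and $\tilde{d}=0\Rightarrow\mathbold{w}\equiv 0$. Hence $\tilde{b}>0$ and $\tilde{d}>0$ on the basis functions, so $\hat{b},\hat{d}>0$ for any $c_f>0$ and the first leading-minor condition holds automatically.

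It then remains to verify the determinant condition. With $c_p$ and $\ell_h$ held fixed and $c_f$ taken as the free parameter, the determinant $(\ell_h\,a+c_p\,b_{\Gamma}+c_f\,\tilde{b})(\ell_h\,c+c_p\,d_{\Gamma}+c_f\,\tilde{d})-(\ell_h\,e)^2$ is a quadratic polynomial in $c_f$ whose leading coefficient $\tilde{b}\,\tilde{d}$ is strictly positive by the previous step; it is therefore positive for all sufficiently large $c_f$. Rewriting this threshold in terms of $\tilde{c}=\tfrac{c_f}{\ell_h}$ reproduces a criterion of the same type as \eqref{eq:cdnHessian} and yields the claimed positive constant for which the Hessian is positive definite, so that $\tilde{J}$ admits a unique global minimizer. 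I expect the only delicate point to be the rigorous justification that $\tilde{b}$ and $\tilde{d}$ are genuinely positive definite rather than merely nonnegative, that is, that no nonzero polynomial in $V$ can vanish on the prescribed collection of fictitious interfaces; this is precisely the content of the richness assumption on the $\Gamma_{k,i}$, and it is the crux distinguishing this proof from that of Proposition~\ref{lem:minAnalysis}.
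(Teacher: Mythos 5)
Your proposal is correct and follows essentially the same route as the paper's proof: write the Hessian of $\tilde{J}$ in the same $2\times 2$ block form as in Proposition~\ref{lem:minAnalysis}, observe that the $c_p$ boundary terms now control only $\hat{\mathbold{n}}\cdot\mathbold{v}$ and $\hat{\mathbold{n}}\times\mathbold{w}$ (so $b,d\geq 0$ only), invoke the richness of the fictitious interfaces to get $\tilde{b},\tilde{d}>0$, and conclude positive definiteness for $\tilde{c}=c_f/\ell_h$ sufficiently large via the determinant condition \eqref{eq:cdnHessianFictInterf}. Your closing remark correctly identifies the one point the paper treats tersely, namely that the prescribed collection of fictitious interfaces must force $\tilde{b}$ and $\tilde{d}$ to be strictly positive rather than merely nonnegative on nonzero basis functions.
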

\begin{proof}
The demonstration is similar to the proof presented in Proposition~\ref{lem:minAnalysis}.
The Hessian matrix coming from problem \eqref{eq:minPblmPEC} is given by 
\begin{equation*}
	\begin{bmatrix}
	\ell_h\,a + c_p\,b + c_f\,\tilde{b} & \ell_h\,e\\
	\ell_h\,e & \ell_h\,c + c_p\,d + c_f\,\tilde{d}
	\end{bmatrix} ,
\end{equation*}
	where $a$, 
	$c$ and $e$ are the same as in Proposition~\ref{lem:minAnalysis}, 
	but 
\begin{equation*}
	\begin{aligned}
	b =&\,\, \langle\hat{\mathbold{n}}\cdot\mathbold{v},\hat{\mathbold{n}}\cdot\mathbold{v}\rangle_{\Gamma},\\
	d =&\,\, \langle \hat{\mathbold{n}}\times\mathbold{w},\hat{\mathbold{n}}\times\mathbold{w}\rangle_{\Gamma},\\
	\tilde{b} =&\,\,   \frac{1}{N_{H}} \, \sum_{i=1}^{N_2}\langle \hat{\mathbold{n}}_{2,i}\times\mathbold{v},\hat{\mathbold{n}}_{2,i}\times\mathbold{v}\rangle_{\Gamma_{2,i}}  +  \frac{1}{N_{H}} \, \sum_{i=1}^{N_4} \langle\hat{\mathbold{n}}_{4,i}\cdot\mathbold{v},\hat{\mathbold{n}}_{4,i}\cdot\mathbold{v}\rangle_{\Gamma_{4,i}},\\
	\tilde{d} =&\,\,  \frac{1}{N_{E}} \,\sum_{i=1}^{N_1} \langle \hat{\mathbold{n}}_{1,i}\times\mathbold{w},\hat{\mathbold{n}}_{1,i}\times\mathbold{w}\rangle_{\Gamma_{1,i}} +  \frac{1}{N_{E}} \,  \sum_{i=1}^{N_3} \langle\hat{\mathbold{n}}_{3,i}\cdot\mathbold{w},\hat{\mathbold{n}}_{3,i}\cdot\mathbold{w}\rangle_{\Gamma_{3,i}}.
	\end{aligned}
\end{equation*}
One can notice that $b\geq0$ and $d\geq0$.
However,
	since there is fictitious interfaces that are collinear and orthogonal to each plane defined 
	by the axis of the coordinate system and for each fictitious interface condition, 
	we have $\tilde{b}\neq0$ and $\tilde{d}\neq0$. 
Requiring the Hessian matrix to be positive definite, 
	one finds the following conditions:
\begin{align}
	\ell_h\,a + c_p\,b + c_f\,\tilde{b} >&\,\, 0, \label{eq:minCdn1FictInterf}\\ 
	(\ell_h\,a + c_p\,b + c_f\,\tilde{b} )\,(\ell_h\,c + c_p\,d + c_f\,\tilde{d}) -  (\ell_h\,e)^2 >&\,\, 0. \label{eq:minCdn2FictInterf}
\end{align}
Condition \eqref{eq:minCdn1FictInterf} is always satisfied and 
	condition \eqref{eq:minCdn2FictInterf} leads to the following criterion 
\begin{equation} \label{eq:cdnHessianFictInterf}
	\tilde{c} > \frac{e^2-a\,c}{\tilde{c}\,\tilde{b}\,\tilde{d}} - \frac{c_p\,(a\,d+b\,c)}{c_f\,\tilde{b}\,\tilde{d}} - \frac{a\,\tilde{d}+\tilde{b}\,c}{\tilde{b}\,\tilde{d}}
		- \frac{c_p\,(b\,\tilde{d}+\tilde{b}\,d)}{\ell_h\,\tilde{b}\,\tilde{d}}-\frac{c_p^2\,b\,d}{\ell_h\,c_f\,\tilde{b}\,\tilde{d}},
\end{equation}
	where 
	$\tilde{c} = \frac{c_f}{\ell_h}$.
For a sufficiently large $\tilde{c}$, 
	criterion \eqref{eq:cdnHessianFictInterf} is satisfied.
\end{proof}
Let us now investigate the impact of the CFM based on minimization problem \eqref{eq:minPblmPEC} 
	on the stability of the original FD scheme. 
As shown previously, 
	we have a system of the form \eqref{eq:withCFM} with $\mathbold{D}$ coming from 
	problem~\eqref{eq:minPblmPEC}.
However, 
	this is not completely accurate since fictitious interface conditions \eqref{eq:fictInterfCdn}
	depend on FD solutions in $\Omega^+$.
Computing Gateaux derivatives and using a necessary condition to obtain a minimum, 
	we obtain
$$M\,\mathbold{c} = c_f\,\mathbold{b}_f + c_p\,\mathbold{b}_{\Gamma},$$
	where 
	$\mathbold{c}$ contains coefficients of polynomial approximations of the correction function,
	and $\mathbold{b}_f$ and $\mathbold{b}_{\Gamma}$ are associated with terms using 
	respectively fictitious interfaces and embedded boundaries.
Moreover, 
	we can define a linear operator $B$ that is such that 
	$\mathbold{b}_f = B\,\mathbold{U}$. 
From Proposition~\ref{lem:minAnalysisFictInterf},
	problem~\eqref{eq:minPblmPEC} is well-posed for appropriate fictitious interfaces, 
	$c_p$ and $c_f$,
	which leads to 
	$\mathbold{c} = c_f\,M^{-1}\, B\,\mathbold{U} + c_p\,M^{-1}\,\mathbold{b}_{\Gamma}$.
Hence,
	we have 
\begin{equation*} 
	\partial_t \mathbold{U}(t) + L\,(I + c_f\,A\,M^{-1}\, B)\,\mathbold{U}(t) + c_p\,L\,A\,M^{-1}\,\mathbold{b}_{\Gamma} = 0,
\end{equation*}
where $L$ is a finite difference operator, 
	$I$ is the identity operator and 
	$A$ is a linear operator that computes polynomial approximations of the correction function at nodes 
	where it is needed.
Since problem~\eqref{eq:minPblmPEC} is well-posed, 
	we assume that the term $L\,A\,M^{-1}\,\mathbold{b}_{\Gamma}$ can be bounded.
It is then sufficient to investigate the stability of 
\begin{equation*} \label{eq:withCFMPEC}
	\partial_t \mathbold{U}(t) + L\,(I + c_f\,A\,M^{-1}\, B)\,\mathbold{U}(t) = 0,
\end{equation*}
 to identify any time-step criteria of the corresponding CFM-FDTD scheme \cite{Gustafsson1995}. 
We remark that we recover the original FDTD scheme in the limit when $c_f \to 0$ 
	and therefore its properties.
We therefore assume that we should be close to the stability condition of the original FDTD scheme 
	for a sufficient small $c_f$.
This assumption is supported by numerical examples performed in Section~\ref{sec:numEx}. 

In order to choose an appropriate penalization coefficient $c_f$, 
	one should consider the following constraints. 
First, 
	the priority should be given to embedded boundary conditions 
	in problem \eqref{eq:minPblmPEC},
	that is $c_p>c_f$. 
Second, 
	the weight associated with fictitious interface conditions in the minimization 
	problem should diminish as the length of local patches $\ell_h$ goes to zero, 
	that is when the mesh grid size diminishes. 
This again enforces embedded boundary conditions and avoids any stability issues of  
	CFM-FDTD schemes as the time-step size is refined. 
However, 
	a too small value of $c_f$ could lead to poorly conditioned matrices 
	coming from the minimization problem~\eqref{eq:minPblmPEC}.
\subsection{Implementation of Fictitious Interface Conditions} \label{sec:fictInterfImplementation}
This short subsection focus on technical details concerning the implementation of fictitious interface conditions 
	\eqref{eq:fictInterfCdn}.
Since $\Omega^-$ is a PEC domain,
	it is common to consider $\mathbold{H}^-=0$ and $\mathbold{E}^-=0$ as explained in Section~\ref{sec:defPblm}.
The natural extension of these electromagnetic fields in the non-PEC domain,
	that is $\Omega^+$, 
	is then zero.
Hence, 
 	$\mathbold{D}_H = \mathbold{H}^+$ and $\mathbold{D}_E = \mathbold{E}^+$.
This allows us to enforce fictitious interface conditions in $\Omega^+$ 
	using finite difference approximations within it, 
	namely $\mathbold{H}^* \approx \mathbold{H}^+$ and 
	$\mathbold{E}^*\approx \mathbold{E}^+$.
	
To ease the implementation of fictitious interface conditions \eqref{eq:fictInterfCdn}, 
	we choose fictitious interfaces that are aligned with the mesh grid.
In other words, 
	fictitious interfaces $\Gamma_{k,i}$ are chosen in such a way that their normal $\hat{\mathbold{n}}_{i,k}$ 
	is an element of the standard basis in $\mathbb{R}^3$.
This facilitates the construction of space-time interpolating polynomials that use FD approximations. 
Let us consider the transverse magnetic (TM$_z$) mode (see Section~\ref{sec:TMz}),
	which is a 2-D simplification of Maxwell's equations,
	as an example.
In this case,
	the normal of a fictitious interface is either $\mathbold{n}_1 = (1,0)$ or $\mathbold{n}_2 = (0,1)$. 
Hence, 
	$\mathbold{n}_1\cdot \mathbold{H}^* = H_x^*$,
	$\mathbold{n}_2\cdot\mathbold{H}^* = H_y^*$, 
	$\mathbold{n}_1\times\mathbold{H}^* = H_y^*$, 
	$\mathbold{n}_2\times\mathbold{H}^* = -H_x^*$,
	$\mathbold{n}_1\times\mathbold{E}^* = (0,-E_z^*)$ and 
	$\mathbold{n}_2\times\mathbold{E}^* = (E_z^*,0)$.
Fig.~\ref{fig:fictInterfExample} illustrates fictitious interfaces that can be generated for a given local patch and 
	a staggered grid that is described in Section~\ref{sec:2D}.
\begin{figure}
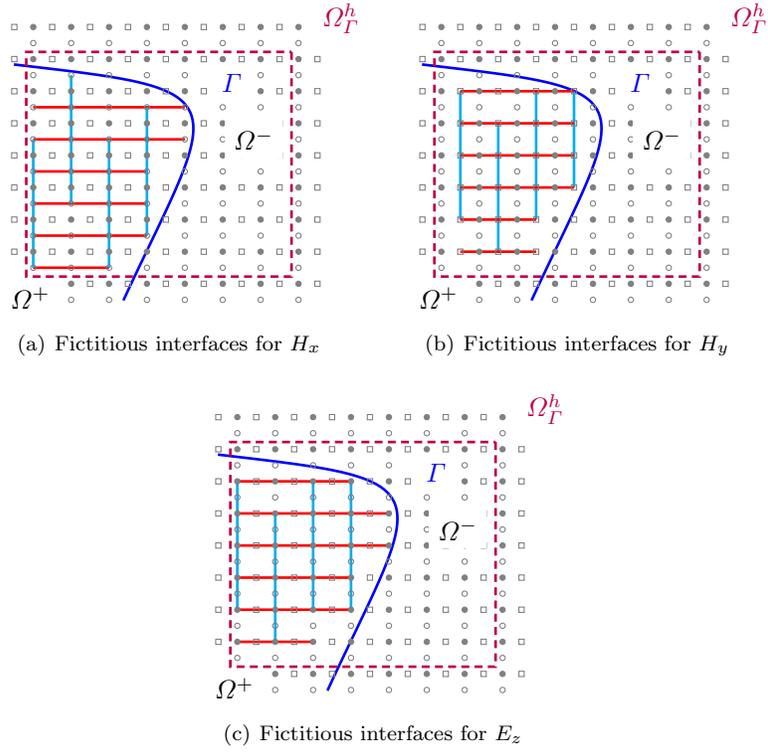

 \centering
  \subfigure[Fictitious interfaces for $H_x$]{\setlength\figureheight{0.3\linewidth} 
		\setlength\figurewidth{0.35\linewidth} 
		\tikzset{external/export next=false}
		\input{fictInterfForHx.tikz}}
  \subfigure[Fictitious interfaces for $H_y$]{\setlength\figureheight{0.3\linewidth} 
		\setlength\figurewidth{0.35\linewidth} 
		\tikzset{external/export next=false}
		\input{fictInterfForHy.tikz}}
  \subfigure[Fictitious interfaces for $E_z$]{\setlength\figureheight{0.3\linewidth} 
		\setlength\figurewidth{0.35\linewidth} 
		\tikzset{external/export next=false}
		\input{fictInterfForE.tikz}}	
  \caption{An example of a local patch $\Omega_{\Gamma}^h$ with fictitious interfaces. 
  		The $x$-component and the $y$-component of the magnetic field 
		are respectively represented by 
		{\color{gray}$\circ$} and {\color{gray}\tiny$\square$} while the $z$-component of the electric field is 
		represented by {\color{gray}$\bullet$}.
		Fictitious interfaces associated with $\mathbold{n}_1 = (1,0)$ and $\mathbold{n}_2=(0,1)$ are respectively
		represented by \textcolor{cyan}{\rule[0.35ex]{0.25cm}{1pt}} and \textcolor{red}{\rule[0.35ex]{0.25cm}{1pt}}.}
		 \label{fig:fictInterfExample}
\end{figure}

The functional \eqref{eq:functionalPEC} involves time integrals of 
	finite difference approximations $\mathbold{H}^*$ and $\mathbold{E}^*$ in 
	the vicinity of the boundary $\Gamma$.
Since the time interval associated with local patches is given by $I_{\Gamma}^h = [t_n-\Delta t_{\Gamma},  t_n]$,
	we can use previous computed finite difference solutions to construct 
	the space-time interpolant needed for fictitious interface conditions. 
However, 
	this makes difficult the initialization of a CFM-FDTD scheme that uses fictitious interface 
	conditions.
In Section~\ref{sec:Yee} and Section~\ref{sec:fourthOrderScheme}, 
	we propose an initialization strategy for the Yee scheme and a fourth-order FDTD scheme.
		
\subsection{Computation of local patches} \label{sec:compLocalPatch}
The computation of an appropriate local patch $\Omega_{\Gamma}^h$ is essential for the CFM.
The well-posedness of problem \eqref{eq:minPblm} and \eqref{eq:minPblmPEC} highly depends 
	on the representation of the embedded boundary 
	within the local patch $\Omega_{\Gamma}^{h}$. 
Hence, 
	an appropriate local patch is of foremost importance to obtain an accurate approximation
	of a correction function.
In previous CFM-FDTD schemes, 
	a \enquote{Node Centered} approach is used to compute local patches
		\cite{Marques2011,LawMarquesNave2020}. 
This approach consists to define a local patch and solve a minimization problem 
	for each node to be corrected.
Even though it is more expensive than other constructions 
	of local patches,   
	\enquote{Node Centered} approaches have the benefit to guarantee
	 the uniqueness of the correction function at a given node.
Hence, 
	a common discrete measure of the divergence for staggered FDTD schemes is conserved 
	for some nodes close to the embedded boundary \cite{LawMarquesNave2020}.	
	
In this work,
	as it is done for other immersed boundary methods \cite{Kallemov2016,Stein2016}, 
	we directly discretize the embedded boundary.  
Let us now summarize this approach.
For simplicity, 
	let us consider $\Omega \subset \mathbb{R}^2$.
Assume a given embedded boundary $\Gamma$ that can be parametrized with respect to the 
	parameter $s\in[s_a,s_b]$.  
The number of local patches is given by 
\begin{equation} \label{eq:nbLocPatch}
	N_s \approx \frac{L_\Gamma}{\alpha\,h} + 1,
\end{equation}
	where $L_\Gamma$ is the estimated arc length of $\Gamma$,
	$h$ is mesh grid size and $\alpha$ is positive constant.
In this work, 
	we use $\alpha = 2$.
Hence,
	centre points of local patches are given by $\mathbold{x}_{c,i} = (x(s_i),y(s_i))$, 
	where $s_i = s_a + i\,\Delta s$
 	for $i = 0,\dots,N_s-1$ and $\Delta s = \tfrac{s_b-s_a}{N_s-1}$.
For a given node to be corrected at $\mathbold{x}_D$,
	we find the closest $\mathbold{x}_{c,i}$ and associate the corresponding local patch to
	$\mathbold{x}_D$.
We therefore guarantee the uniqueness of the correction function to each node to be corrected
	while reducing the computational cost, 
	particularly for large stencils,
	when compared with \enquote{Node Centered} approaches.
The local patches are square and aligned with the computational grid. 
The length in space of local patches is $\ell_h = \beta\,h$, 
		where $h$ is the mesh grid size and $\beta$ is a positive constant. 
It is worth mentioning that the parameter $\beta$ is chosen in such a way 
	that enough fictitious interfaces 
	can be generated within $\Omega_{\Gamma}^h$ and that all nodes to be corrected are 
 	associated with a local patch.
	
As for the time interval $I_{\Gamma}^h = [t_n-\Delta t_{\Gamma},  t_n]$, 
	we choose $\Delta t_{\Gamma}$ in such a way that $I_{\Gamma}^h$ 
	include the number of time steps needed to construct space-time interpolants 
	associated with fictitious interface conditions. 
\section{Application of the CFM to the Yee Scheme} \label{sec:Yee}
In this section,
	we apply the CFM to the well-known Yee scheme \cite{Yee1966}, 
	which is a popular FDTD scheme in computational electromagnetics,
	with a particular attention on its initialization.
Let us recall that the Yee scheme uses a staggered grid in both space and time. 
We then need to adapt the functional \eqref{eq:functionalPEC}, 
	and more precisely 
	the interval of integration in time of fictitious interface conditions, 
	in order to consider a staggered grid in time.
Finally, 
	we conclude with pros and cons of such an approach.
In the following,
	we assume that the parameter $\beta$ has been chosen in such a way that enough fictitious interfaces have 
	been generated within $\Omega_{\Gamma}^h$ (see subsection~\ref{sec:compLocalPatch}) and 
	we therefore focus on the time component.

Let us first define a staggered grid in time. 
We consider a time interval $I = [0,T]$ subdivided into $N_t$ subintervals of length $\Delta t$. 
We then have $t_n := n\,\Delta t$ for $n = 0,\dots,N_t$ and $t_{n+1/2} := (n+1/2)\,\Delta t$ for $n = -1, \dots, N_t-1$.
The magnetic and electric field are respectively defined at $t_{n+1/2}$ and $t_n$.	

According to the Yee scheme, 
	we first compute $\mathbold{H}^{1/2}$ using 
	initial conditions $\mathbold{H}^{-1/2}$ and $\mathbold{E}^0$, 
		as illustrated in Fig.~\ref{fig:InitYeeScheme}. 
\begin{figure}[htbp]
 	\centering
 	\tdplotsetmaincoords{75}{105}
	\tikzset{external/export next=false}
  \begin{tikzpicture}[scale=0.75]
  	\draw[-latex,thick,black] (1,-5.8)--(8,-5.8);
	\draw[thick,black] (1,-5.7)--(1,-5.9);
	\draw[thick,black] (3,-5.7)--(3,-5.9);
	\draw[thick,black] (5,-5.7)--(5,-5.9);
	\draw[thick,black] (7,-5.7)--(7,-5.9);
  	\draw (1.45,-6.4) node {$t_{-1/2}$};
  	\draw (5.35,-6.4) node {$t_{1/2}$};
	\draw (3.1,-6.39) node {$t_0$};
	\draw (7.1,-6.39) node {$t_1$};
	\draw[thick,black] (1,0)--(3,0);
	\draw[dashed,thick,black] (3,0)--(5,0);
	\draw[-latex,thick,black,fill=black] (1,0) circle [radius=0.1];
	\draw[-latex,thick,black,fill=white] (3,0) circle [radius=0.1];
	\draw[-latex,thick,black,fill=black] (5,0) circle [radius=0.1];
  	\draw (1,1) node {\small$\mathbold{H}$, $\partial_t\mathbold{E}$};
  	\draw (3,1) node {\small$\mathbold{E}$, $\partial_t\mathbold{H}$};
	\draw[-latex,thick,black] (1,0.8)--(1,0.2);
	\draw[-latex,thick,black] (3,0.8)--(3,0.2);
	\draw[-latex,thick,black] (3,-1.45)--(3,-0.85);
  	\draw (3,-1.65) node {\small$\mathbold{D}_E$};
	\pattern[pattern=north east lines] (1,-0.5)--(3,-0.5)--(3,-0.75)--(1,-0.75)--cycle;
	\draw [black, thick] (1,-0.5)--(3,-0.5)--(3,-0.75)--(1,-0.75)--cycle;
	\draw[thick,black] (1,-3.5)--(5,-3.5);
	\draw[dashed,thick,black] (5,-3.5)--(7,-3.5);
	\draw[dashed,thick,black] (3,-3.5)--(5,-3.5);
	\draw[-latex,thick,black,fill=black] (1,-3.5) circle [radius=0.1];
	\draw[-latex,thick,black,fill=white] (3,-3.5) circle [radius=0.1];
	\draw[-latex,thick,black,fill=black] (5,-3.5) circle [radius=0.1];
	\draw[-latex,thick,black,fill=white] (7,-3.5) circle [radius=0.1];
  	\draw (1,-2.5) node {\small$\mathbold{H}$, $\partial_t\mathbold{E}$};
  	\draw (3,-2.45) node {\small$\mathbold{E}$};
  	\draw (5,-2.45) node {\small$\mathbold{H}$};
	\draw[-latex,thick,black] (1,-2.7)--(1,-3.3);
	\draw[-latex,thick,black] (3,-2.7)--(3,-3.3);
	\draw[-latex,thick,black] (5,-2.7)--(5,-3.3);
	\draw[-latex,thick,black] (5,-4.95)--(5,-4.35);
  	\draw (5,-5.15) node {\small$\mathbold{D}_H$};
	\pattern[pattern=north east lines] (1,-4)--(5,-4)--(5,-4.25)--(1,-4.25)--cycle;
	\draw [black, thick] (1,-4)--(5,-4)--(5,-4.25)--(1,-4.25)--cycle;
  \end{tikzpicture}
  \caption{Initialization strategy for the proposed CFM-Yee scheme. The black and white circle marker represent respectively the magnetic and electric field. The dashed box illustrates the time interval $I_{\Gamma}^h$ of local patches.}
   \label{fig:InitYeeScheme}
\end{figure}
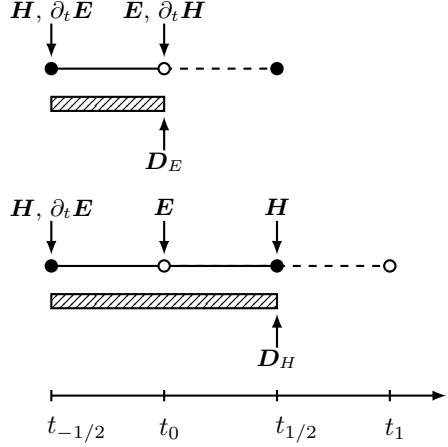
In this case, 
	the CFM-Yee scheme needs to provide corrections for the electric field at $t_0$, 
	that is $\mathbold{D}_E^0$.
The time interval of local patches is then $I^h_{\Gamma} = [t_{-1/2},t_0]$.
At first sight, 
	we do not have enough information in time within local patches to build accurate enough space-time interpolants 
		for fictitious interface conditions.
However, 
	by Faraday's law~\eqref{eq:Faraday} and Amp\`ere-Maxwell's law~\eqref{eq:AmpereMaxwell}, 
	we have $\partial_t \mathbold{H}^+ = -\mu^{-1}\,\nabla\times\mathbold{E}^+$ and 
	$\partial_t \mathbold{E}^+ = \epsilon^{-1}\,\nabla \times \mathbold{H}^+$ in $\Omega^+$. 
We can then compute the first-order time derivative of $\mathbold{H}$ at $t_0$ and $\mathbold{E}$ at $t_{-1/2}$ 
	using the curl of $\mathbold{E}^0$ and $\mathbold{H}^{-1/2}$. 
It is worth mentioning that one could estimate the curl operator using appropriate finite difference approximations.
First-degree polynomials in time can be constructed using $\mathbold{H}^{-1/2}$ and $\partial_t \mathbold{H}^0$, 
	and $\mathbold{E}^0$ and $\partial_t \mathbold{E}^{-1/2}$.
Hence, 
 	the interval of integration in time associated with all fictitious interface conditions in functional \eqref{eq:functionalPEC} 
	is also $I_{\Gamma}^h$.

For the computation of $\mathbold{E}^1$, 
	one needs correction functions for the magnetic field at $t_{1/2}$, 
	that is $\mathbold{D}_H^{1/2}$, 
	as illustrated in Fig.~\ref{fig:InitYeeScheme}.
The time interval of local patches is then $I^h_{\Gamma} = [t_{-1/2},t_{1/2}]$.
In this situation, 
	we construct a first-degree polynomial in time for the electric field using again $\mathbold{E}^0$ 
	and $\partial_t \mathbold{E}^{-1/2}$.
As for the magnetic field, 
	we use $\mathbold{H}^{-1/2}$ and $\mathbold{H}^{1/2}$ to compute a first-degree polynomial in time. 
The interval of integration in time associated with fictitious interface conditions involving the magnetic field
	is then $[t_{-1/2}, t_{1/2}]$ while 
	the one associated with the electric field is $[t_{-1/2},t_0]$.

Once the initialization of the proposed CFM-Yee scheme is done, 
	we only have two cases to consider, 
	as illustrated in Fig.~\ref{fig:strategyYeeScheme}.
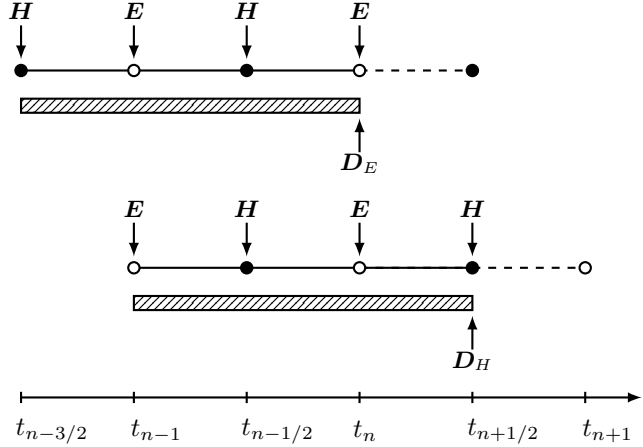
\begin{figure}[htbp]
 	\centering
 	\tdplotsetmaincoords{75}{105}
	\tikzset{external/export next=false}
  \begin{tikzpicture}[scale=0.75]
  	\draw[-latex,thick,black] (-3,-5.8)--(8,-5.8);
	\draw[thick,black] (-3,-5.7)--(-3,-5.9);
	\draw[thick,black] (-1,-5.7)--(-1,-5.9);
	\draw[thick,black] (1,-5.7)--(1,-5.9);
	\draw[thick,black] (3,-5.7)--(3,-5.9);
	\draw[thick,black] (5,-5.7)--(5,-5.9);
	\draw[thick,black] (7,-5.7)--(7,-5.9);
  	\draw (-2.45,-6.4) node {$t_{n-3/2}$};
  	\draw (-0.6,-6.39) node {$t_{n-1}$};
  	\draw (1.55,-6.4) node {$t_{n-1/2}$};
  	\draw (5.55,-6.4) node {$t_{n+1/2}$};
	\draw (3.1,-6.39) node {$t_n$};
	\draw (7.4,-6.39) node {$t_{n+1}$};
	\draw[thick,black] (-3,0)--(3,0);
	\draw[dashed,thick,black] (3,0)--(5,0);
	\draw[-latex,thick,black,fill=black] (-3,0) circle [radius=0.1];
	\draw[-latex,thick,black,fill=white] (-1,0) circle [radius=0.1];
	\draw[-latex,thick,black,fill=black] (1,0) circle [radius=0.1];
	\draw[-latex,thick,black,fill=white] (3,0) circle [radius=0.1];
	\draw[-latex,thick,black,fill=black] (5,0) circle [radius=0.1];
  	\draw (-3,1.05) node {\small$\mathbold{H}$};
  	\draw (-1,1.05) node {\small$\mathbold{E}$};
  	\draw (1,1.05) node {\small$\mathbold{H}$};
  	\draw (3,1.05) node {\small$\mathbold{E}$};
	\draw[-latex,thick,black] (-3,0.8)--(-3,0.2);
	\draw[-latex,thick,black] (-1,0.8)--(-1,0.2);
	\draw[-latex,thick,black] (1,0.8)--(1,0.2);
	\draw[-latex,thick,black] (3,0.8)--(3,0.2);
	\draw[-latex,thick,black] (3,-1.45)--(3,-0.85);
  	\draw (3,-1.65) node {\small$\mathbold{D}_E$};
	\pattern[pattern=north east lines] (-3,-0.5)--(3,-0.5)--(3,-0.75)--(-3,-0.75)--cycle;
	\draw [black, thick] (-3,-0.5)--(3,-0.5)--(3,-0.75)--(-3,-0.75)--cycle;
	\draw[thick,black] (-1,-3.5)--(5,-3.5);
	\draw[dashed,thick,black] (5,-3.5)--(7,-3.5);
	\draw[dashed,thick,black] (3,-3.5)--(5,-3.5);
	\draw[-latex,thick,black,fill=white] (-1,-3.5) circle [radius=0.1];
	\draw[-latex,thick,black,fill=black] (1,-3.5) circle [radius=0.1];
	\draw[-latex,thick,black,fill=white] (3,-3.5) circle [radius=0.1];
	\draw[-latex,thick,black,fill=black] (5,-3.5) circle [radius=0.1];
	\draw[-latex,thick,black,fill=white] (7,-3.5) circle [radius=0.1];
  	\draw (-1,-2.45) node {\small$\mathbold{E}$};
  	\draw (1,-2.45) node {\small$\mathbold{H}$};
  	\draw (3,-2.45) node {\small$\mathbold{E}$};
  	\draw (5,-2.45) node {\small$\mathbold{H}$};
	\draw[-latex,thick,black] (-1,-2.7)--(-1,-3.3);
	\draw[-latex,thick,black] (1,-2.7)--(1,-3.3);
	\draw[-latex,thick,black] (3,-2.7)--(3,-3.3);
	\draw[-latex,thick,black] (5,-2.7)--(5,-3.3);
	\draw[-latex,thick,black] (5,-4.95)--(5,-4.35);
  	\draw (5,-5.15) node {\small$\mathbold{D}_H$};
	\pattern[pattern=north east lines] (-1,-4)--(5,-4)--(5,-4.25)--(-1,-4.25)--cycle;
	\draw [black, thick] (-1,-4)--(5,-4)--(5,-4.25)--(-1,-4.25)--cycle;
  \end{tikzpicture}
  \caption{Strategy for a CFM-Yee scheme. The black and white circle marker represent respectively the magnetic and electric field. The dashed box illustrates the time interval $I_{\Gamma}^h$ of local patches.}
   \label{fig:strategyYeeScheme}
\end{figure}
The first case involves the computation of $\mathbold{H}^{n+1/2}$ and therefore $\mathbold{D}_E^{n}$. 
The time interval of local patches is $I_{\Gamma}^h = [t_{n-3/2}, t_n]$.
Approximations of the magnetic field at $t_{n-3/2}$ and $t_{n-1/2}$ 
	are used to construct a first-degree polynomial interpolant in time.
This leads to an interval of integration in time associated with fictitious interface conditions involving the magnetic field of 
	$[t_{n-3/2}, t_{n-1/2}]$.
As for fictitious interface conditions of the electric field, 
	the interval of integration in time
	is $[t_{n-1}, t_{n}]$,
	and $\mathbold{E}^{n-1}$ and $\mathbold{E}^n$ are used to construct a first-degree polynomial in time.
The second case implies the computation of $\mathbold{E}^{n+1}$.
We then need to compute $\mathbold{D}_H^{n+1/2}$ and therefore $I_{\Gamma}^h = [t_{n-1}, t_{n+1/2}]$.
First-degree polynomial in time is constructed using $\mathbold{H}^{n-1/2}$ and $\mathbold{H}^{n+1/2}$,
	and $\mathbold{E}^{n-1}$ and $\mathbold{E}^n$.
This leads to intervals of integration in time of fictitious interfaces given by $[t_{n-1/2}, t_{n+1/2}]$ for the magnetic 
	field and $[t_{n-1}, t_n]$ for the electric field. 
	
Another avenue to initialize the proposed CFM-Yee scheme, 
	although it is very specific to some applications, 
	is to consider that $\mathbold{H}^+$ and $\mathbold{E}^+$ in the vicinity 
	of the embedded boundary remain unchanged for $t\leq t_0$.
Hence, 
	the numerical strategy described previously for Fig.~\ref{fig:strategyYeeScheme} can be directly used.
As an example, 
	this approach could be useful for scattering problems.
	
Finally, 
	the main disadvantage of this approach is the computation cost associated with 
	minimization problems of the CFM at each update of electromagnetic fields.
In fact, 
	the CFM consumes most of the computational time when compared with the finite-difference part.
However,    
	a parallel implementation of the computation of approximations of the correction function can be performed since 
	minimization problems associated with local patches are independent at a given time step. 
We refer to \cite{Abraham2017} for more details about the benefits of a parallel implementation of the CFM.
It is also worth mentioning that we do not have to keep whole previous solutions but only approximations 
	associated with fictitious interfaces.
Despite this drawback, 
	the proposed CFM-Yee scheme could achieve second-order convergence 
	for appropriate approximations of the correction function (see Proposition~\ref{lem:errorAnalysis})
	while treating various complex geometries of the embedded boundary without significantly 
	increasing the complexity of the numerical approach.
Moreover, 
	it can also be implemented as a black-box for existing softwares that use the Yee scheme.
	
\section{Application of the CFM on a Fourth-Order Staggered FDTD Scheme} \label{sec:fourthOrderScheme}

In this section, 
	we introduce a CFM-FDTD scheme based on a fourth-order staggered FDTD scheme. 
The staggered space and time grid are defined as in the Yee scheme. 
Spatial derivatives are estimated using the fourth-order centered approximation.
As for time derivatives, 
	many avenues can be chosen, 
	such as staggered Adams-Bashforth or staggered backward differentiation methods \cite{Ghrist2000}.
In this work, 
	we choose a fourth-order staggered free-parameter multistep method introduced in \cite{Ghrist2000}, 
	which has a maximum imaginary stability boundary close to the leapfrog method used in the Yee scheme.

In the following, 
	we first describe a fourth-order staggered free-parameter multistep method that is used to discretize time derivatives.
We assume that previous solutions needed for the initialization of the multistep method are given.
Afterward, 
	we introduce the associated CFM-FDTD scheme with a particular attention on the time component.
As in Section~\ref{sec:Yee},
 	we assume that the parameter $\beta$ has been chosen in such a way that 
	enough fictitious interfaces have been generated within $\Omega_{\Gamma}^h$.
	
Let us consider $\partial_t \mathbold{H} = \mathbold{f}_H(\mathbold{E})$ and 
	$\partial_t \mathbold{E} = \mathbold{f}_E(\mathbold{H})$.
The considered fourth-order free-parameter method is given by
\begin{equation} \label{eq:freeParamScheme}
\begin{aligned}
\mathbold{H}^{n+1/2} =&\,\, -\alpha_3\,\mathbold{H}^{n-1/2} - \alpha_2\,\mathbold{H}^{n-3/2} - \alpha_1\,\mathbold{H}^{n-5/2} -
				\alpha_0\,\mathbold{H}^{n-7/2} + \\
				&\,\, \Delta t \, \big(\beta_3\,\mathbold{f}_H(\mathbold{E}^n) + \beta_2\,\mathbold{f}_H(\mathbold{E}^{n-1}) 
				+ \beta_1\,\mathbold{f}_H(\mathbold{E}^{n-2})\big)\\
\mathbold{E}^{n+1}   =&\,\,  -\alpha_3\,\mathbold{E}^{n} - \alpha_2\,\mathbold{E}^{n-1} - \alpha_1\,\mathbold{E}^{n-2} -
				\alpha_0\,\mathbold{E}^{n-3} + \\
				&\,\, \Delta t \, \big(\beta_3\,\mathbold{f}_E(\mathbold{H}^{n+1/2}) + \beta_2\,\mathbold{f}_E(\mathbold{H}^{n-1/2}) 
				+ \beta_1\,\mathbold{f}_E(\mathbold{H}^{n-3/2})\big)
\end{aligned}
\end{equation}
where $\beta_1=t$, 
       $\beta_2 = s$,
       $\beta_3 = \frac{1}{22}\,s + \frac{12}{11}$,  
\begin{equation*}
\begin{aligned}
\alpha_0 =&\,\, -\frac{1}{22} - \frac{1}{528}\,s+\frac{1}{24}\,t, \\
\alpha_1 =&\,\, \frac{5}{22} + \frac{9}{176}\,s-\frac{9}{8}\,t, \\
\alpha_2 =&\,\, -\frac{9}{22} - \frac{201}{176}\,s+\frac{9}{8}\,t, \\
\alpha_3 =&\,\, -\frac{17}{22} + \frac{577}{528}\,s-\frac{1}{24}\,t,
\end{aligned}
\end{equation*}
	with $s=-1$ and $t=1.045$.

Let us now introduce the corresponding CFM-FDTD scheme. 
The time scheme \eqref{eq:freeParamScheme} needs to compute
	$\mathbold{f}_H(\mathbold{E})$ at $t_n$, $t_{n-1}$ and $t_{n-2}$, 
	and $\mathbold{f}_E(\mathbold{H})$ at $t_{n+1/2}$, $t_{n-1/2}$ and $t_{n-3/2}$.
Hence, 
	we need to keep previous corresponding correction functions.
For the computation of $\mathbold{H}^{n+1/2}$, 
	we set $I_{\Gamma}^h = [t_{n-7/2}, t_{n}]$ and compute $\mathbold{D}_E^{n}$.
It is worth mentioning that, 
	at the first update of the magnetic field to estimate $\mathbold{H}^{1/2}$,
	we compute the correction function $\mathbold{D}_E$ at $t_0$, 
	$t_{-1}$ and $t_{-2}$, 
	and $\mathbold{D}_H$ at $t_{-1/2}$ and $t_{-3/2}$. 
For the update of the electric field, 
	that is $\mathbold{E}^{n+1}$, 
	we set $I_{\Gamma}^h = [t_{n-3}, t_{n+1/2}]$ and compute $\mathbold{D}_H^{n+1/2}$.
As for the Yee scheme, 
	we need to adapt the interval of integration in time of fictitious interface conditions using
	a similar procedure as in Section~\ref{sec:Yee} for both cases.

\section{Numerical Examples} \label{sec:numEx}
In the following, 
	we name the CFM-FDTD scheme based on the Yee scheme as CFM-Yee scheme
	while the one based on a fourth-order staggered FDTD scheme 
	is named CFM-$4^{th}$ scheme.
The error of $\mathbold{U}=[H_x,H_y,E_z]^T$ at $t_n$ is computed using approximations and 
	analytic solutions of the magnetic field and the electric field at respectively $t_n-\tfrac{\Delta t}{2}$ and 
	$t_n$ because of the staggered grid in time.

\subsection{Transverse Magnetic Mode} \label{sec:TMz}
Let us consider the transverse magnetic (TM$_{\text{z}}$) mode. 
The unknowns are $H_x(x,y,t)$, 
	$H_y(x,y,t)$ and $E_z(x,y,t)$.
For a domain $\Omega \subset \mathbb{R}^2$ and constant physical parameters, 
	Maxwell's equations are then simplified to 
\begin{equation} \label{eq:TMzSyst}
	\begin{aligned}
	\mu\,\partial_t H_x + \partial_y E_z =&\,\, 0 \quad \text{in } \Omega \times I,\\
	\mu\,\partial_t H_y - \partial_x E_z  =&\,\, 0 \quad \text{in } \Omega \times I,\\
	\epsilon\,\partial_t E_z - \partial_x H_y + \partial_y H_x =&\,\, 0 \quad \text{in } \Omega \times I,\\
	\partial_x H_x + \partial_y H_y =&\,\, 0 \quad \text{in } \Omega \times I,\\
	\end{aligned}
\end{equation}
	with the associated boundary, embedded 
	boundary and initial conditions. 

\subsection{A $2$-D Staggered Space Discretization}
\label{sec:2D}
In this subsection, 
	we present a staggered grid in space.
Let us consider a rectangular domain $\Omega = [x_{\ell},x_r]\times[y_b, y_t] \subset \mathbb{R}^2$. 
The domain is divided in $N=N_x\,N_y$ square cells, 
	noted by $\Omega_{ij} = [x_{i-1/2},x_{i+1/2}]\times[y_{j-1/2},y_{j+1/2}]$ and centered at 
	$$(x_i,y_j) = (x_{\ell} + (i - \tfrac{1}{2})\,\Delta x , y_b + (j-\tfrac{1}{2})\,\Delta y)$$ for $i=1,\dots,N_x$ and $j=1,\dots,N_y$
	with $\Delta x := (x_r-x_{\ell})/N_x$ and $\Delta y := (y_t-y_b)/N_y$.
The $z$-component of the electric field is approximated at the center of the cell.
The $x$-component and $y$-component of the magnetic field are respectively approximated at 
	$$(x_i,y_{j+1/2}) = (x_{\ell}+(i-\tfrac{1}{2})\,\Delta x, y_b +j\,\Delta y)$$ 
	for $i = 1, \dots, N_x$ and for $j=0,\dots,N_y$, and 
	$$(x_{i+1/2},y_j) = (x_{\ell}+i\,\Delta x, y_b + (j-\tfrac{1}{2})\,\Delta y)$$
	for $i= 0, \dots, N_x$ and for $j=1,\dots,N_y$.
We use either the second or fourth order centered finite difference scheme in space.
As an example, 
	the $x$-derivative of $H_y$ is of the form 
\begin{equation*}
	\partial_x H_y(x_i,y_j,t_{n+1/2}) \approx \frac{H_{y,i+1/2,j}^{n+1/2}-H_{y,i-1/2,j}^{n+1/2}}{\Delta x}
\end{equation*}
	and
\begin{equation*} 
	\partial_x H_y(x_i,y_j,t_{n+1/2}) \approx \frac{H_{y,i-3/2,j}^{n+1/2}-27\,H_{y,i-1/2,j}^{n+1/2}+27\,H_{y,i+1/2,j}^{n+1/2}-H_{y,i-3/2,j}^{n+1/2}}{24\,\Delta x}
\end{equation*}
	for respectively the second and fourth order centered finite-difference.
Finally,
	as it is commonly used, 
	we impose $E_z=0$ and $H_x=H_y=0$ in PEC subdomains.

\subsection{Problems with an Analytic Solution} \label{sec:pblmsAnalyticSol}
In this subsection,
	we perform numerical examples with analytic solutions 
	to assess the impact of the penalization coefficient $c_f$ and 
	to verify the proposed numerical approach. 
The domain $\Omega$ is divided in two subdomains $\Omega^+$ and $\Omega^-$.
Periodic boundary conditions are used on all $\partial \Omega$.
We set $c_p = 1$.
The mesh grid size is such that $h=\Delta x = \Delta y$.
We use second and third degree polynomial approximations of the correction function for respectively 
	the CFM-Yee scheme and the CFM-$4^{th}$ scheme. 
Hence, 
 	this should lead to a second and third order convergence in $L^2$-norm 
	(see Proposition~\ref{lem:errorAnalysis}). 
We set $\ell_h = 7\,h$,
	and we construct $\mathbold{E}^*$ and $\mathbold{H}^*$ in $\Omega^+$ 
	using at least a 
	second degree interpolating polynomial in space for both schemes.

\subsubsection{Circular Cavity Problem} \label{sec:circularCavity}
Let us consider a holed PEC material.   
The domain is $\Omega = [-1.25,1.25]\times[-1.25,1.25]$. 
Since $\Omega^-$ is a PEC subdomain, 
	the embedded boundary $\Gamma$ then encloses subdomain $\Omega^+$. 
The embedded boundary is a circle centered at $(0,0)$ with unit radius. 
The physical parameters are $\epsilon = 1$ and $\mu = 1$. 
The time-step size is $\Delta t = \tfrac{h}{2}$ for both CFM-FDTD schemes.
In subdomain $\Omega^+$, 
	the solution in cylindrical coordinates is given by 
\begin{equation*}
	\begin{aligned}
		H_\rho^+(\rho,\phi,t) =&\,\, \frac{i}{\alpha_{i,j}\,\rho}\,J_i(\alpha_{i,j}\,\rho)\,\sin(i\,\phi)\,\sin(\alpha_{i,j}\,t), \\
		H_\phi^+(\rho,\phi,t) =&\,\,\frac{1}{2}\,\big(J_{i-1}(\alpha_{i,j}\,\rho)-J_{i+1}(\alpha_{i,j}\,\rho)\big)\,\cos(i\,\phi)\,\sin(\alpha_{i,j}\,t), \\
		E_z^+(\rho,\phi,t) =&\,\, J_i(\alpha_{i,j}\,\rho)\,\cos(i\,\phi)\,\cos(\alpha_{i,j}\,t),
	\end{aligned}
\end{equation*}
	where $\alpha_{i,j}$ is the $j$-th positive real root of the $i$-order Bessel function of first kind $J_i$. 
In this numerical example, 
	we choose $i=6$ and $j=2$.
	
Let us assess the impact of the penalization coefficient $c_f$ on the proposed CFM-FDTD schemes.
Fig.~\ref{fig:convPlotCircCavityInterface} illustrates convergence plots of $\mathbold{U}=[H_x,H_y,E_z]^T$ 
	in $L^2$-norm for both CFM-FDTD schemes
	using different values of $c_f$, 
	that is $\Delta t$, 
	$\tfrac{\Delta t}{2}$ and $\tfrac{\Delta t}{4}$. 
The time interval is $I = [0,0.5]$.
We observe a clear second-order convergence for the CFM-Yee scheme.
For the CFM-$4^{th}$ scheme, 
	a fourth-order convergence is observed, 
	which is better than expected. 
The obtained convergence orders are in agreement with the theory for all values of $c_f$.
However, 
	one can notice that the error slightly increases as the value of $c_f$ diminishes. 
Let us now perform long time simulations. 
Fig.~\ref{fig:longSimulationCavity} illustrates the evolution of the error of electromagnetic fields
	in $L^2$-norm 
	as a function of the number of periods for different values of $c_f$.
The time interval is $I=[0,10]$ and the mesh grid size is $h=\tfrac{1}{160}$.
Numerical results suggest that the CFM-Yee scheme is stable for the considered values of 
	the penalization coefficient $c_f$. 
However,
	the CFM-$4^{th}$ scheme seems more sensitive than the CFM-Yee scheme to the value of $c_f$.
Stability issues appear after a dozen of periods for the largest considered value of the penalization coefficient, 
	that is $c_f = \Delta t$. 
The penalization coefficient $c_f$ must be therefore chosen small enough to avoid stability issues 
	as the mesh grid size diminishes.
Based on these numerical results,  
	we choose $c_f = \Delta t$ and $c_f = \tfrac{\Delta t}{4}$ for respectively the CFM-Yee scheme and 
	the CFM-$4^{th}$ scheme to avoid any stability issues in all other numerical examples.
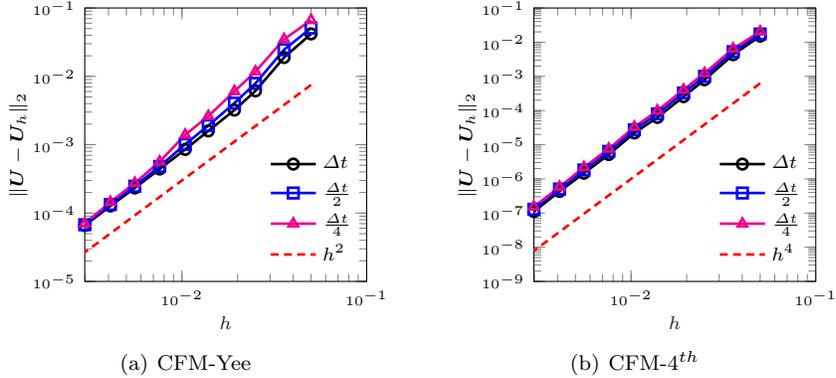
\begin{figure}
\begin{adjustbox}{max width=2.5\textwidth,center}
 \centering
  \subfigure[CFM-Yee]{
		\setlength\figureheight{0.3\linewidth} 
		\setlength\figurewidth{0.325\linewidth} 
		\tikzset{external/export next=false}
%
%
\begin{tikzpicture}

\begin{axis}[%
width=0.951\figurewidth,
height=\figureheight,
at={(0\figurewidth,0\figureheight)},
scale only axis,
xmode=log,
xmin=0.00297619047619048,
xmax=0.1,
xminorticks=true,
xlabel style={font=\color{white!15!black}},
xlabel={\scriptsize$h$},
ymode=log,
ymin=1e-05,
ymax=0.1,
yminorticks=true,
ytick = {1e-8,1e-7,1e-6,1e-5, 1e-4 ,1e-3, 1e-2, 1e-1,1},
ylabel={\scriptsize$\|\mathbold{U}-\mathbold{U}_h\|_{2}$},
axis background/.style={fill=white},
legend style={at={(0.625,0.5)},anchor=north west,legend cell align=left,align=left,draw=white!15!black,draw=none,fill=none},
legend style={font=\scriptsize},
ylabel style={yshift=-5pt},xlabel style={yshift=2.5pt},tick label style={font=\tiny} 
]
\addplot [color=black,line width=1pt,solid,mark=o,mark options={solid}]
  table[row sep=crcr]{%
0.05	0.0417830886647181\\
0.0357142857142857	0.018878788775868\\
0.025	0.00612843687760537\\
0.0192307692307692	0.00320678662285056\\
0.0138888888888889	0.00159055295965156\\
0.0104166666666667	0.000853238245548917\\
0.00757575757575758	0.000440977855381957\\
0.00555555555555556	0.000233001170595547\\
0.00409836065573771	0.000126987009188324\\
0.00297619047619048	6.57029940065244e-05\\
};
\addlegendentry{$\Delta t$}

\addplot [color=blue,line width=1pt,solid,mark=square,mark options={solid}]
  table[row sep=crcr]{%
0.05	0.0509442596986033\\
0.0357142857142857	0.0245448015723878\\
0.025	0.0078999604493456\\
0.0192307692307692	0.00406070840604229\\
0.0138888888888889	0.00190564387065083\\
0.0104166666666667	0.0010135151262299\\
0.00757575757575758	0.000481389030732908\\
0.00555555555555556	0.000247208637001538\\
0.00409836065573771	0.000133368526309654\\
0.00297619047619048	6.74423983653384e-05\\
};
\addlegendentry{$\tfrac{\Delta t}{2}$}

\addplot [color=magenta,line width=1pt,solid,mark=triangle,mark options={solid}]
  table[row sep=crcr]{%
0.05	0.0665382362383158\\
0.0357142857142857	0.034899587643591\\
0.025	0.0117356875230989\\
0.0192307692307692	0.00602646677776558\\
0.0138888888888889	0.00261250618615322\\
0.0104166666666667	0.00139534320041976\\
0.00757575757575758	0.000563609783072611\\
0.00555555555555556	0.000274730114211762\\
0.00409836065573771	0.000144896029832171\\
0.00297619047619048	7.02052383382268e-05\\
};
\addlegendentry{$\tfrac{\Delta t}{4}$}

\addplot [color=red,line width=1pt,densely dashed]
  table[row sep=crcr]{%
0.05	0.0075\\
0.0357142857142857	0.0038265306122449\\
0.025	0.001875\\
0.0192307692307692	0.0011094674556213\\
0.0138888888888889	0.000578703703703704\\
0.0104166666666667	0.000325520833333333\\
0.00757575757575758	0.000172176308539945\\
0.00555555555555556	9.25925925925926e-05\\
0.00409836065573771	5.03896801934964e-05\\
0.00297619047619048	2.65731292517007e-05\\
};
\addlegendentry{$h^2$}

\end{axis}

\end{tikzpicture}%
		}
  \subfigure[CFM-$4^{th}$]{
		\setlength\figureheight{0.3\linewidth} 
		\setlength\figurewidth{0.325\linewidth} 
		\tikzset{external/export next=false}
%
%
\begin{tikzpicture}

\begin{axis}[%
width=0.951\figurewidth,
height=\figureheight,
at={(0\figurewidth,0\figureheight)},
scale only axis,
xmode=log,
xmin=0.00297619047619048,
xmax=0.1,
xminorticks=true,
xlabel style={font=\color{white!15!black}},
xlabel={\scriptsize$h$},
ymode=log,
ymin=1e-09,
ymax=0.1,
yminorticks=true,
ytick = {1e-9,1e-8,1e-7,1e-6,1e-5, 1e-4 ,1e-3, 1e-2, 1e-1,1},
ylabel={\scriptsize$\|\mathbold{U}-\mathbold{U}_h\|_{2}$},
axis background/.style={fill=white},
legend style={at={(0.625,0.5)},anchor=north west,legend cell align=left,align=left,draw=white!15!black,draw=none,fill=none},
legend style={font=\scriptsize},
ylabel style={yshift=-5pt},xlabel style={yshift=2.5pt},tick label style={font=\tiny} 
]
\addplot [color=black,line width=1pt,solid,mark=o,mark options={solid}]
  table[row sep=crcr]{%
0.05	0.0149989545438022\\
0.0357142857142857	0.00430029635332894\\
0.025	0.000795097325440973\\
0.0192307692307692	0.000255248858152883\\
0.0138888888888889	6.28316066960807e-05\\
0.0104166666666667	2.20434675131876e-05\\
0.00757575757575758	5.17431597935875e-06\\
0.00555555555555556	1.46811900336065e-06\\
0.00409836065573771	4.30722685477289e-07\\
0.00297619047619048	1.10958586381605e-07\\
};
\addlegendentry{$\Delta t$}

\addplot [color=blue,line width=1pt,solid,mark=square,mark options={solid}]
  table[row sep=crcr]{%
0.05	0.0174016399467724\\
0.0357142857142857	0.00527386110206346\\
0.025	0.000999838718497958\\
0.0192307692307692	0.000325929190888266\\
0.0138888888888889	7.98245646335547e-05\\
0.0104166666666667	2.74037507244095e-05\\
0.00757575757575758	6.37047885174097e-06\\
0.00555555555555556	1.81930511080291e-06\\
0.00409836065573771	5.05016343911628e-07\\
0.00297619047619048	1.28507597588004e-07\\
};
\addlegendentry{$\tfrac{\Delta t}{2}$}

\addplot [color=magenta,line width=1pt,solid,mark=triangle,mark options={solid}]
  table[row sep=crcr]{%
0.05	0.0203590201522571\\
0.0357142857142857	0.00645383698223833\\
0.025	0.00123180376550425\\
0.0192307692307692	0.000406016961667216\\
0.0138888888888889	9.8812178087414e-05\\
0.0104166666666667	3.29343515101294e-05\\
0.00757575757575758	7.55883916148052e-06\\
0.00555555555555556	2.18612937058249e-06\\
0.00409836065573771	5.79585349001316e-07\\
0.00297619047619048	1.53641389574457e-07\\
};
\addlegendentry{$\tfrac{\Delta t}{4}$}

\addplot [color=red,line width=1pt,densely dashed]
  table[row sep=crcr]{%
0.05	0.000625\\
0.0357142857142857	0.000162692628071637\\
0.025	3.90625e-05\\
0.0192307692307692	1.36768670564756e-05\\
0.0138888888888889	3.72108862978204e-06\\
0.0104166666666667	1.17737569926698e-06\\
0.00757575757575758	3.29385346916026e-07\\
0.00555555555555556	9.52598689224204e-08\\
0.00409836065573771	2.82124430000316e-08\\
0.00297619047619048	7.84590220252878e-09\\
};
\addlegendentry{$h^4$}

\end{axis}

\end{tikzpicture}%
		}
\end{adjustbox}
  \caption{Convergence plots in $L^2$-norm for a circular cavity problem using the proposed CFM-FDTD schemes for different values of $c_f$, that is $\Delta t$, $\tfrac{\Delta t}{2}$ and $\tfrac{\Delta t}{4}$.}
   \label{fig:convPlotCircCavityInterface}
\end{figure}
\begin{figure}
 \centering
 \begin{adjustbox}{max width=2.5\textwidth,center}
		\setlength\figureheight{0.35\linewidth} 
		\setlength\figurewidth{0.85\linewidth} 
		\tikzset{external/export next=false}
    		\input{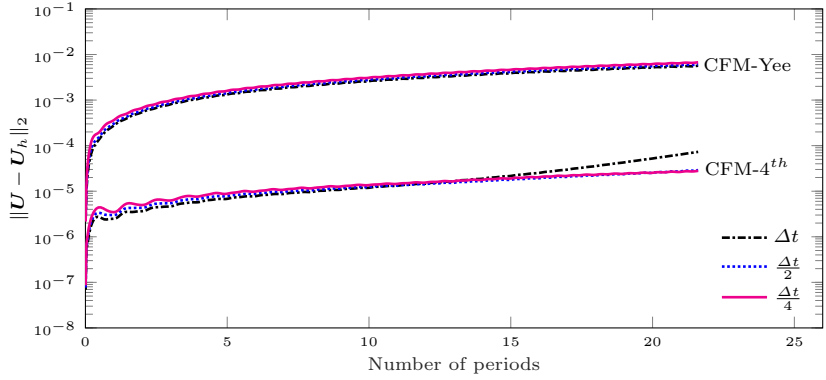}
\end{adjustbox}		
  \caption{Evolution in time of the error of $\mathbold{U}$ in $L^2$-norm for a circular cavity problem 
  using CFM-FDTD schemes with $h=\tfrac{1}{160}$ and different values of $c_f$, that is $\Delta t$, $\tfrac{\Delta t}{2}$ and $\tfrac{\Delta t}{4}$.}
   \label{fig:longSimulationCavity}
\end{figure}

\subsubsection{Square Cavity Problem} \label{sec:squareCavity}
Let us consider a PEC material with square holes.   
The domain is $\Omega = [-0.75,0.75]\times[-0.75,0.75]$ and the time interval is $I=[0,0.5]$.
Since $\Omega^-$ is a PEC subdomain, 
	the boundary $\Gamma$ then encloses subdomain $\Omega^+$. 
The boundary $\Gamma$ is a square of unit length centered at $(0,0)$ . 
The physical parameters are $\epsilon = 1$ and $\mu = 1$.
The time-step size is $\Delta t = \tfrac{h}{2}$ for both CFM-FDTD schemes.  
In $\Omega^+$, 
	the solution is given by 
\begin{equation*}
	\begin{aligned}
		H_x^+(x,y,t) =&\,\, -\frac{\pi\,n}{\omega}\,\sin(m\,\pi\,x)\,\cos(n\,\pi\,y)\,\sin(\omega\,t), \\
		H_y^+(x,y,t) =&\,\, \frac{\pi\,m}{\omega}\,\cos(m\,\pi\,x)\,\sin(n\,\pi\,y)\,\sin(\omega\,t), \\		
		E_z^+(x,y,t) =&\,\, \sin(m\,\pi\,x)\,\cos(n\,\pi\,y)\,\cos(\omega\,t),
	\end{aligned}
\end{equation*}
where $\omega = \pi\,\sqrt{m^2+n^2}$ with $m,n>0$ \cite{Hesthaven2008}.
In this numerical example, 
	we choose $m=n=4$. 
Convergence plots are illustrated in Fig.~\ref{fig:convPlotSquareCavityInterface} for both 
	proposed CFM-FDTD schemes.
A second and fourth order convergence are observed for respectively the CFM-Yee scheme and 
	the fourth-order CFM-FDTD scheme in $L^2$-norm.
The obtained convergence orders are in agreement with the theory.
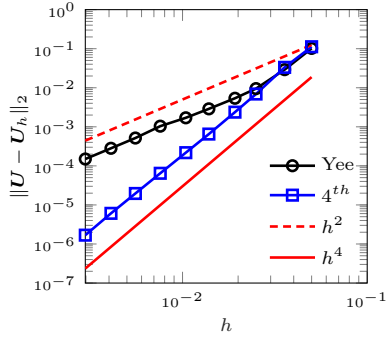
\begin{figure}
\begin{adjustbox}{max width=2.5\textwidth,center}
 \centering
		\setlength\figureheight{0.3\linewidth} 
		\setlength\figurewidth{0.325\linewidth} 
		\tikzset{external/export next=false}
%
%
\definecolor{mycolor1}{rgb}{1.00000,0.00000,1.00000}%
\begin{tikzpicture}

\begin{axis}[%
width=0.951\figurewidth,
height=\figureheight,
at={(0\figurewidth,0\figureheight)},
scale only axis,
xmode=log,
xmin=0.00297619047619048,
xmax=0.1,
xminorticks=true,
xlabel style={font=\color{white!15!black}},
xlabel={\scriptsize$h$},
ymode=log,
ymin=1e-07,
ymax=1,
yminorticks=true,
ytick = {1e-8,1e-7,1e-6,1e-5, 1e-4 ,1e-3, 1e-2, 1e-1,1},
ylabel={\scriptsize$\|\mathbold{U}-\mathbold{U}_h\|_{2}$},
axis background/.style={fill=white},
legend style={at={(0.62,0.5)},anchor=north west,legend cell align=left,align=left,draw=white!15!black,draw=none,fill=none},
legend style={font=\scriptsize},
ylabel style={yshift=-5pt},xlabel style={yshift=2.5pt},tick label style={font=\tiny} 
]
\addplot [color=black,line width=1pt,solid,mark=o,mark options={solid}]
  table[row sep=crcr]{%
0.05	0.101889433325655\\
0.0357142857142857	0.0290912298653827\\
0.025	0.00951069071949489\\
0.0192307692307692	0.00538190670556881\\
0.0138888888888889	0.00288479707645909\\
0.0104166666666667	0.0017010449540874\\
0.00757575757575758	0.00103811046874095\\
0.00555555555555556	0.000516183947720759\\
0.00409836065573771	0.000281869079001157\\
0.00297619047619048	0.000149853573387037\\
};
\addlegendentry{Yee}

\addplot [color=blue,line width=1pt,solid,mark=square,mark options={solid}]
  table[row sep=crcr]{%
0.05	0.112169332154982\\
0.0357142857142857	0.033308277194673\\
0.025	0.00691138397726731\\
0.0192307692307692	0.00237291915063484\\
0.0138888888888889	0.00065201321477437\\
0.0104166666666667	0.000217703172748734\\
0.00757575757575758	6.46051471842936e-05\\
0.00555555555555556	1.96445781495523e-05\\
0.00409836065573771	6.08909339231341e-06\\
0.00297619047619048	1.68287125427049e-06\\
};
\addlegendentry{$4^{th}$}

\addplot [color=red,line width=1pt,densely dashed]
  table[row sep=crcr]{%
0.05	0.125\\
0.0357142857142857	0.0637755102040816\\
0.025	0.03125\\
0.0192307692307692	0.018491124260355\\
0.0138888888888889	0.00964506172839506\\
0.0104166666666667	0.00542534722222222\\
0.00757575757575758	0.00286960514233242\\
0.00555555555555556	0.00154320987654321\\
0.00409836065573771	0.00083982800322494\\
0.00297619047619048	0.000442885487528345\\
};
\addlegendentry{$h^2$}

\addplot [color=red,line width=1pt,solid]
  table[row sep=crcr]{%
0.05	0.01875\\
0.0357142857142857	0.0048807788421491\\
0.025	0.001171875\\
0.0192307692307692	0.000410306011694268\\
0.0138888888888889	0.000111632658893461\\
0.0104166666666667	3.53212709780093e-05\\
0.00757575757575758	9.88156040748077e-06\\
0.00555555555555556	2.85779606767261e-06\\
0.00409836065573771	8.46373290000947e-07\\
0.00297619047619048	2.35377066075863e-07\\
};
\addlegendentry{$h^4$}

\end{axis}

\end{tikzpicture}%
\end{adjustbox}
  \caption{Convergence plots in $L^2$-norm for a square cavity problem using the proposed CFM-FDTD schemes.}
   \label{fig:convPlotSquareCavityInterface}
\end{figure}

\subsubsection{Two Concentric PEC Cylinders Problem} \label{sec:concentricPEC}
This problem considers a holed PEC containing a PEC cylinder 
	as illustrated in Figure~\ref{fig:concPEC}. 
It is recalled that $\Omega^-$ is a PEC subdomain.
We therefore have subdomain $\Omega^+$ enclosed by two PEC walls on $\Gamma_1$ and $\Gamma_2$.
There is two circular embedded boundaries centered at $(0,0)$ with radius $r_1=\tfrac{1}{3}$ and $r_2=1$ for respectively 
	$\Gamma_1$ and $\Gamma_2$. 
\begin{figure}[htbp]
 	\centering
 	\tdplotsetmaincoords{75}{105}
	\tikzset{external/export next=false}
  \begin{tikzpicture}[scale=0.75]
   	\draw[-latex,thick] (0,0)--(0,5)--(5,5)--(5,0)--cycle; 
	\draw[-latex,thick,blue] (2.5,2.5) circle [radius=2];
	\draw[-latex,thick,blue] (2.5,2.5) circle [radius=0.75];
  	\draw (3.25,3.3) node {$\color{blue}\Gamma_1$};
  	\draw (4.1,4.2) node {$\color{blue}\Gamma_2$};
  	\draw (0.75,4.5) node {$\Omega^-$};
   	\draw (1.6,3.5) node {$\Omega^+$};
   	\draw (2.5,2.5) node {$\Omega^-$};
  	\draw (0.5,0.3) node {$\partial \Omega$};
  \end{tikzpicture}
  \caption{Geometry of a two concentric PEC cylinders problem.}
\label{fig:concPEC}
\end{figure}
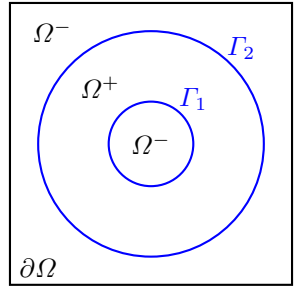
The domain is $\Omega = [-1.25,1.25]\times[-1.25,1.25]$ and the time interval is $I=[0,0.75]$.
The physical parameters are $\epsilon = \tfrac{1}{2}$ and $\mu = \tfrac{1}{2}$. 
The time-step size is $\Delta t = \tfrac{h}{4}$ for both CFM-FDTD schemes.  
In subdomain $\Omega^+$, 
	the solution in cylindrical coordinates is given by 
\begin{equation*}
	\begin{aligned}
		H_x^+(\rho,\phi,t) =&\,\, -\frac{1}{2}\,\sin(\omega\,t+\phi)\,\sin(\phi)\,\big(J_0(\tfrac{\omega\,\rho}{2})-J_2(\tfrac{\omega\,\rho}{2})+\alpha\,Y_0(\tfrac{\omega\,\rho}{2})-\alpha\,Y_2(\tfrac{\omega\,\rho}{2})\big)\\
		&\,\,-\frac{2\,\cos(\phi)}{\omega\,\rho}\,\cos(\omega\,t+\phi)\,\big(J_1(\tfrac{\omega\,\rho}{2})+\alpha\,Y_1(\tfrac{\omega\,\rho}{2})\big), \\
		H_y^+(\rho,\phi,t) =&\,\, \frac{1}{2}\,\sin(\omega\,t+\phi)\,\cos(\phi)\,\big(J_0(\tfrac{\omega\,\rho}{2})-J_2(\tfrac{\omega\,\rho}{2})+\alpha\,Y_0(\tfrac{\omega\,\rho}{2})-\alpha\,Y_2(\tfrac{\omega\,\rho}{2})\big)\\
		&\,\,-\frac{2\,\sin(\phi)}{\omega\,\rho}\,\cos(\omega\,t+\phi)\,\big(J_1(\tfrac{\omega\,\rho}{2})+\alpha\,Y_1(\tfrac{\omega\,\rho}{2})\big), \\		E_z^+(\rho,\phi,t) =&\,\, \cos(\omega\,t+\phi)\,\big(J_1(\tfrac{\omega\,\rho}{2})+\alpha\,Y_1(\tfrac{\omega\,\rho}{2})\big),
	\end{aligned}
\end{equation*}
where $\alpha \approx 1.76368380110927$, 
	$\omega \approx 9.813695999428405$, 
	and $J_i$ and $Y_i$ are the $i$-order Bessel function of respectively first and second kind \cite{Ditkowski2001}.	
Convergence plots are illustrated in Fig.~\ref{fig:convPlotTwoConCylInterface} for both 
	proposed CFM-FDTD schemes.
As expected, 
	we observe a second and third order convergence for respectively the CFM-Yee scheme and 
	the CFM-$4^{th}$ scheme in $L^2$-norm. 
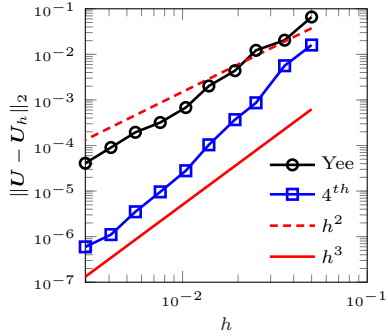
\begin{figure}
\begin{adjustbox}{max width=2.5\textwidth,center}
 \centering
		\setlength\figureheight{0.3\linewidth} 
		\setlength\figurewidth{0.325\linewidth} 
		\tikzset{external/export next=false}
%
%
\definecolor{mycolor1}{rgb}{1.00000,0.00000,1.00000}%
\begin{tikzpicture}

\begin{axis}[%
width=0.951\figurewidth,
height=\figureheight,
at={(0\figurewidth,0\figureheight)},
scale only axis,
xmode=log,
xmin=0.00297619047619048,
xmax=0.1,
xminorticks=true,
xlabel style={font=\color{white!15!black}},
xlabel={\scriptsize$h$},
ymode=log,
ymin=1e-07,
ymax=0.1,
yminorticks=true,
ytick = {1e-8,1e-7,1e-6,1e-5, 1e-4 ,1e-3, 1e-2, 1e-1,1},
ylabel={\scriptsize$\|\mathbold{U}-\mathbold{U}_h\|_{2}$},
axis background/.style={fill=white},
legend style={at={(0.62,0.5)},anchor=north west,legend cell align=left,align=left,draw=white!15!black,draw=none,fill=none},
legend style={font=\scriptsize},
ylabel style={yshift=-5pt},xlabel style={yshift=2.5pt},tick label style={font=\tiny} 
]
\addplot [color=black,line width=1pt,solid,mark=o,mark options={solid}]
  table[row sep=crcr]{%
0.05	0.0661289980983265\\
0.0357142857142857	0.0204395531746442\\
0.025	0.0122456902855226\\
0.0192307692307692	0.00437745234352367\\
0.0138888888888889	0.00202146896351383\\
0.0104166666666667	0.00068063782729368\\
0.00757575757575758	0.000319216281708078\\
0.00555555555555556	0.00019458516552045\\
0.00409836065573771	9.0314058389339e-05\\
0.00297619047619048	4.08007444672721e-05\\
};
\addlegendentry{Yee}

\addplot [color=blue,line width=1pt,solid,mark=square,mark options={solid}]
  table[row sep=crcr]{%
0.05	0.016024426340611\\
0.0357142857142857	0.00562698506964102\\
0.025	0.000864496044573546\\
0.0192307692307692	0.000369158572308801\\
0.0138888888888889	0.000103455936542195\\
0.0104166666666667	2.79376919941211e-05\\
0.00757575757575758	9.60308088891607e-06\\
0.00555555555555556	3.51275081224999e-06\\
0.00409836065573771	1.1081952194222e-06\\
0.00297619047619048	5.9540795681921e-07\\
};
\addlegendentry{$4^{th}$}

\addplot [color=red,line width=1pt,densely dashed]
  table[row sep=crcr]{%
0.05	0.0375\\
0.0357142857142857	0.0191326530612245\\
0.025	0.009375\\
0.0192307692307692	0.00554733727810651\\
0.0138888888888889	0.00289351851851852\\
0.0104166666666667	0.00162760416666667\\
0.00757575757575758	0.000860881542699725\\
0.00555555555555556	0.000462962962962963\\
0.00409836065573771	0.000251948400967482\\
0.00297619047619048	0.000132865646258503\\
};
\addlegendentry{$h^2$}

\addplot [color=red,line width=1pt,solid]
  table[row sep=crcr]{%
0.05	0.000625\\
0.0357142857142857	0.000227769679300292\\
0.025	7.8125e-05\\
0.0192307692307692	3.55598543468366e-05\\
0.0138888888888889	1.33959190672154e-05\\
0.0104166666666667	5.65140335648148e-06\\
0.00757575757575758	2.17394328964577e-06\\
0.00555555555555556	8.57338820301783e-07\\
0.00409836065573771	3.44191804600385e-07\\
0.00297619047619048	1.31811157002484e-07\\
};
\addlegendentry{$h^3$}

\end{axis}

\end{tikzpicture}%
\end{adjustbox}
  \caption{Convergence plots in $L^2$-norm for a two concentric PEC cylinders problem using the proposed CFM-FDTD schemes.}
   \label{fig:convPlotTwoConCylInterface}
\end{figure}

\subsection{Problems with a Manufactured Solution} \label{sec:scatteringPblms}
Let us now consider more complex embedded boundaries. 
To our knowledge, 
	there is no analytical solution for Maxwell's equations with arbitrary embedded boundaries.
We therefore consider that embedded boundary conditions are given by
\begin{equation*}
\begin{aligned}
	\hat{\mathbold{n}}\times\mathbold{E}^+=&\,\, \mathbold{a}(\mathbold{x},t) \quad \text{on } \Gamma \times I ,\\
	\hat{\mathbold{n}}\cdot (\mu\,\mathbold{H}^+) =&\,\, b(\mathbold{x},t) \quad \text{on } \Gamma \times I,
\end{aligned}
\end{equation*}
	where $\mathbold{a}(\mathbold{x},t)$ and $b(\mathbold{x},t)$ are known functions.
The physical parameters are $\epsilon = 1$ and $\mu = 2$. 
The time-step size is $\Delta t = \tfrac{h}{2}$ for both CFM-FDTD schemes.
We also consider the 5-star and 3-star embedded boundary illustrated in Fig.~\ref{fig:interfaceGeo}.
The solutions are 
\begin{equation*} 
	\begin{aligned}
		H_x^+ =&\,\, 0.5\,\sin(2\,\pi\,x)\,\sin(2\,\pi\,y)\,\sin(2\,\pi\,t), \\
		H_y^+ =&\,\, 0.5\,\cos(2\,\pi\,x)\,\cos(2\,\pi\,y)\,\sin(2\,\pi\,t), \\
		E_z^+ =&\,\, \sin(2\,\pi\,x)\,\cos(2\,\pi\,y)\,\cos(2\,\pi\,t) 
	\end{aligned}
\end{equation*}
	in $\Omega^+$ 
	while $H_x^-=H_y^-=0$ and $E_z^-=0$ in $\Omega^-$.
It is worth noting that manufactured solutions are at divergence-free in each subdomain, 
	but not in the whole domain because of embedded boundary conditions that we impose.
Nevertheless, 
	this allows us to assess the performance of the proposed CFM-FDTD schemes in a more 
	general framework.
The time interval is $I=[0,1]$. 
We set $\ell_h = 7\,h$ for both embedded boundaries.
The other parameters are the same as problems with analytical solutions in subsection~\ref{sec:pblmsAnalyticSol}.
Convergence plots for each geometry of the embedded boundary are illustrated in Fig.~\ref{fig:convPlotManuSol} using 
	both proposed CFM-FDTD schemes.
For the 5-star boundary, 
	a second-order convergence is observed for the CFM-Yee scheme.
We also observe a fourth order convergence for the CFM-$4^{th}$ scheme. 
As for the 3-star boundary, 
	we observe a second-order convergence for the CFM-Yee scheme
	while a third-order convergence is obtained for the CFM-$4^{th}$ scheme.
 \begin{figure} 
 \centering
\begin{adjustbox}{max width=2.5\textwidth,center}
  \subfigure[circular]{ \label{fig:circleInterfaceGeo}
		\setlength\figureheight{0.25\linewidth} 
		\setlength\figurewidth{0.25\linewidth} 
		\tikzset{external/export next=false}
%
%
\begin{tikzpicture}

\begin{axis}[%
width=\figurewidth,
height=\figureheight,
at={(0\figurewidth,0\figureheight)},
scale only axis,
xmin=0,
xmax=1,
xminorticks=true,
xtick = {0, 0.5 ,1},
ymin=0,
ymax=1,
yminorticks=true,
ytick = {0, 0.5 ,1},
axis background/.style={fill=white},
legend style={at={(0.01,0.99)},anchor=north west,legend cell align=left,align=left,draw=white!15!black,draw=none,fill=none},
legend style={font=\scriptsize},
ylabel style={yshift=-5pt},xlabel style={yshift=2.5pt},tick label style={font=\tiny} 
]
\addplot [color=black,line width=1pt,solid]
  table[row sep=crcr]{%
0.75	0.5\\
0.749496669117971	0.515855979914141\\
0.747988703207699	0.531648113393437\\
0.745482174315677	0.547312811090103\\
0.741987175349089	0.56278699679527\\
0.737517779435236	0.578008361424622\\
0.732091983254018	0.592915613915082\\
0.725731634571655	0.607448728022293\\
0.718462344267446	0.621549184025117\\
0.710313383207795	0.635160204363899\\
0.701317564382765	0.64822698226366\\
0.691511110779744	0.660696902421635\\
0.680933509526268	0.672519752870528\\
0.669627352889283	0.683647927164383\\
0.657638166771131	0.694036616072939\\
0.6450142273928	0.703643988012584\\
0.631806366902626	0.712431357487379\\
0.618067768693171	0.720363340861895\\
0.603853753250472	0.72740799883863\\
0.589221555397968	0.733536965066277\\
0.574230093832069	0.738725560361018\\
0.558939733877357	0.742952892080885\\
0.543412044416733	0.746201938253052\\
0.527709549975253	0.748459616115314\\
0.511895478955936	0.749716834795752\\
0.496033509041298	0.749968531918469\\
0.480187510785803	0.749213693987986\\
0.464421290431679	0.747455360470233\\
0.448798332983702	0.744700611553695\\
0.433381546577491	0.740960539639986\\
0.418233009170645	0.736250204678667\\
0.403413718576718	0.730588573526145\\
0.388983346848556	0.723998443572834\\
0.375	0.71650635094611\\
0.361519984033472	0.708142463658693\\
0.348597578215583	0.698940460132708\\
0.336284816513679	0.688937393588565\\
0.32463127807342	0.678173542844716\\
0.313683887581061	0.666692250129073\\
0.303486726314303	0.654539746555151\\
0.294080854642542	0.641764965965693\\
0.285504146691256	0.628419347893352\\
0.277791137836269	0.614556630431853\\
0.270972885641983	0.600232633851653\\
0.265076844803523	0.585505035831417\\
0.260126756596376	0.570433139210357\\
0.256142553278648	0.555077633196635\\
0.253140277830901	0.539500348993337\\
0.251132019356729	0.523764010826046\\
0.250125864404204	0.507931983374517\\
0.250125864404204	0.492068016625483\\
0.251132019356729	0.476235989173954\\
0.253140277830901	0.460499651006662\\
0.256142553278648	0.444922366803365\\
0.260126756596376	0.429566860789643\\
0.265076844803523	0.414494964168583\\
0.270972885641983	0.399767366148347\\
0.277791137836269	0.385443369568148\\
0.285504146691256	0.371580652106648\\
0.294080854642542	0.358235034034307\\
0.303486726314303	0.345460253444849\\
0.313683887581061	0.333307749870927\\
0.32463127807342	0.321826457155284\\
0.336284816513679	0.311062606411435\\
0.348597578215583	0.301059539867292\\
0.361519984033472	0.291857536341307\\
0.375	0.28349364905389\\
0.388983346848557	0.276001556427166\\
0.403413718576718	0.269411426473855\\
0.418233009170645	0.263749795321333\\
0.433381546577491	0.259039460360015\\
0.448798332983702	0.255299388446305\\
0.464421290431679	0.252544639529767\\
0.480187510785803	0.250786306012014\\
0.496033509041298	0.250031468081531\\
0.511895478955936	0.250283165204248\\
0.527709549975253	0.251540383884686\\
0.543412044416733	0.253798061746948\\
0.558939733877357	0.257047107919115\\
0.574230093832069	0.261274439638982\\
0.589221555397968	0.266463034933723\\
0.603853753250472	0.27259200116137\\
0.61806776869317	0.279636659138104\\
0.631806366902626	0.287568642512621\\
0.645014227392799	0.296356011987416\\
0.657638166771131	0.305963383927061\\
0.669627352889283	0.316352072835617\\
0.680933509526267	0.327480247129472\\
0.691511110779744	0.339303097578365\\
0.701317564382765	0.35177301773634\\
0.710313383207795	0.364839795636101\\
0.718462344267446	0.378450815974883\\
0.725731634571655	0.392551271977707\\
0.732091983254018	0.407084386084918\\
0.737517779435236	0.421991638575378\\
0.741987175349089	0.43721300320473\\
0.745482174315677	0.452687188909897\\
0.747988703207699	0.468351886606563\\
0.749496669117971	0.484144020085859\\
0.75	0.5\\
};
\end{axis}
\end{tikzpicture}%
		} 
  \subfigure[5-star]{\label{fig:starInterfaceGeo}
		\setlength\figureheight{0.25\linewidth} 
		\setlength\figurewidth{0.25\linewidth} 
		\tikzset{external/export next=false}
%
%
\begin{tikzpicture}

\begin{axis}[%
width=\figurewidth,
height=\figureheight,
at={(0\figurewidth,0\figureheight)},
scale only axis,
xmin=0,
xmax=1,
xminorticks=true,
xtick = {0, 0.5 ,1},
ymin=0,
ymax=1,
yminorticks=true,
ytick = {0, 0.5 ,1},
axis background/.style={fill=white},
legend style={at={(0.01,0.99)},anchor=north west,legend cell align=left,align=left,draw=white!15!black,draw=none,fill=none},
legend style={font=\scriptsize},
ylabel style={yshift=-5pt},xlabel style={yshift=2.5pt},tick label style={font=\tiny} 
]
\addplot [color=black,line width=1pt,solid]
  table[row sep=crcr]{%
0.75	0.5\\
0.765066930189006	0.516845499123648\\
0.777395596897263	0.535400996867555\\
0.785474949486592	0.555020786717682\\
0.788201994577402	0.574778085589939\\
0.785015355939205	0.593608069890764\\
0.77595740609245	0.610476680159308\\
0.76165735877021	0.624549447537036\\
0.743238589707334	0.6353343167743\\
0.722163808645616	0.64277601035563\\
0.700040086323819	0.647286395096484\\
0.678410979268275	0.649704586901987\\
0.658564374561849	0.651190825763033\\
0.641382028781861	0.653067981559738\\
0.627250504584707	0.656632481897858\\
0.616044232300894	0.662961322323235\\
0.607181039780345	0.672742897877369\\
0.599740192449566	0.686156495288\\
0.592624237639838	0.702818789561018\\
0.584739992587695	0.721806497324522\\
0.57517168653353	0.74175374251308\\
0.563320871123122	0.761012185776997\\
0.54899298926918	0.777853049331026\\
0.532418651828539	0.790684106941049\\
0.514207511767961	0.798252376567682\\
0.495242705950108	0.799804996894593\\
0.47653268388508	0.795186358147882\\
0.459043571509212	0.784858217139645\\
0.443538105232274	0.769840045925559\\
0.430446197184106	0.751577768613709\\
0.41978749849487	0.731758802741356\\
0.411158596481512	0.712098573477648\\
0.403787839421047	0.694127399905181\\
0.396650635094611	0.67900635094611\\
0.388628899709452	0.667396393140633\\
0.378691488291971	0.659397523849207\\
0.366068930318577	0.654564693923155\\
0.350396159890447	0.651996581389543\\
0.331801144729265	0.650483204539873\\
0.310924802629974	0.64869037868741\\
0.28886729267282	0.645354240940172\\
0.267066301407991	0.639458144361026\\
0.247122803447367	0.63036725568928\\
0.230597541587669	0.617902712298803\\
0.218806015883607	0.602346240273091\\
0.212640420665489	0.584376385492237\\
0.212443602463266	0.564947476486013\\
0.217953180803277	0.545130682692334\\
0.228324437528831	0.525941870830668\\
0.24222977195585	0.508182636263719\\
0.258021956852558	0.492318669514685\\
0.273939601184627	0.478413849178577\\
0.288327374858526	0.466129984705659\\
0.299841504094031	0.454792210092742\\
0.307613092527262	0.443510107071522\\
0.311347673723439	0.431336168610256\\
0.311348229696296	0.417437444595497\\
0.308459472225172	0.401253994825574\\
0.303941991974521	0.382619448574323\\
0.299294416612264	0.361824309008787\\
0.296048649998632	0.339610885577108\\
0.295566630432858	0.317098704281728\\
0.298866396256393	0.295649495700111\\
0.30650070270878	0.276689906746026\\
0.318503668139195	0.261516603583791\\
0.334411068357493	0.251111465823247\\
0.353349364905389	0.24599364905389\\
0.374178854276066	0.246130512759513\\
0.395668840671924	0.250921426425358\\
0.416678519846419	0.259258393384021\\
0.436316895970877	0.269656689333738\\
0.454058560735131	0.280438822818169\\
0.469799009354145	0.289947496199178\\
0.483842337686526	0.296758970171911\\
0.496824312132489	0.299867933057655\\
0.50958344614391	0.298818706976178\\
0.523000448121967	0.293764874710422\\
0.537831099564285	0.285449172824922\\
0.554558596631592	0.275106401615227\\
0.573288501130608	0.264302621791043\\
0.593703118208241	0.254732567191969\\
0.615083268861105	0.248002791883759\\
0.636395344936775	0.245429813564209\\
0.656431694024907	0.247880182902612\\
0.673984222484705	0.255673346298067\\
0.688025828957554	0.268559249751979\\
0.697872676996705	0.285772127230971\\
0.703302644490686	0.306151320021977\\
0.704611242291214	0.328310782058717\\
0.702595042441711	0.350832430569164\\
0.698462957769975	0.372455601627831\\
0.693686098827558	0.392235948724066\\
0.689805910373101	0.40965199149245\\
0.688226560415586	0.424645452329144\\
0.690020202931268	0.437591347041521\\
0.695772356120776	0.449204091999399\\
0.705489399144761	0.460395164537477\\
0.718581809518134	0.47210477008068\\
0.733926408046936	0.485133539295366\\
0.75	0.5\\
};
\end{axis}
\end{tikzpicture}%
		} 
  \subfigure[3-star]{\label{fig:triStarInterfaceGeo}
		\setlength\figureheight{0.25\linewidth} 
		\setlength\figurewidth{0.25\linewidth} 
		\tikzset{external/export next=false}
%
%
\begin{tikzpicture}

\begin{axis}[%
width=\figurewidth,
height=\figureheight,
at={(0\figurewidth,0\figureheight)},
scale only axis,
xmin=0,
xmax=1,
xminorticks=true,
xtick = {0, 0.5 ,1},
ymin=0,
ymax=1,
yminorticks=true,
ytick = {0, 0.5 ,1},
axis background/.style={fill=white},
legend style={at={(0.01,0.99)},anchor=north west,legend cell align=left,align=left,draw=white!15!black,draw=none,fill=none},
legend style={font=\scriptsize},
ylabel style={yshift=-5pt},xlabel style={yshift=2.5pt},tick label style={font=\tiny} 
]
\addplot [color=black,line width=1pt,solid]
  table[row sep=crcr]{%
0.75	0.55\\
0.768383691155712	0.567056285485863\\
0.784855939371776	0.586353079609766\\
0.798569247672799	0.607544505835653\\
0.808783283651588	0.630118192268234\\
0.814908287960172	0.653425855534761\\
0.816539300787274	0.676723220029148\\
0.813479080096982	0.699216694779779\\
0.805748304479489	0.720113788035931\\
0.793582461693234	0.738673991711258\\
0.777415669648107	0.754256830101566\\
0.757852505596638	0.766363942344277\\
0.735629678650627	0.774672444790369\\
0.711570021888213	0.779057373755364\\
0.686531762311539	0.779601705541283\\
0.661356319017503	0.776593244583852\\
0.636817963590651	0.770508511213973\\
0.61357854671968	0.761984593953246\\
0.592150159874138	0.75178070405503\\
0.57286808220072	0.740731832587438\\
0.555875694011952	0.729697420209933\\
0.541122262361211	0.719508274169579\\
0.528373671098689	0.710915085057808\\
0.517235331010121	0.704541799856712\\
0.507185718903124	0.700846803803962\\
0.497618308349223	0.700094370975715\\
0.48788911327206	0.702338193667119\\
0.477366703462387	0.707418035147949\\
0.465481391646248	0.714969718104235\\
0.4517703451712	0.724447815239603\\
0.435915638275791	0.735159603166056\\
0.417772716391469	0.74630812733257\\
0.397387362750012	0.757041649900452\\
0.375	0.76650635094611\\
0.351036977897187	0.773898951759353\\
0.326089339866257	0.778515940109955\\
0.300880372261704	0.779796300355625\\
0.276223968364629	0.777355071772135\\
0.252976437718672	0.771005649468394\\
0.231984821811756	0.760769465763369\\
0.214035011603472	0.746872499529074\\
0.199802985787138	0.729728904825291\\
0.189812299298019	0.709912874076284\\
0.184400561411509	0.68812060227235\\
0.183697076668585	0.66512484910402\\
0.18761311595587	0.641725065201822\\
0.195845484459602	0.61869632675674\\
0.207893209093245	0.596740392003832\\
0.223086334355692	0.576442049038377\\
0.240625045742688	0.558233576556572\\
0.259626683065719	0.542369609807538\\
0.279177704357766	0.528914027386286\\
0.298387346568558	0.517739694017157\\
0.316439622097695	0.50854106036347\\
0.332640397236881	0.500858786781107\\
0.34645661293846	0.494114777441185\\
0.357545209872456	0.487655334569043\\
0.365769976374519	0.480799613212578\\
0.371205307595373	0.472890209038588\\
0.374126697681612	0.463342567597689\\
0.374988630816851	0.451689972653066\\
0.374391337443451	0.437621149210248\\
0.37303858778221	0.421007986082703\\
0.371689260765654	0.401921513178496\\
0.37110581656491	0.380635019844539\\
0.372002990169758	0.357614024441967\\
0.375	0.33349364905389\\
0.380579330947101	0.309044762754784\\
0.389054720761967	0.285130980280279\\
0.400550380065498	0.262659193808722\\
0.414992747983782	0.242526735959632\\
0.432115274321156	0.225568494996845\\
0.45147587740097	0.212507314207483\\
0.472485908299546	0.203910805691148\\
0.494448709733373	0.200157307138778\\
0.516605239008747	0.201413134212458\\
0.538183768940384	0.207622567626085\\
0.558450417734776	0.218511208551704\\
0.576757205393503	0.233602490007808\\
0.592584493652186	0.252246299487896\\
0.605575028595216	0.273657902454885\\
0.615557346626805	0.29696470637777\\
0.622556990666661	0.321257912229455\\
0.6267947702146	0.345645796239216\\
0.628672135768096	0.369305268558684\\
0.628744571230722	0.391528473395405\\
0.627684683890353	0.411761519426597\\
0.626237340401908	0.429632939049313\\
0.625169715962851	0.444970137501007\\
0.625219459117422	0.457802865574245\\
0.627044304722356	0.468353582983459\\
0.631176384055404	0.477015419985697\\
0.637984189046328	0.484319238735193\\
0.647644665720762	0.490891992198984\\
0.660127270910301	0.497409132685517\\
0.67519106704659	0.504544198677694\\
0.692395100958555	0.512918883655448\\
0.711121467043621	0.523056852822891\\
0.730609647080229	0.535344325657581\\
0.75	0.55\\
};
\end{axis}
\end{tikzpicture}%
		} 
\end{adjustbox}
  \caption{Different geometries of an embedded PEC.}
  \label{fig:interfaceGeo}
\end{figure}
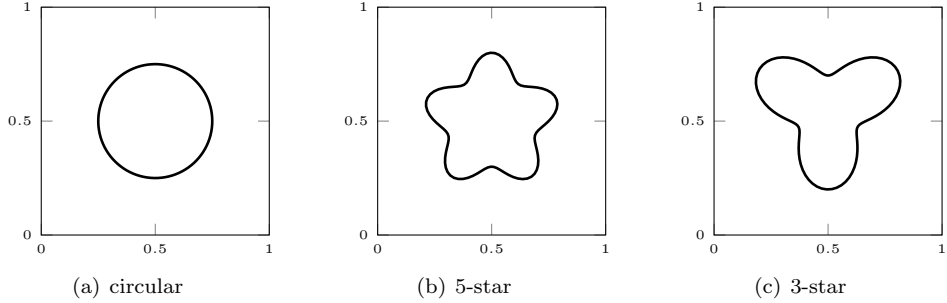
\begin{figure}
\begin{adjustbox}{max width=2.5\textwidth,center}
 \centering
  \subfigure[5-star]{
		\setlength\figureheight{0.3\linewidth} 
		\setlength\figurewidth{0.325\linewidth} 
		\tikzset{external/export next=false}
%
%
\definecolor{mycolor1}{rgb}{1.00000,0.00000,1.00000}%
\begin{tikzpicture}

\begin{axis}[%
width=0.951\figurewidth,
height=\figureheight,
at={(0\figurewidth,0\figureheight)},
scale only axis,
xmode=log,
xmin=0.00217391304347826,
xmax=0.1,
xminorticks=true,
xlabel style={font=\color{white!15!black}},
xlabel={\scriptsize$h$},
ymode=log,
ymin=1e-09,
ymax=0.1,
yminorticks=true,
ytick = {1e-8,1e-7,1e-6,1e-5, 1e-4 ,1e-3, 1e-2, 1e-1,1},
ylabel={\scriptsize$\|\mathbold{U}-\mathbold{U}_h\|_{2}$},
axis background/.style={fill=white},
legend style={at={(0.62,0.5)},anchor=north west,legend cell align=left,align=left,draw=white!15!black,draw=none,fill=none},
legend style={font=\scriptsize},
ylabel style={yshift=-5pt},xlabel style={yshift=2.5pt},tick label style={font=\tiny} 
]
\addplot [color=black,line width=1pt,solid,mark=o,mark options={solid}]
  table[row sep=crcr]{%
0.05	0.022338873305057\\
0.0357142857142857	0.0150064954701902\\
0.025	0.00330530455720858\\
0.0192307692307692	0.00127861118510874\\
0.0138888888888889	0.000607372536163846\\
0.0104166666666667	0.000339695901522835\\
0.00757575757575758	0.000194236046886483\\
0.00555555555555556	7.34003645093942e-05\\
0.00409836065573771	3.79060193301799e-05\\
0.00297619047619048	1.78540340792305e-05\\
0.00217391304347826	9.84032929900934e-06\\
};
\addlegendentry{Yee}

\addplot [color=blue,line width=1pt,solid,mark=square,mark options={solid}]
  table[row sep=crcr]{%
0.05	0.0216197198991167\\
0.0357142857142857	0.00137125493033694\\
0.025	0.000417758188580665\\
0.0192307692307692	9.60082320486095e-05\\
0.0138888888888889	3.10474923670031e-05\\
0.0104166666666667	1.76503100537608e-05\\
0.00757575757575758	2.97456355589466e-06\\
0.00555555555555556	4.67913752468585e-07\\
0.00409836065573771	1.78797247706408e-07\\
0.00297619047619048	5.3762367183194e-08\\
0.00217391304347826	2.19377827117848e-08\\
};
\addlegendentry{$4^{th}$}

\addplot [color=red,line width=1pt,densely dashed]
  table[row sep=crcr]{%
0.05	0.025\\
0.0357142857142857	0.0127551020408163\\
0.025	0.00625\\
0.0192307692307692	0.00369822485207101\\
0.0138888888888889	0.00192901234567901\\
0.0104166666666667	0.00108506944444444\\
0.00757575757575758	0.000573921028466483\\
0.00555555555555556	0.000308641975308642\\
0.00409836065573771	0.000167965600644988\\
0.00297619047619048	8.85770975056689e-05\\
0.00217391304347826	4.72589792060492e-05\\
};
\addlegendentry{$h^2$}

\addplot [color=red,line width=1pt,solid]
  table[row sep=crcr]{%
0.05	0.0009375\\
0.0357142857142857	0.000244038942107455\\
0.025	5.859375e-05\\
0.0192307692307692	2.05153005847134e-05\\
0.0138888888888889	5.58163294467307e-06\\
0.0104166666666667	1.76606354890046e-06\\
0.00757575757575758	4.94078020374038e-07\\
0.00555555555555556	1.42889803383631e-07\\
0.00409836065573771	4.23186645000474e-08\\
0.00297619047619048	1.17688533037932e-08\\
0.00217391304347826	3.350116673394747e-09\\
};
\addlegendentry{$h^4$}

\end{axis}

\end{tikzpicture}%
		}
  \subfigure[3-star]{
		\setlength\figureheight{0.3\linewidth} 
		\setlength\figurewidth{0.325\linewidth} 
		\tikzset{external/export next=false}
%
%
\definecolor{mycolor1}{rgb}{1.00000,0.00000,1.00000}%
\begin{tikzpicture}

\begin{axis}[%
width=0.951\figurewidth,
height=\figureheight,
at={(0\figurewidth,0\figureheight)},
scale only axis,
xmode=log,
xmin=0.00217391304347826,
xmax=0.1,
xminorticks=true,
xlabel style={font=\color{white!15!black}},
xlabel={\scriptsize$h$},
ymode=log,
ymin=1e-09,
ymax=0.1,
yminorticks=true,
ytick = {1e-9,1e-8,1e-7,1e-6,1e-5, 1e-4 ,1e-3, 1e-2, 1e-1,1},
ylabel={\scriptsize$\|\mathbold{U}-\mathbold{U}_h\|_{2}$},
axis background/.style={fill=white},
legend style={at={(0.62,0.5)},anchor=north west,legend cell align=left,align=left,draw=white!15!black,draw=none,fill=none},
legend style={font=\scriptsize},
ylabel style={yshift=-5pt},xlabel style={yshift=2.5pt},tick label style={font=\tiny} 
]
\addplot [color=black,line width=1pt,solid,mark=o,mark options={solid}]
  table[row sep=crcr]{%
0.05	0.0321539446483476\\
0.0357142857142857	0.00989285582168451\\
0.025	0.00406870942896455\\
0.0192307692307692	0.00185188935061162\\
0.0138888888888889	0.000479669390131065\\
0.0104166666666667	0.000299011000301145\\
0.00757575757575758	0.000121110105692554\\
0.00555555555555556	6.38356843408062e-05\\
0.00409836065573771	3.5464371502186e-05\\
0.00297619047619048	2.00484440095081e-05\\
0.00217391304347826	1.04476488045159e-05\\
};
\addlegendentry{Yee}

\addplot [color=blue,line width=1pt,solid,mark=square,mark options={solid}]
  table[row sep=crcr]{%
0.05	0.0094934656513759\\
0.0357142857142857	0.00175607060754307\\
0.025	0.000452719954365652\\
0.0192307692307692	7.5823962711877e-05\\
0.0138888888888889	4.94005908536265e-05\\
0.0104166666666667	2.85743061136718e-05\\
0.00757575757575758	1.65178488501363e-06\\
0.00555555555555556	6.50699419146073e-07\\
0.00409836065573771	2.17924306242816e-07\\
0.00297619047619048	8.66903823581193e-08\\
0.00217391304347826	2.72718910752209e-08\\
};
\addlegendentry{$4^{th}$}

\addplot [color=red,line width=1pt,densely dashed]
  table[row sep=crcr]{%
0.05	0.025\\
0.0357142857142857	0.0127551020408163\\
0.025	0.00625\\
0.0192307692307692	0.00369822485207101\\
0.0138888888888889	0.00192901234567901\\
0.0104166666666667	0.00108506944444444\\
0.00757575757575758	0.000573921028466483\\
0.00555555555555556	0.000308641975308642\\
0.00409836065573771	0.000167965600644988\\
0.00297619047619048	8.85770975056689e-05\\
0.00217391304347826	4.72589792060492e-05\\
};
\addlegendentry{$h^2$}


\addplot [color=red,line width=1pt,solid]
  table[row sep=crcr]{%
0.05	6.25e-05\\
0.0357142857142857	2.27769679300292e-05\\
0.025	7.8125e-06\\
0.0192307692307692	3.55598543468366e-06\\
0.0138888888888889	1.33959190672154e-06\\
0.0104166666666667	5.65140335648148e-07\\
0.00757575757575758	2.17394328964577e-07\\
0.00555555555555556	8.57338820301783e-08\\
0.00409836065573771	3.44191804600385e-08\\
0.00297619047619048	1.31811157002484e-08\\
0.00217391304347826	5.13684556587491e-09\\
};
\addlegendentry{$h^3$}
\end{axis}

\end{tikzpicture}%
		}
\end{adjustbox}
  \caption{Convergence plots in $L^2$-norm for problems with a manufactured solution using the proposed CFM-FDTD schemes and different geometries of the embedded boundary.}
   \label{fig:convPlotManuSol}
\end{figure}
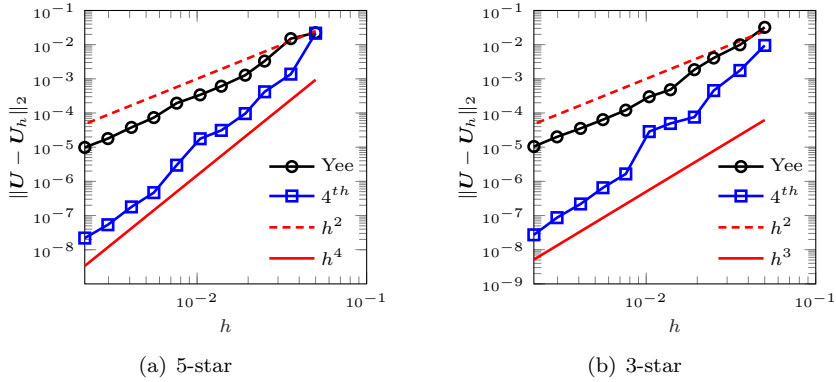
\subsection{Scattering Problems} \label{sec:scatteringPblms}
Let us now consider scattering problems involving various geometries of a PEC.   
To our knowledge, 
	there is no analytic solution for these problems with arbitrary geometries of the embedded boundary.
Hence,
	we estimate errors using approximate solutions coming from a very fine grid. 
The domain is $\Omega = [-1,1.5]\times[-0.75,1.75]$ and the time interval is $I=[0,1.5]$.
Periodic conditions are used on all $\partial \Omega$.
We consider embedded boundaries illustrated in Fig.~\ref{fig:interfaceGeo}.
The physical parameters are $\epsilon = 1$ and $\mu = 1$.
The mesh grid size is $h=\Delta x = \Delta y$. 
The time-step size is $\Delta t = \tfrac{h}{2}$.
The parameters for local patches are 
	$\ell_h = 6\,h$ for the circular PEC, 
	and $\ell_h = 7\,h$ for the 5-star PEC and 3-star PEC.
The other parameters are the same as problems with analytical solutions in subsection~\ref{sec:pblmsAnalyticSol}.

Let us consider a pulsed wave propagating in the positive $x$-direction, 
	given by
\begin{equation} \label{eq:pulsedWave}
	\begin{aligned}
		H_{x_{p}}(\mathbold{x},t) =&\,\, 0, \\
		H_{y_p}(\mathbold{x},t) =&\,\,-\frac{2}{\sigma^2}\,(x-\gamma-t)\,e^{-\big(\frac{x-\gamma-t}{\sigma}\big)^2}, \\
		E_{z_p}(\mathbold{x},t) =&\,\, \frac{2}{\sigma^2}\,(x-\gamma-t)\,e^{-\big(\frac{x-\gamma-t}{\sigma}\big)^2},
	\end{aligned}
\end{equation}
	where $\sigma = 0.1$ and $\gamma = -0.3$.
It is worth mentioning that we use electromagnetic fields given in \eqref{eq:pulsedWave} to compute all 
	previous solutions needed to initialize the time-stepping method presented in Section~\ref{sec:fourthOrderScheme}. 
It is recalled that we set $H_x^- = H_y^- =0$ and $E_z^-=0$ in the PEC subdomain, 
	that is $\Omega^-$.
The reference solution $\mathbold{U}^\star$
	is computed using $h=\tfrac{1}{1620}$ and the CFM-$4^{th}$ scheme.
All nodes used for $H_x$, 
	$H_y$ and $E_z$ in coarser grids with 
	$h\in\{\tfrac{1}{20},\tfrac{1}{60},\tfrac{1}{180},\tfrac{1}{540}\}$ are also part of the finer grid. 
Fig.~\ref{fig:convPlotScatteringPblms} illustrates convergence plots for each geometry of the embedded PEC using both CFM-FDTD schemes.
We observe a second and fourth order convergence for respectively the CFM-Yee scheme and 
	the CFM-$4^{th}$ scheme.
\begin{figure}
 \centering
  \subfigure[circular]{
		\setlength\figureheight{0.3\linewidth} 
		\setlength\figurewidth{0.325\linewidth} 
		\tikzset{external/export next=false}
%
%
\begin{tikzpicture}

\begin{axis}[%
width=0.951\figurewidth,
height=\figureheight,
at={(0\figurewidth,0\figureheight)},
scale only axis,
xmode=log,
xmin=0.001,
xmax=0.1,
xminorticks=true,
xlabel style={font=\color{white!15!black}},
xlabel={\scriptsize$h$},
ymode=log,
ymin=1e-06,
ymax=100,
yminorticks=true,
ytick = {1e-8,1e-7,1e-6,1e-5, 1e-4 ,1e-3, 1e-2, 1e-1,1,10,100,1000},
ylabel={\scriptsize$\|\mathbold{U}^\star-\mathbold{U}_h\|_{2}$},
axis background/.style={fill=white},
legend style={at={(0.62,0.5)},anchor=north west,legend cell align=left,align=left,draw=white!15!black,draw=none,fill=none},
legend style={font=\scriptsize},
ylabel style={yshift=-5pt},xlabel style={yshift=2.5pt},tick label style={font=\tiny} 
]
\addplot [color=black,line width=1pt,solid,mark=o,mark options={solid}]
  table[row sep=crcr]{%
0.05	6.66782119261563\\
0.0166666666666667	1.02340994612651\\
0.00555555555555556	0.113072547157417\\
0.00185185185185185	0.0125067195273335\\
};
\addlegendentry{Yee}

\addplot [color=blue,line width=1pt,solid,mark=square,mark options={solid}]
  table[row sep=crcr]{%
0.05	3.69104209587904\\
0.0166666666666667	0.0699891869554908\\
0.00555555555555556	0.000981919947682604\\
0.00185185185185185	1.48779378370159e-05\\
};
\addlegendentry{$4^{th}$}

\addplot [color=red,line width=1pt,densely dashed]
  table[row sep=crcr]{%
0.05	50\\
0.0166666666666667	5.55555555555556\\
0.00555555555555556	0.617283950617284\\
0.00185185185185185	0.0685871056241427\\
};
\addlegendentry{$h^2$}

\addplot [color=red,line width=1pt,solid]
  table[row sep=crcr]{%
0.05	0.75\\
0.0166666666666667	0.00925925925925926\\
0.00555555555555556	0.000114311842706904\\
0.00185185185185185	1.41125731736919e-06\\
};
\addlegendentry{$h^4$}

\end{axis}

\end{tikzpicture}%
		}
  \subfigure[5-star]{
		\setlength\figureheight{0.3\linewidth} 
		\setlength\figurewidth{0.325\linewidth} 
		\tikzset{external/export next=false}
%
%
\begin{tikzpicture}

\begin{axis}[%
width=0.951\figurewidth,
height=\figureheight,
at={(0\figurewidth,0\figureheight)},
scale only axis,
xmode=log,
xmin=0.001,
xmax=0.1,
xminorticks=true,
xlabel style={font=\color{white!15!black}},
xlabel={\scriptsize$h$},
ymode=log,
ymin=1e-06,
ymax=100,
yminorticks=true,
ytick = {1e-8,1e-7,1e-6,1e-5, 1e-4 ,1e-3, 1e-2, 1e-1,1,10,100,1000},
ylabel={\scriptsize$\|\mathbold{U}^\star-\mathbold{U}_h\|_{2}$},
axis background/.style={fill=white},
legend style={at={(0.62,0.5)},anchor=north west,legend cell align=left,align=left,draw=white!15!black,draw=none,fill=none},
legend style={font=\scriptsize},
ylabel style={yshift=-5pt},xlabel style={yshift=2.5pt},tick label style={font=\tiny} 
]
\addplot [color=black,line width=1pt,solid,mark=o,mark options={solid}]
  table[row sep=crcr]{%
0.05	7.89191458565988\\
0.0166666666666667	1.57265223271697\\
0.00555555555555556	0.120262026168658\\
0.00185185185185185	0.012550717571674\\
};
\addlegendentry{Yee}

\addplot [color=blue,line width=1pt,solid,mark=square,mark options={solid}]
  table[row sep=crcr]{%
0.05	5.72236605937382\\
0.0166666666666667	0.376006099674177\\
0.00555555555555556	0.0048716093586987\\
0.00185185185185185	3.40034762328334e-05\\
};
\addlegendentry{$4^{th}$}

\addplot [color=red,line width=1pt,densely dashed]
  table[row sep=crcr]{%
0.05	50\\
0.0166666666666667	5.55555555555556\\
0.00555555555555556	0.617283950617284\\
0.00185185185185185	0.0685871056241427\\
};
\addlegendentry{$h^2$}

\addplot [color=red,line width=1pt,solid]
  table[row sep=crcr]{%
0.05	3.125\\
0.0166666666666667	0.0385802469135802\\
0.00555555555555556	0.000476299344612102\\
0.00185185185185185	5.88023882237163e-06\\
};
\addlegendentry{$h^4$}

\end{axis}

\end{tikzpicture}%
		}
  \subfigure[3-star]{
		\setlength\figureheight{0.3\linewidth} 
		\setlength\figurewidth{0.325\linewidth} 
		\tikzset{external/export next=false}
%
%
\begin{tikzpicture}

\begin{axis}[%
width=0.951\figurewidth,
height=\figureheight,
at={(0\figurewidth,0\figureheight)},
scale only axis,
xmode=log,
xmin=0.001,
xmax=0.1,
xminorticks=true,
xlabel style={font=\color{white!15!black}},
xlabel={\scriptsize$h$},
ymode=log,
ymin=1e-06,
ymax=100,
yminorticks=true,
ytick = {1e-8,1e-7,1e-6,1e-5, 1e-4 ,1e-3, 1e-2, 1e-1,1,10,100,1000},
ylabel={\scriptsize$\|\mathbold{U}^\star-\mathbold{U}_h\|_{2}$},
axis background/.style={fill=white},
legend style={at={(0.62,0.5)},anchor=north west,legend cell align=left,align=left,draw=white!15!black,draw=none,fill=none},
legend style={font=\scriptsize},
ylabel style={yshift=-5pt},xlabel style={yshift=2.5pt},tick label style={font=\tiny} 
]
\addplot [color=black,line width=1pt,solid,mark=o,mark options={solid}]
  table[row sep=crcr]{%
0.05	7.79533813241798\\
0.0166666666666667	1.18136200023526\\
0.00555555555555556	0.118112140930244\\
0.00185185185185185	0.0124133134494936\\
};
\addlegendentry{Yee}

\addplot [color=blue,line width=1pt,solid,mark=square,mark options={solid}]
  table[row sep=crcr]{%
0.05	5.34554998542575\\
0.0166666666666667	0.150696971035038\\
0.00555555555555556	0.00197468590830428\\
0.00185185185185185	1.67403128345419e-05\\
};
\addlegendentry{$4^{th}$}

\addplot [color=red,line width=1pt,densely dashed]
  table[row sep=crcr]{%
0.05	50\\
0.0166666666666667	5.55555555555556\\
0.00555555555555556	0.617283950617284\\
0.00185185185185185	0.0685871056241427\\
};
\addlegendentry{$h^2$}

\addplot [color=red,line width=1pt,solid]
  table[row sep=crcr]{%
0.05	1.25\\
0.0166666666666667	0.0154320987654321\\
0.00555555555555556	0.000190519737844841\\
0.00185185185185185	2.35209552894865e-06\\
};
\addlegendentry{$h^4$}

\end{axis}

\end{tikzpicture}%
		}
  \caption{Convergence plots in $L^2$-norm for scattering problems using the proposed CFM-FDTD schemes and different geometries of the embedded PEC.}
   \label{fig:convPlotScatteringPblms}
\end{figure}
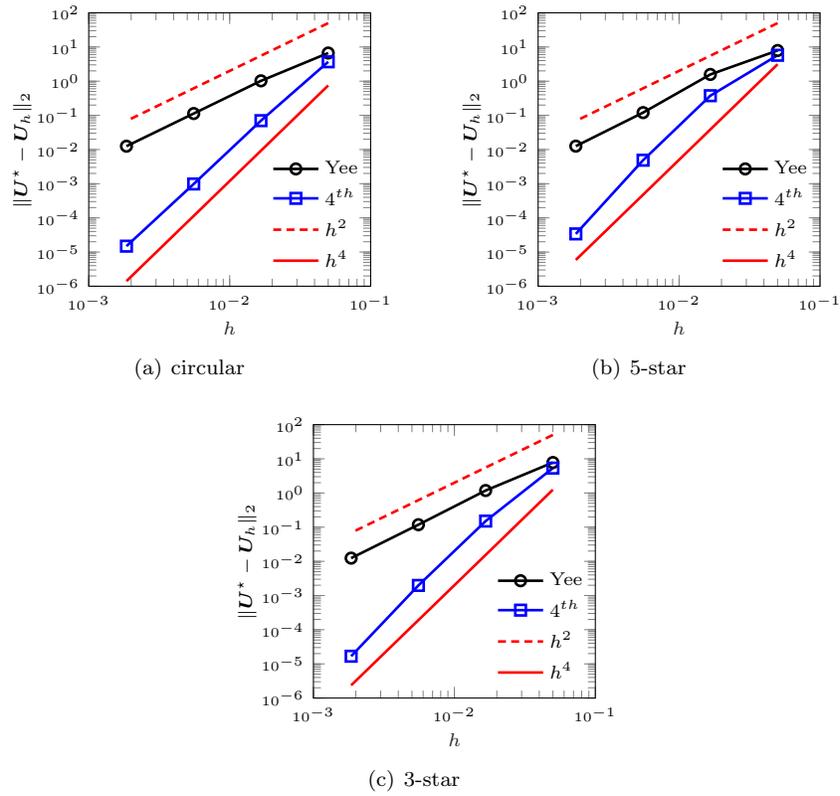
Fig.~\ref{fig:circularScattering}, 
	Fig.~\ref{fig:fiveStarScattering} and Fig.~\ref{fig:triStarScattering} illustrate the evolution of the magnitude of 
	each component of electromagnetic fields.
The numerical approach can handle various geometries of the embedded boundary without significantly increasing 
	the complexity of the method.
\begin{figure}
 \centering
\begin{adjustbox}{max width=1.25\textwidth,center}
\subfigure[$H_x$]{
 	{\includegraphics[width=2.25in]{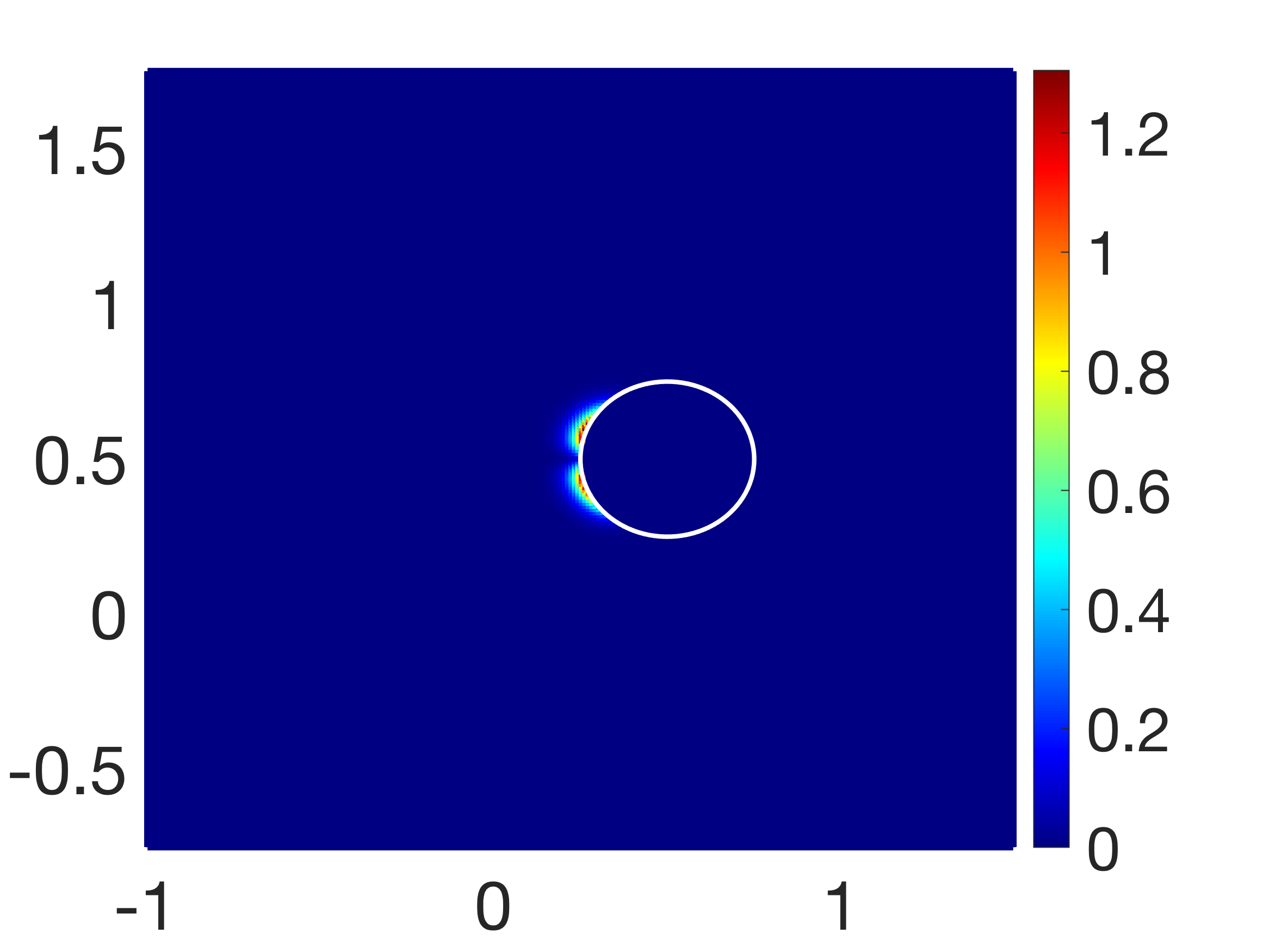}}
 	{\includegraphics[width=2.25in]{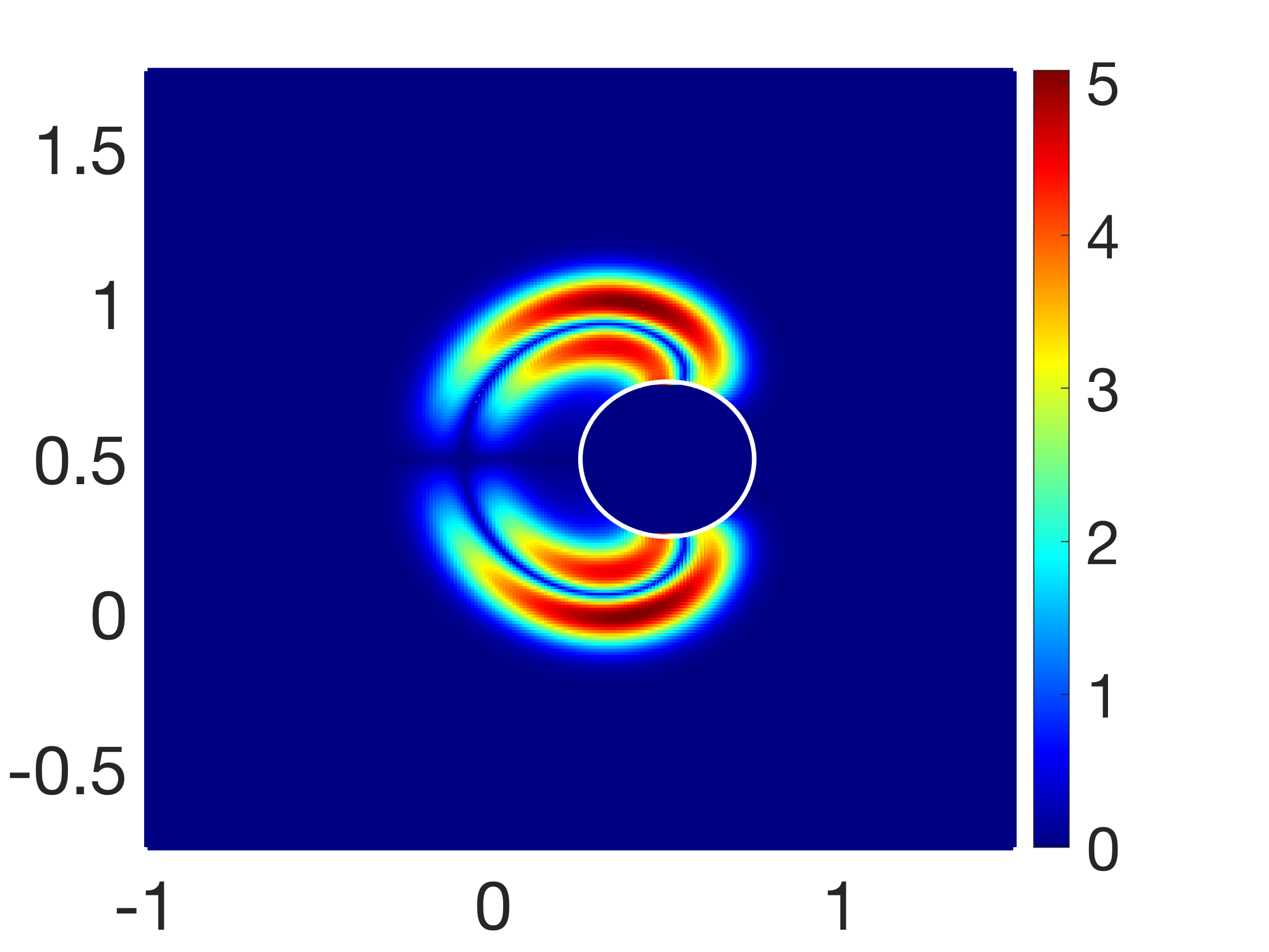}}
	{\includegraphics[width=2.25in]{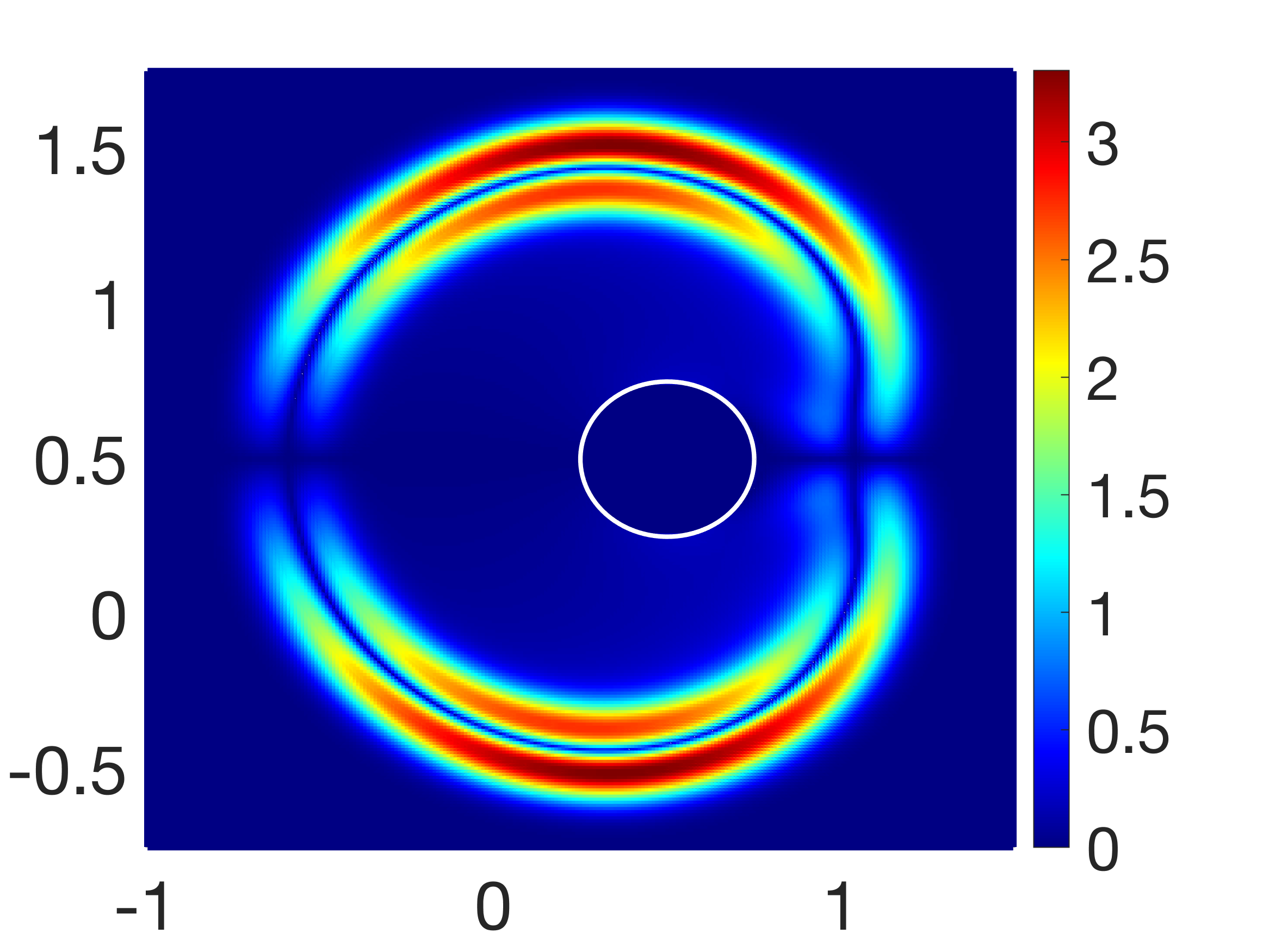}} 
			 }
\end{adjustbox}
\begin{adjustbox}{max width=1.25\textwidth,center}
\subfigure[$H_y$]{
 	{\includegraphics[width=2.25in]{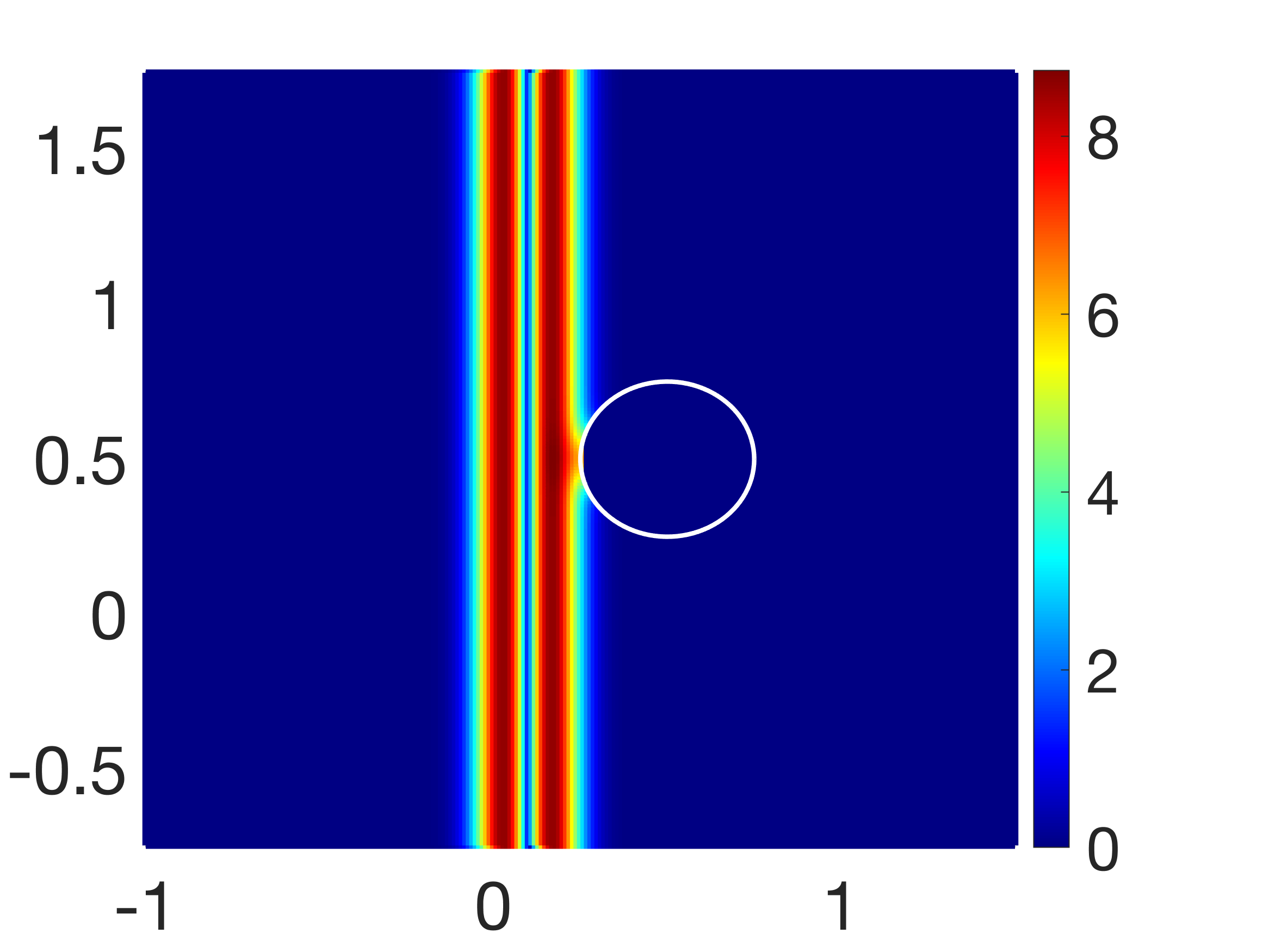}}
 	{\includegraphics[width=2.25in]{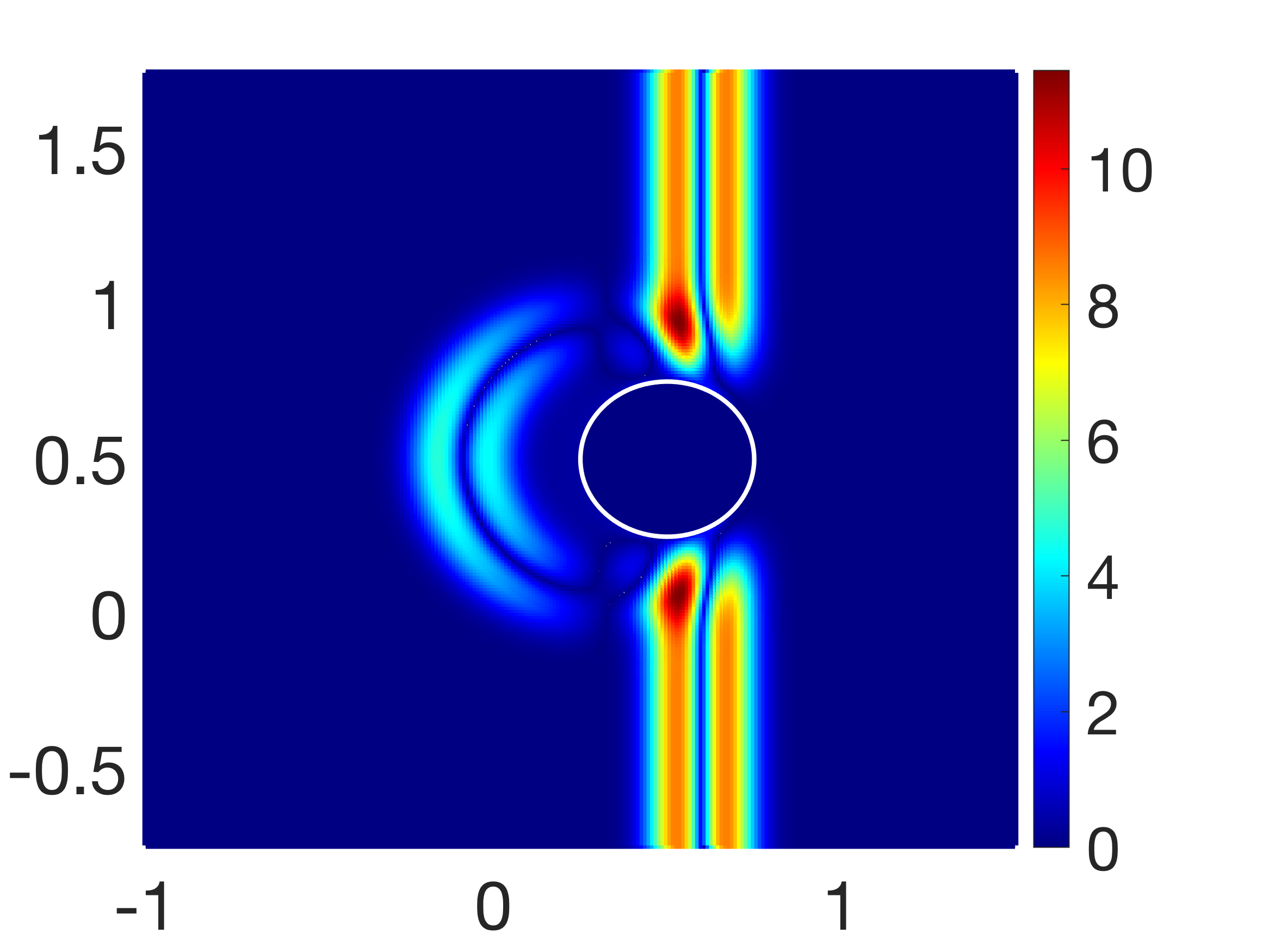}}
	{\includegraphics[width=2.25in]{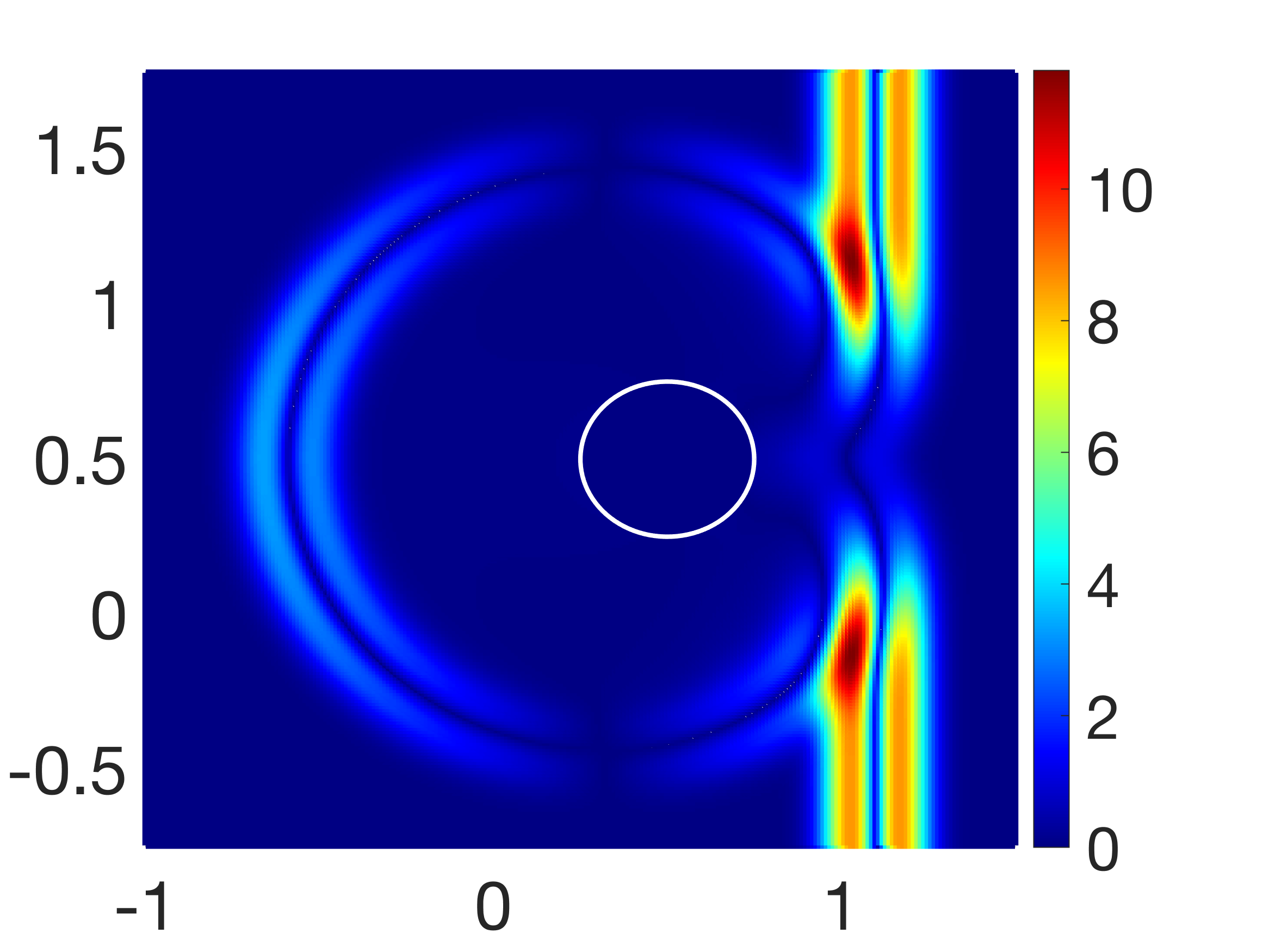}}  
			 }
\end{adjustbox}
\begin{adjustbox}{max width=1.25\textwidth,center}
\subfigure[$E_z$]{			 
	{\includegraphics[width=2.25in]{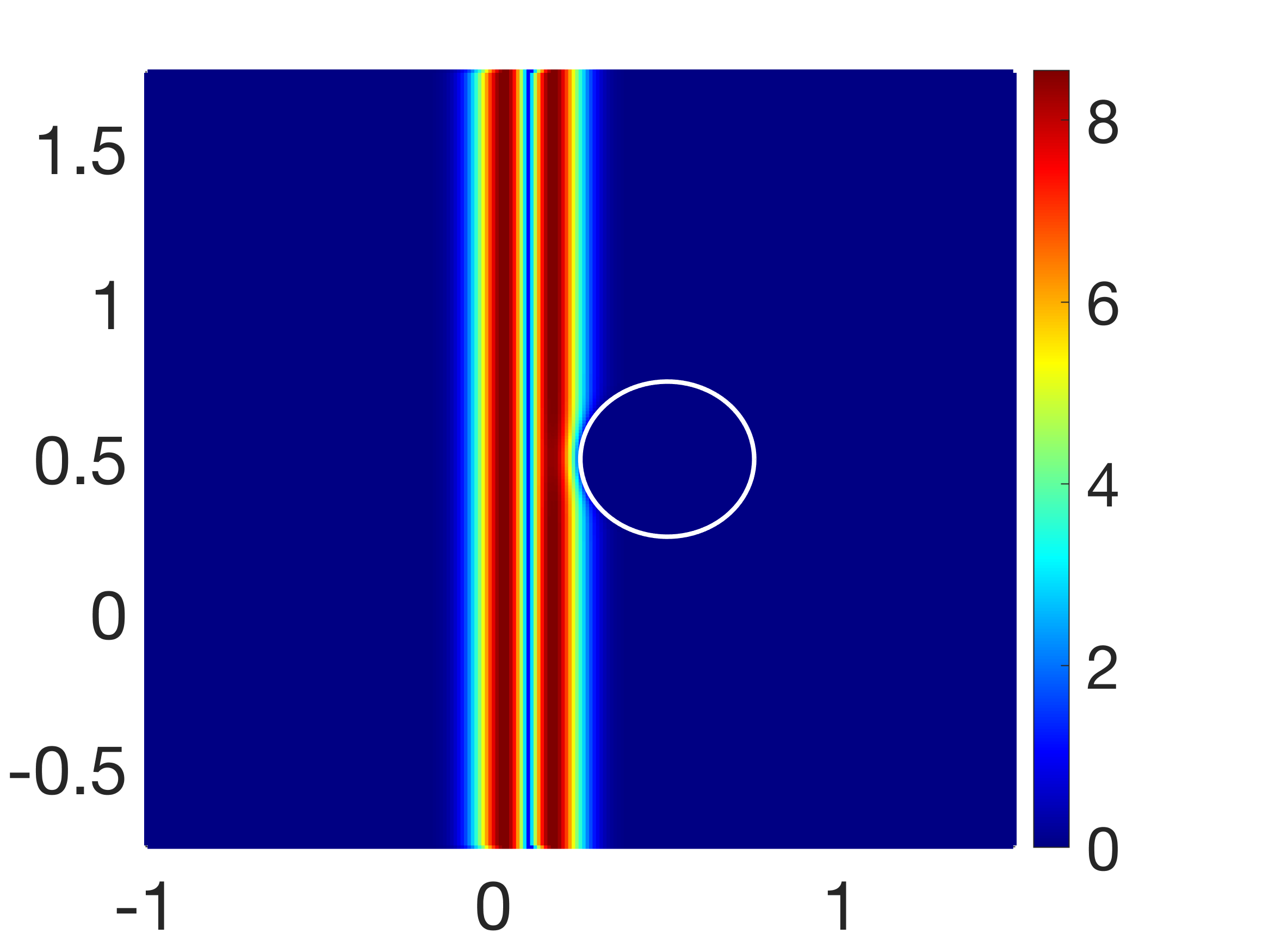}}
	{\includegraphics[width=2.25in]{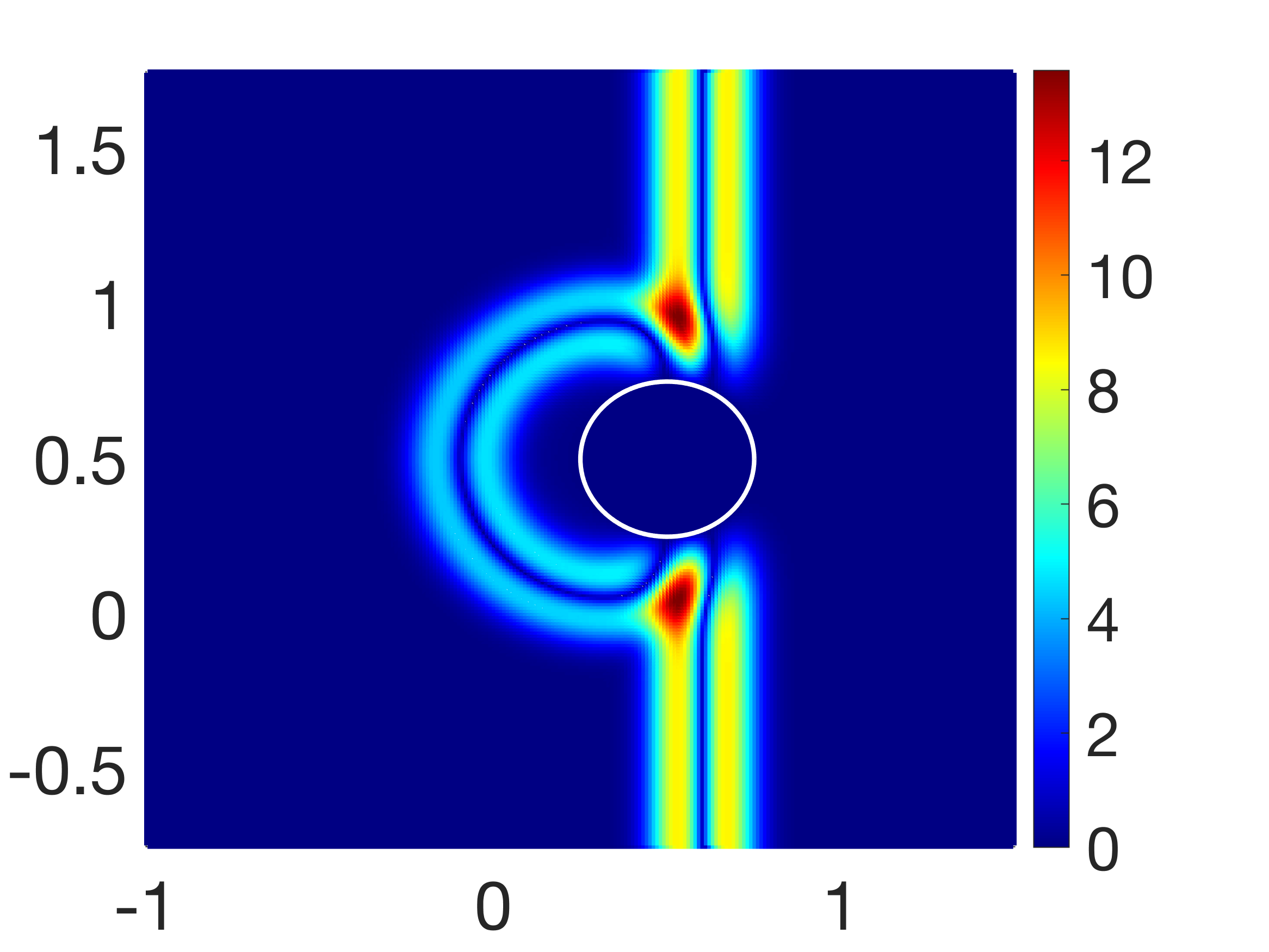}}
	{\includegraphics[width=2.25in]{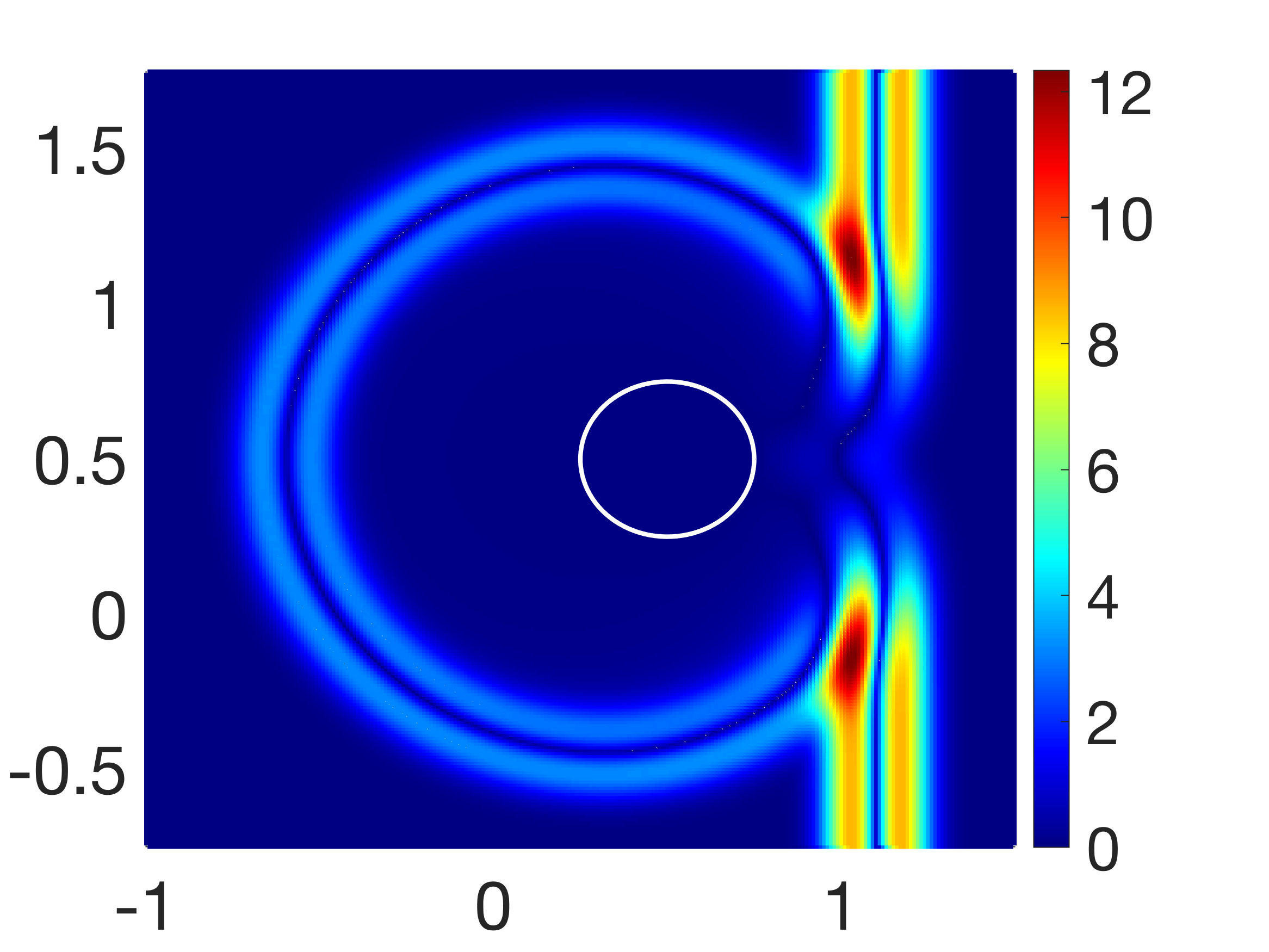}}
  }
\end{adjustbox}
  \caption{The evolution of the magnitude of components $H_x$, $H_y$ and $E_z$ with $h = \tfrac{1}{100}$ and $\Delta t = \tfrac{h}{2}$ using the CFM-$4^{th}$ scheme and the circular embedded PEC. From left to right, we show the computed electric field and 
  magnetic field at respectively $t \in \{ 0.4, 0.9, 1.4\}$ and $t-\tfrac{\Delta t}{2}$. The embedded boundary is represented by the white line.}
  \label{fig:circularScattering}  
\end{figure}
\begin{figure}
 \centering
\begin{adjustbox}{max width=1.25\textwidth,center}
\subfigure[$H_x$]{
 	{\includegraphics[width=2.25in]{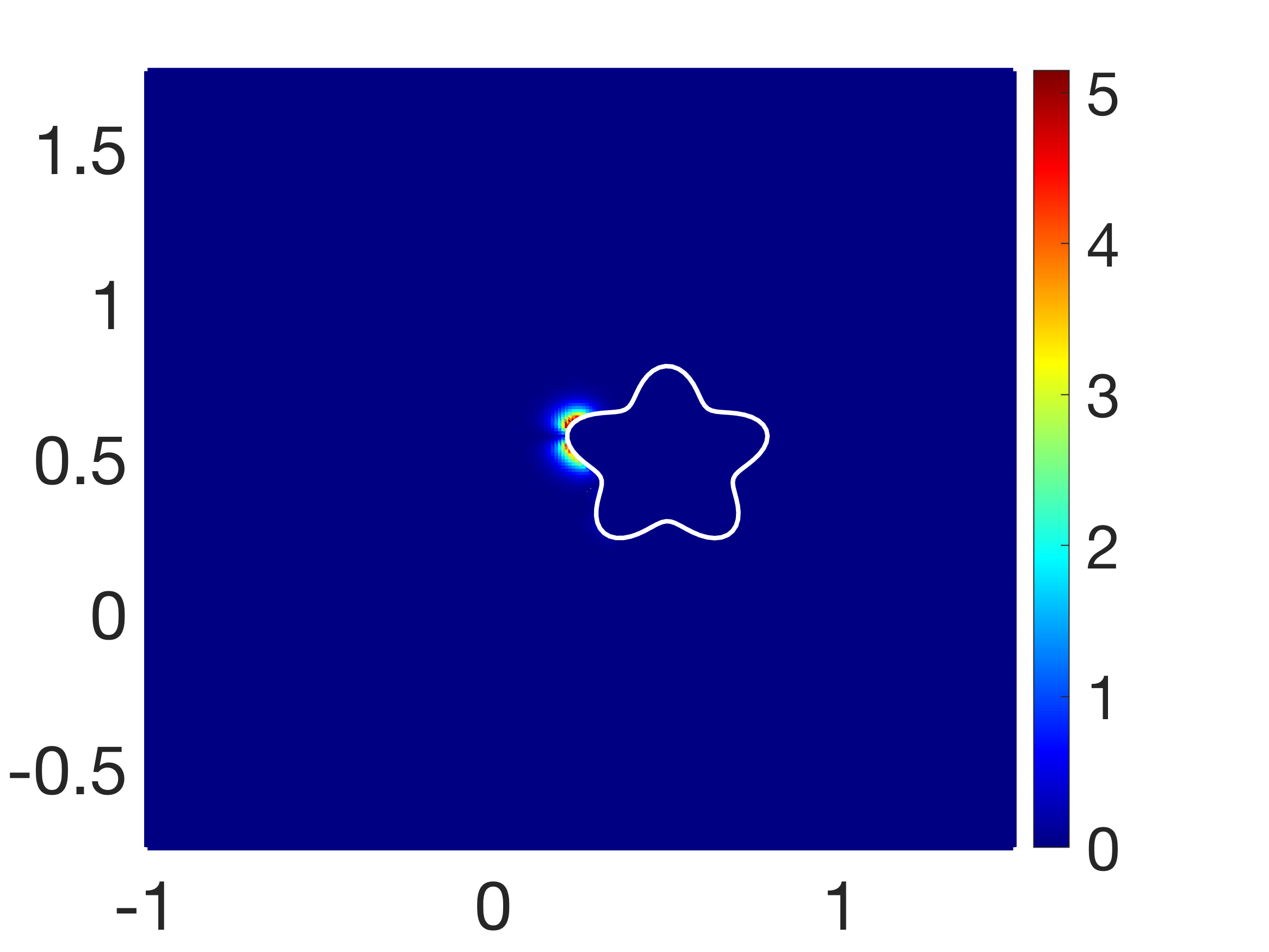}}
 	{\includegraphics[width=2.25in]{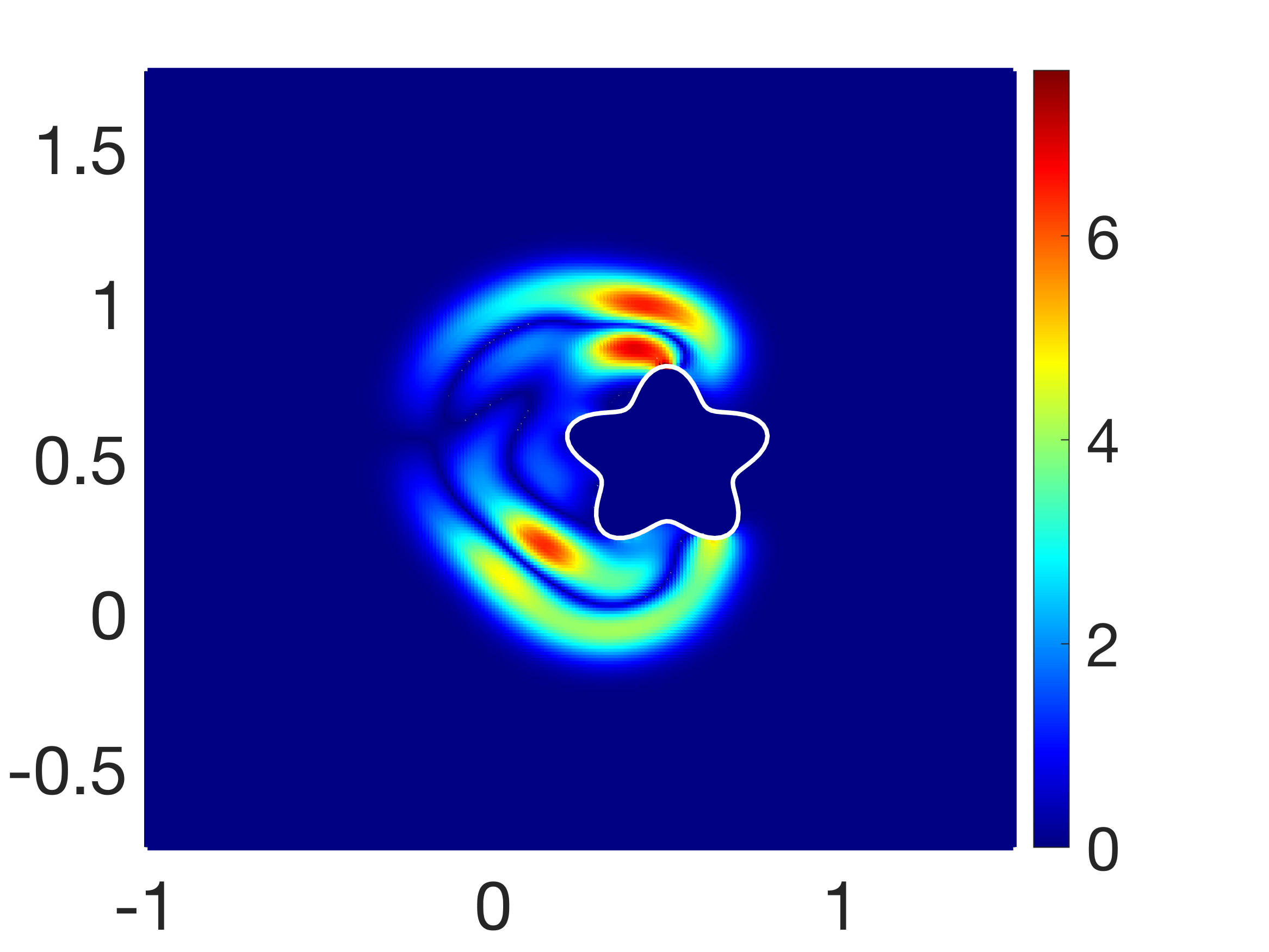}}
	{\includegraphics[width=2.25in]{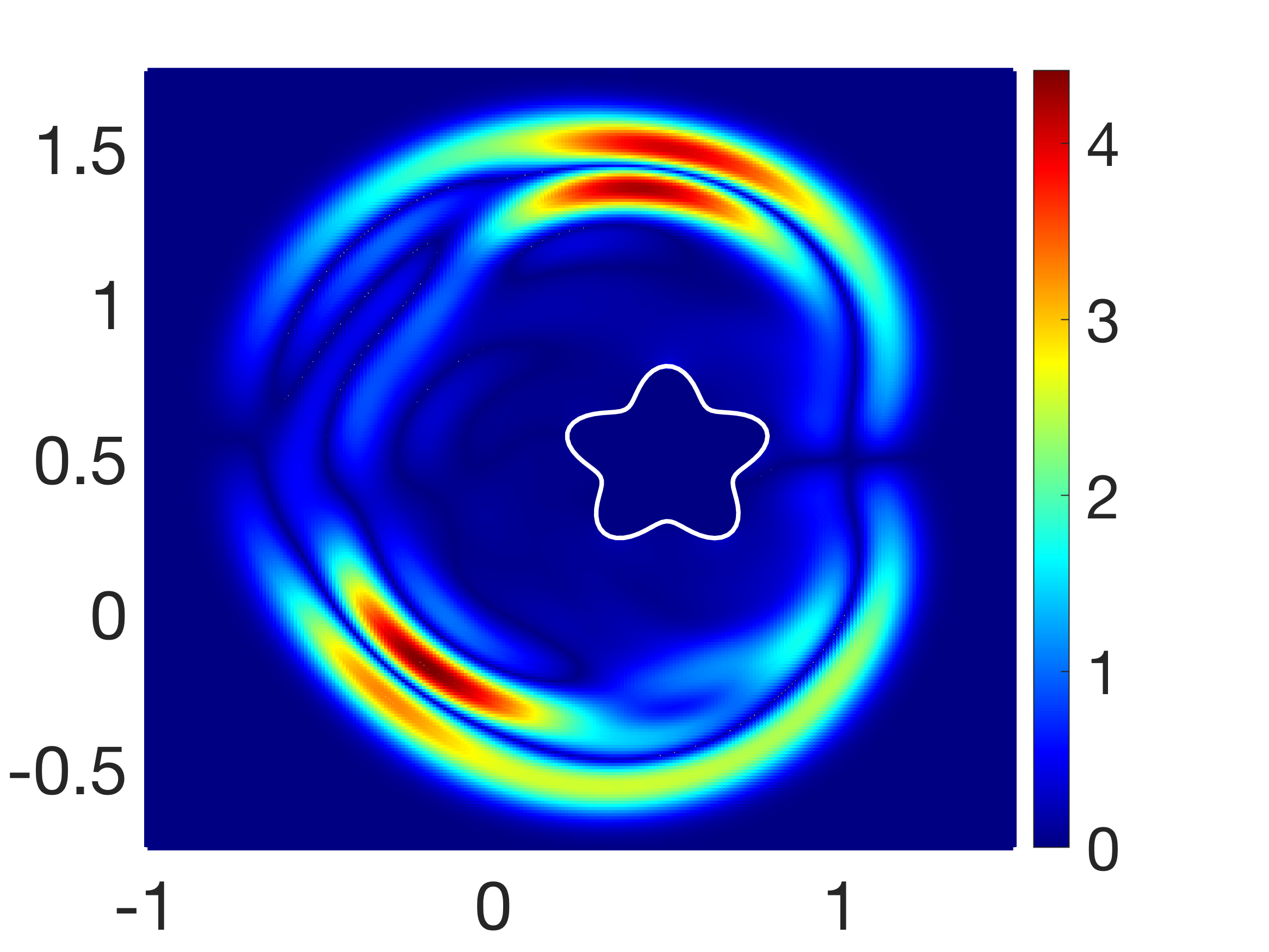}}  
			 }
\end{adjustbox}
\begin{adjustbox}{max width=1.25\textwidth,center}
\subfigure[$H_y$]{
 	{\includegraphics[width=2.25in]{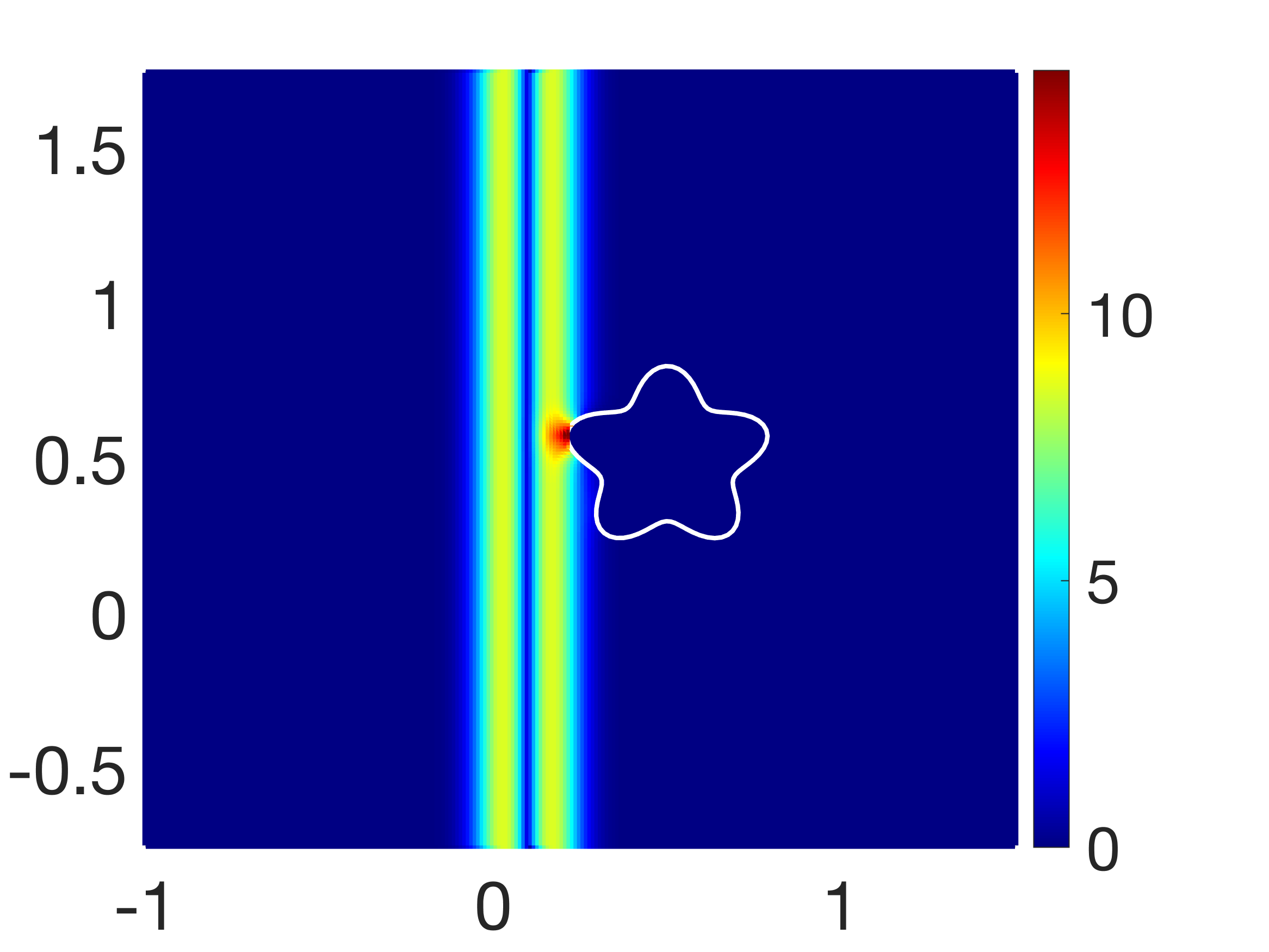}}
 	{\includegraphics[width=2.25in]{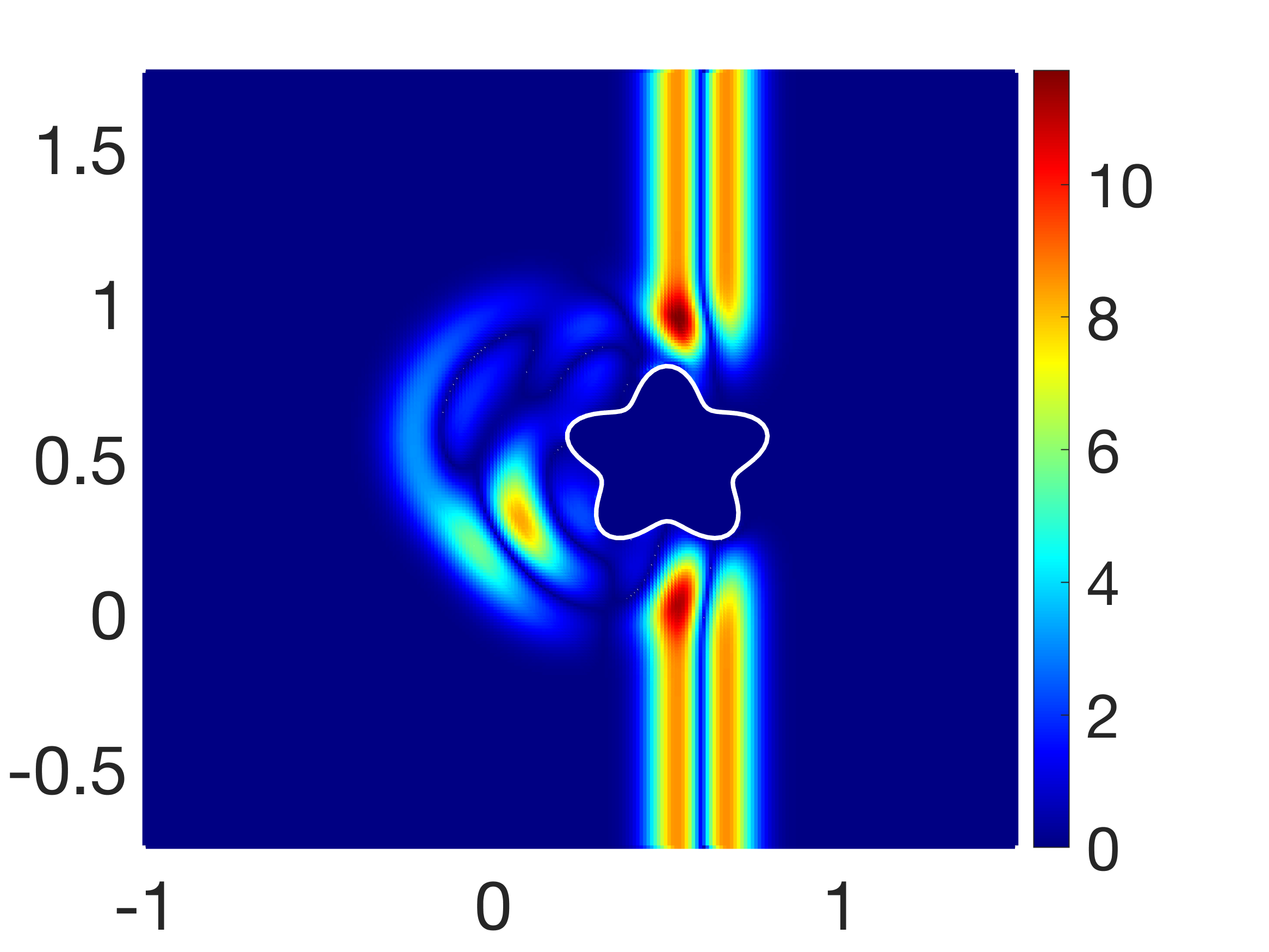}}
        {\includegraphics[width=2.25in]{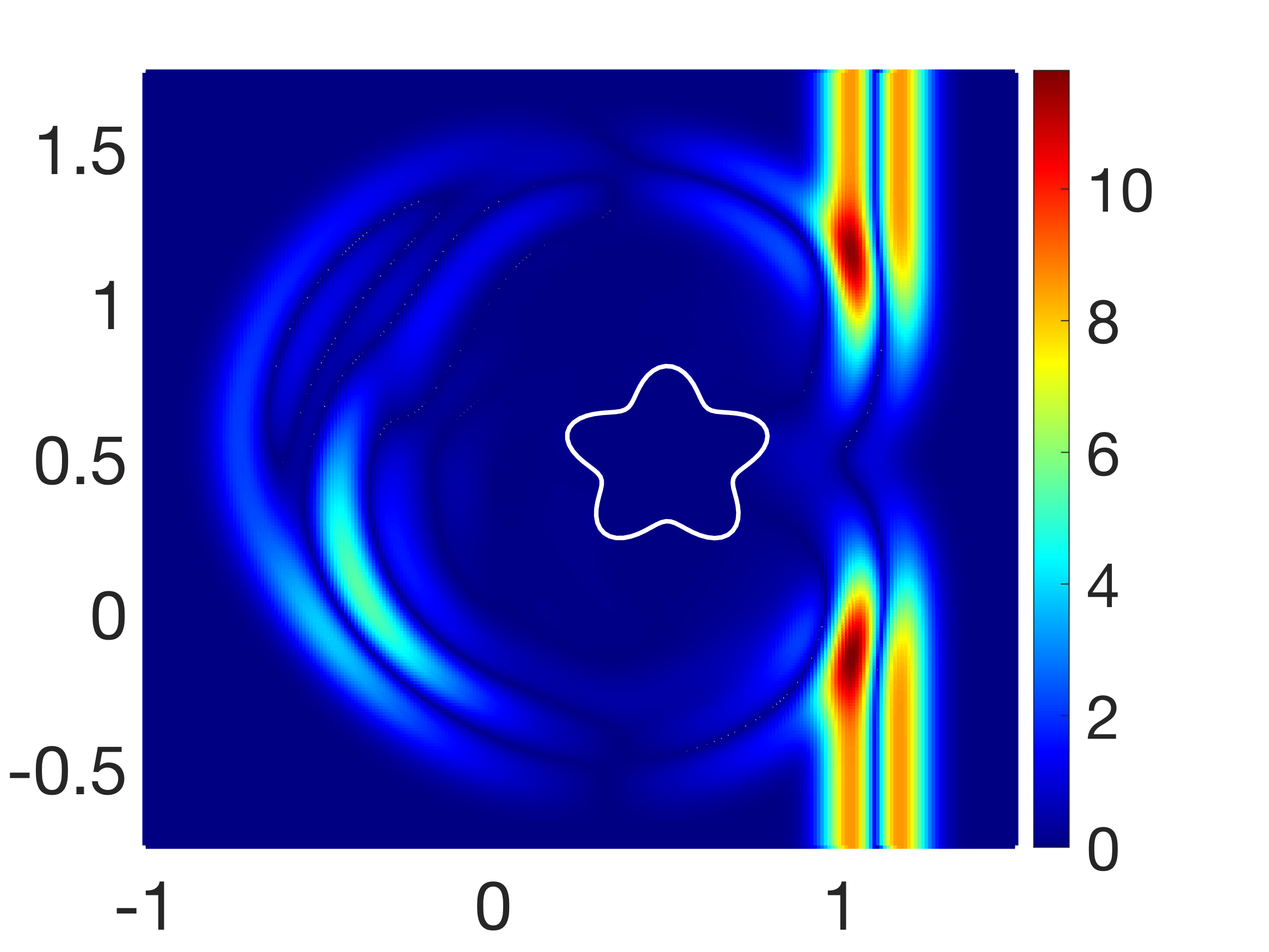}}  
			 }
\end{adjustbox}
\begin{adjustbox}{max width=1.25\textwidth,center}
\subfigure[$E_z$]{			 
	{\includegraphics[width=2.25in]{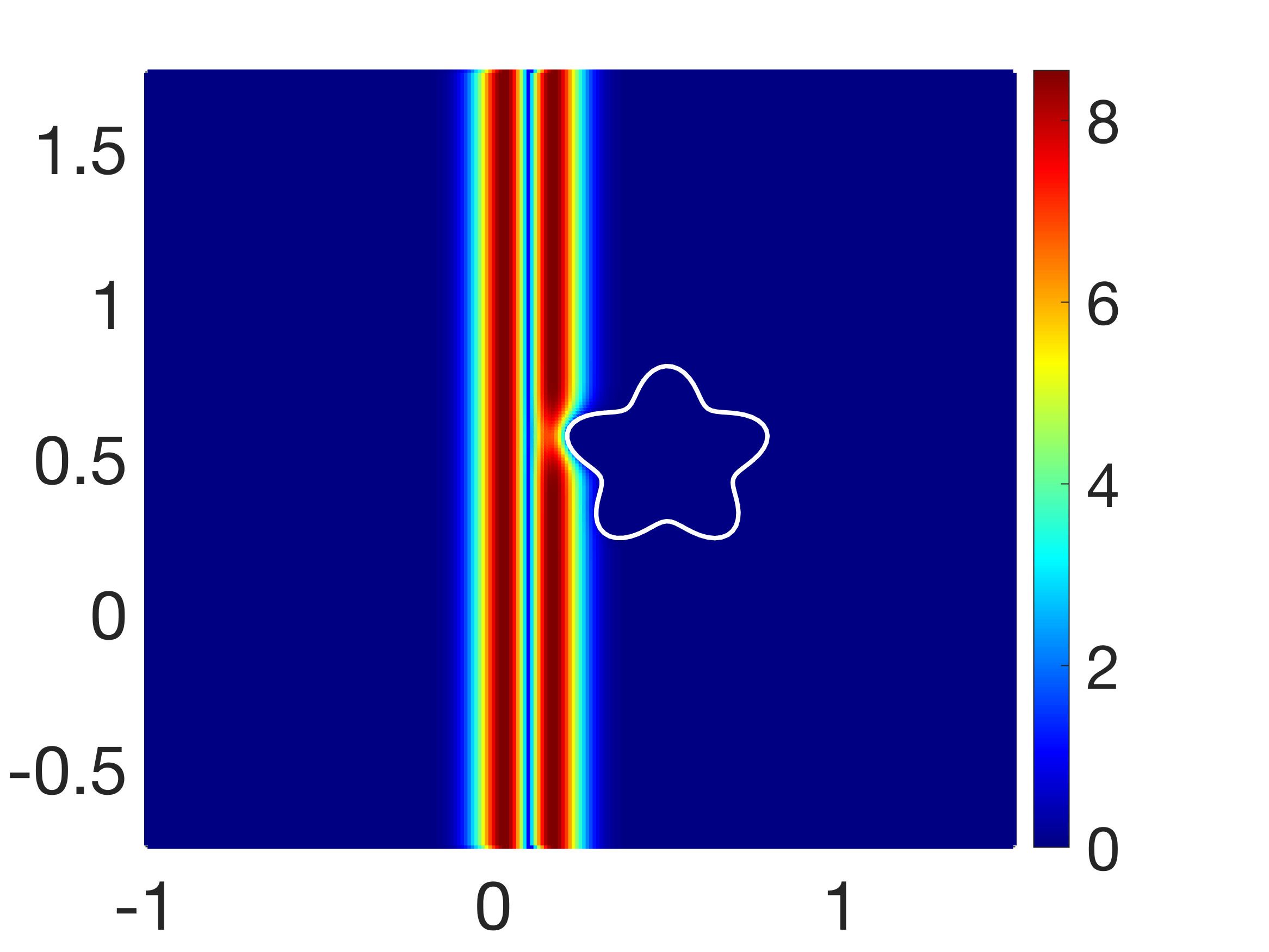}}
	{\includegraphics[width=2.25in]{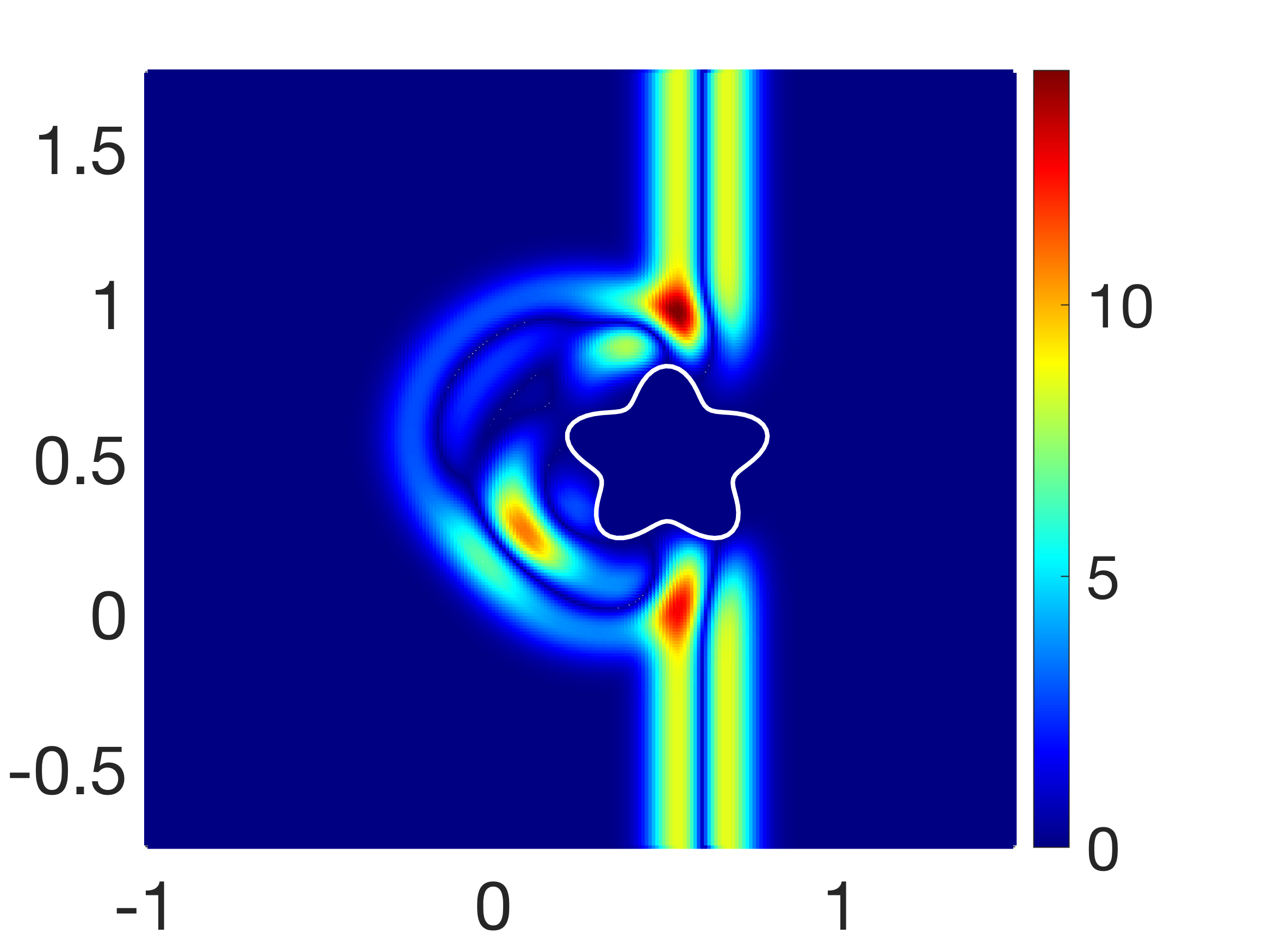}}
	{\includegraphics[width=2.25in]{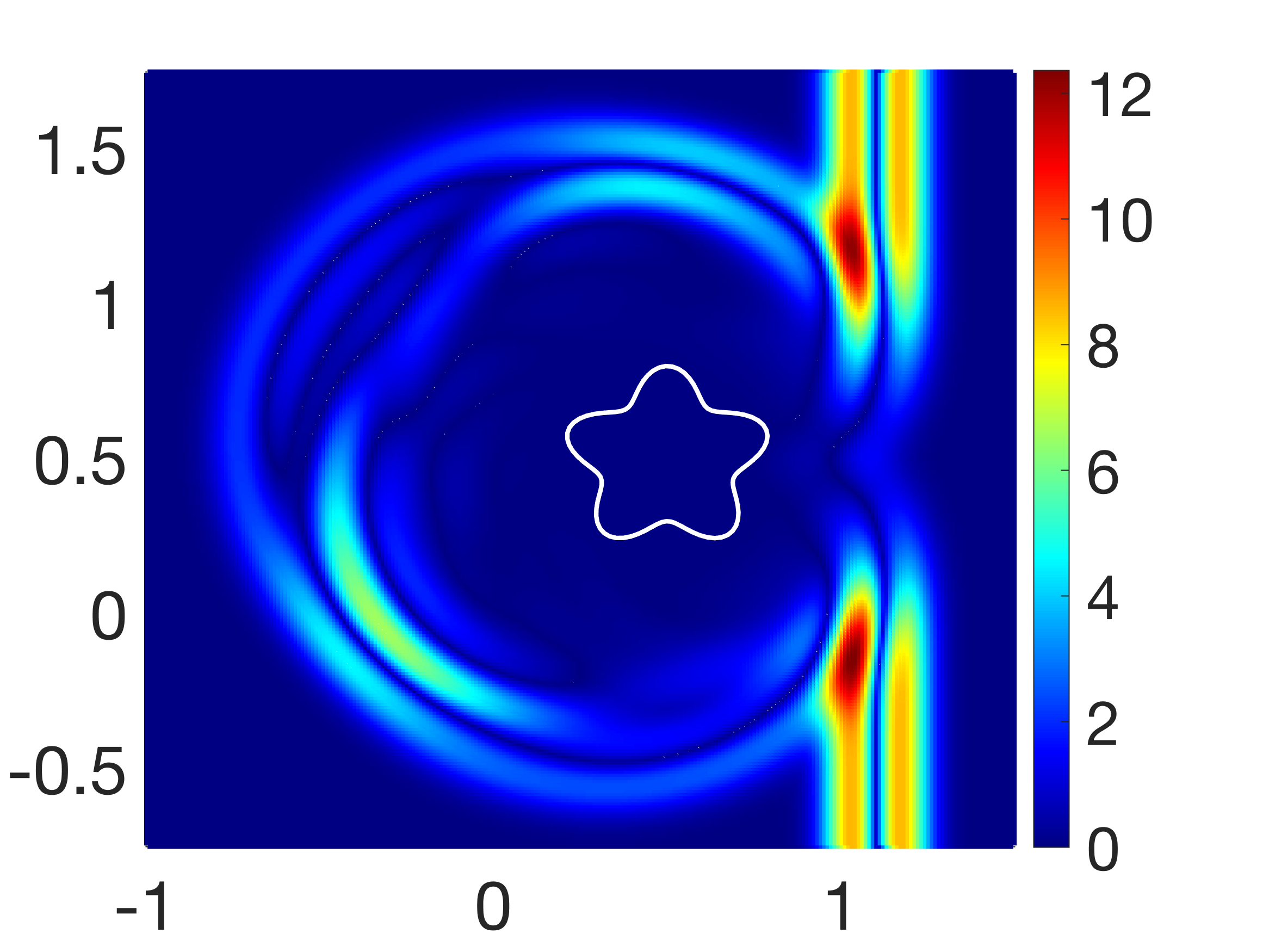}}
  }
\end{adjustbox}
  \caption{The evolution of the magnitude of components $H_x$, $H_y$ and $E_z$ with $h = \tfrac{1}{100}$ and $\Delta t = \tfrac{h}{2}$ using the CFM-$4^{th}$ scheme and the 5-star embedded PEC. From left to right, we show the computed electric field and 
  magnetic field at respectively $t\in \{ 0.4, 0.9, 1.4\}$ and $t-\tfrac{\Delta t}{2}$. The embedded boundary is represented by the white line.}
  \label{fig:fiveStarScattering}  
\end{figure}
\begin{figure}
\centering
\begin{adjustbox}{max width=1.25\textwidth,center}
\subfigure[$H_x$]{
 	{\includegraphics[width=2.25in]{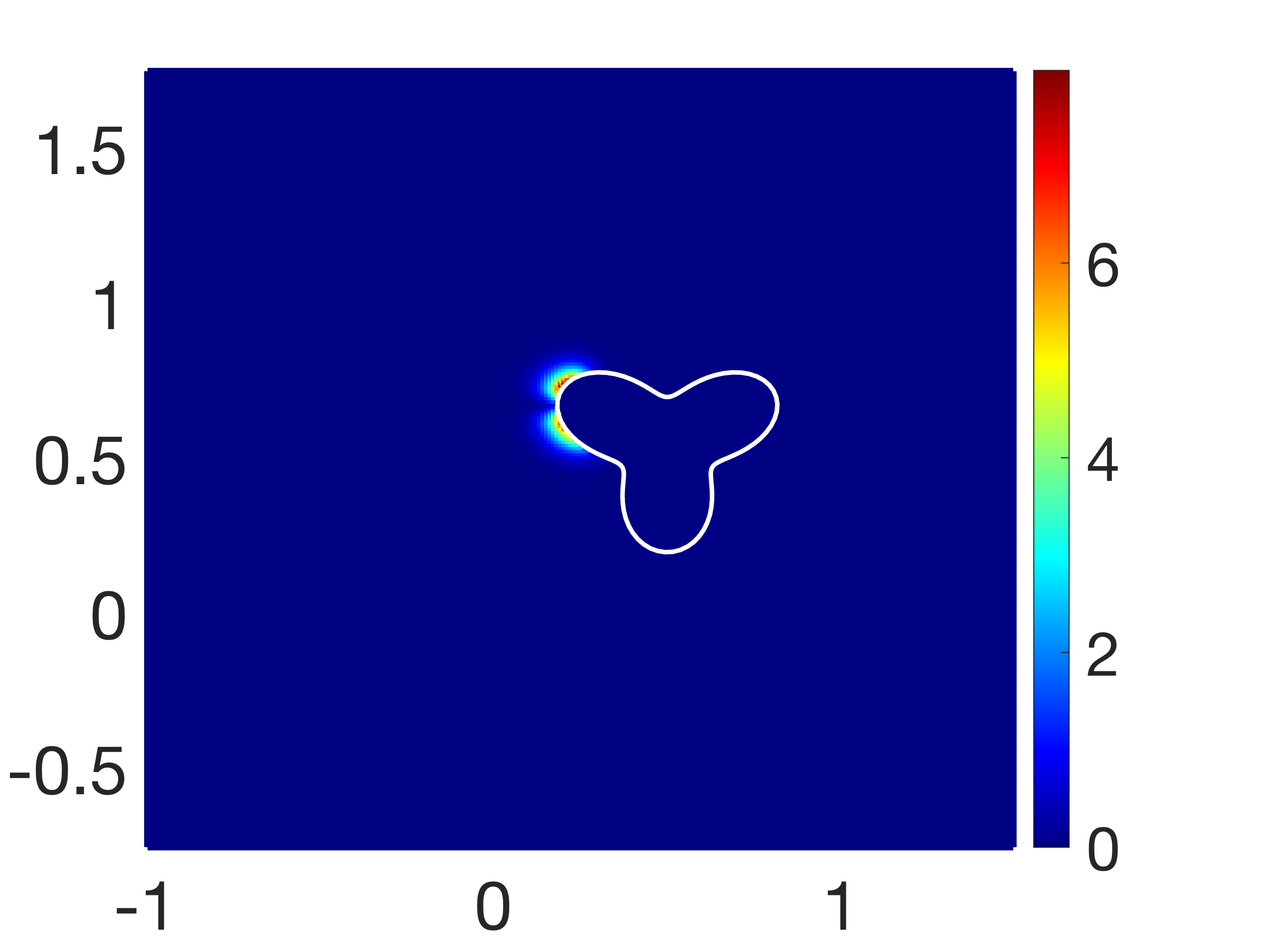}}
 	{\includegraphics[width=2.25in]{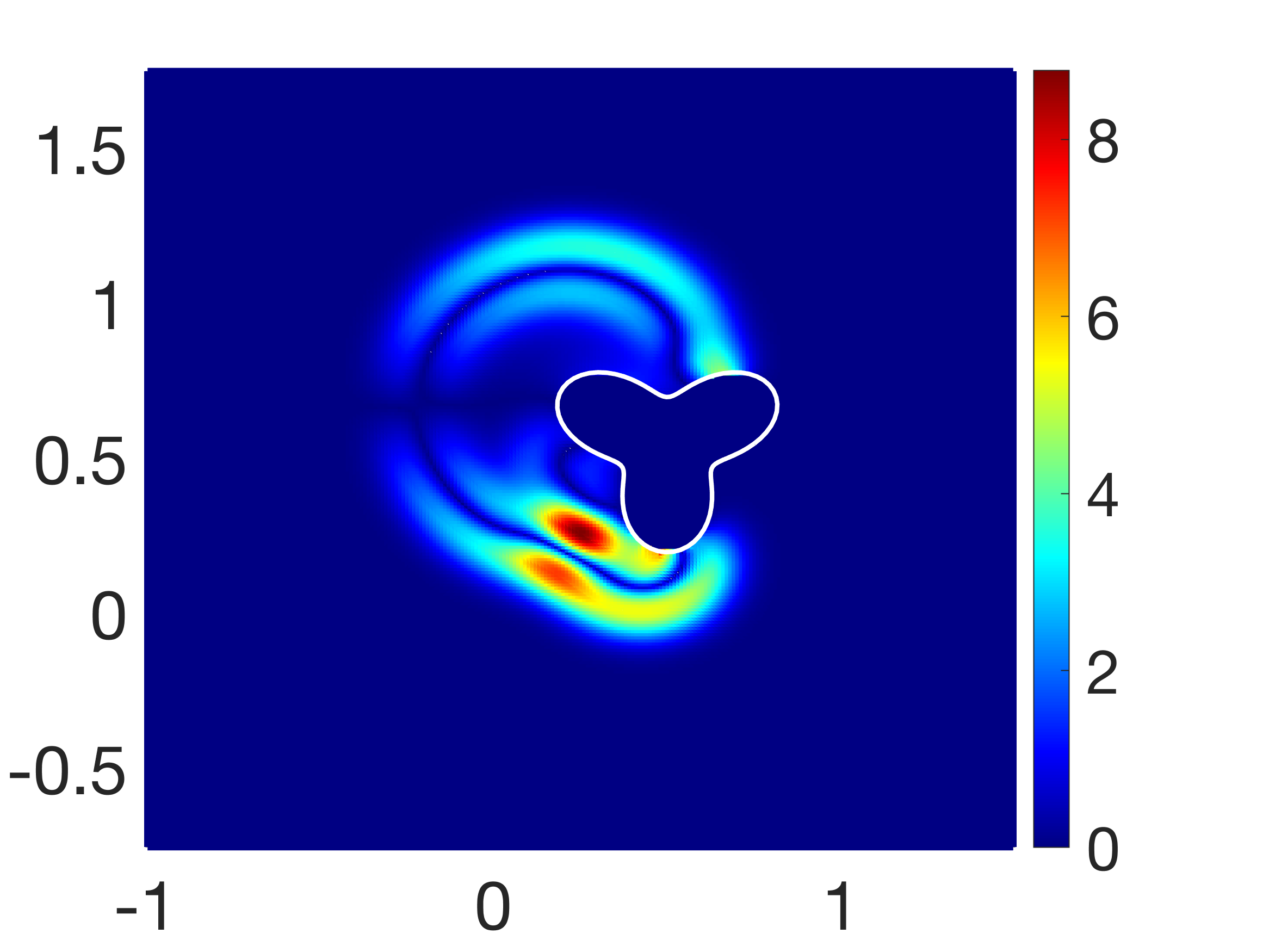}}
			 {\includegraphics[width=2.25in]{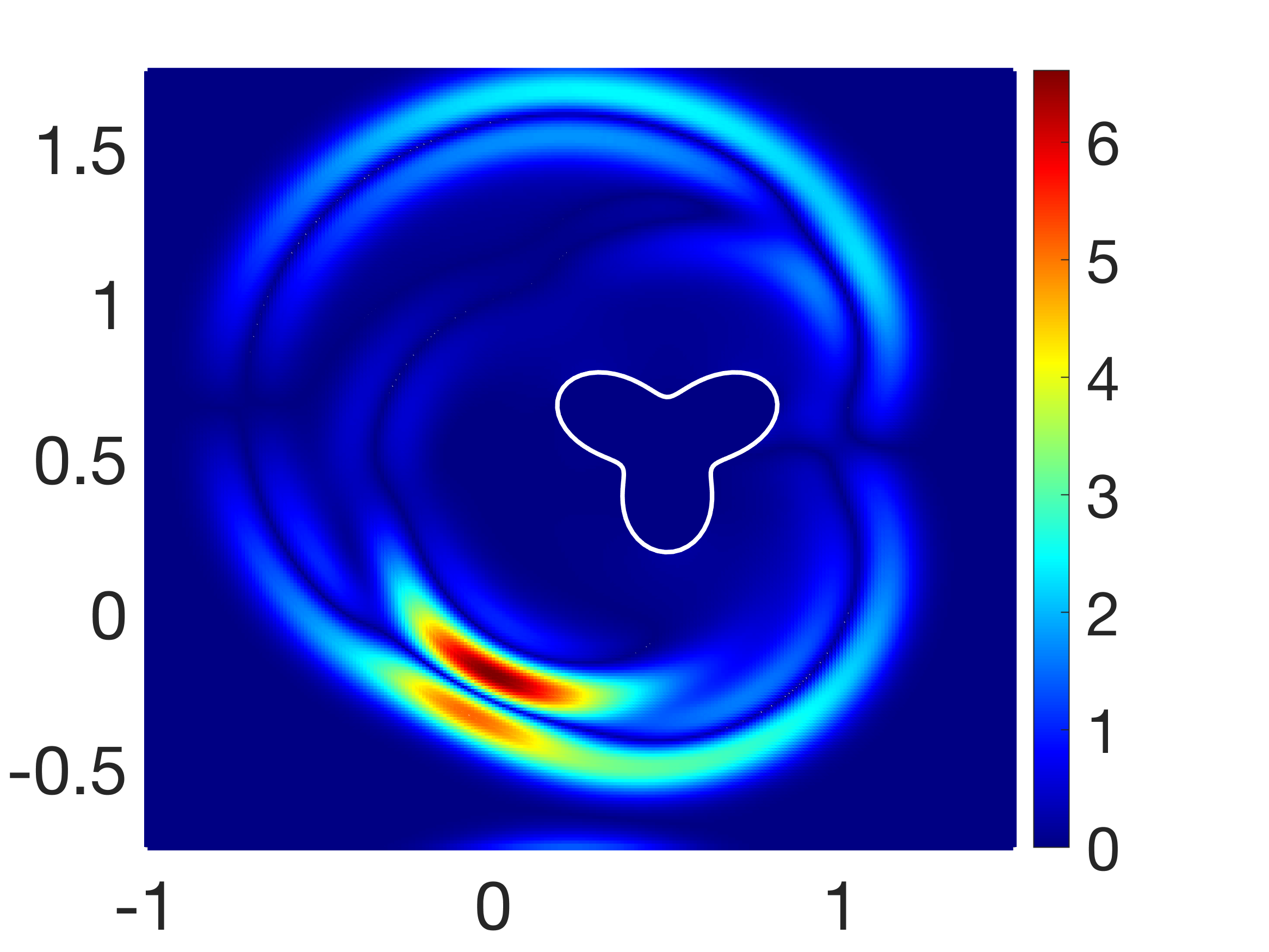}}  
			 }
\end{adjustbox}
\begin{adjustbox}{max width=1.25\textwidth,center}
\subfigure[$H_y$]{
 	{\includegraphics[width=2.25in]{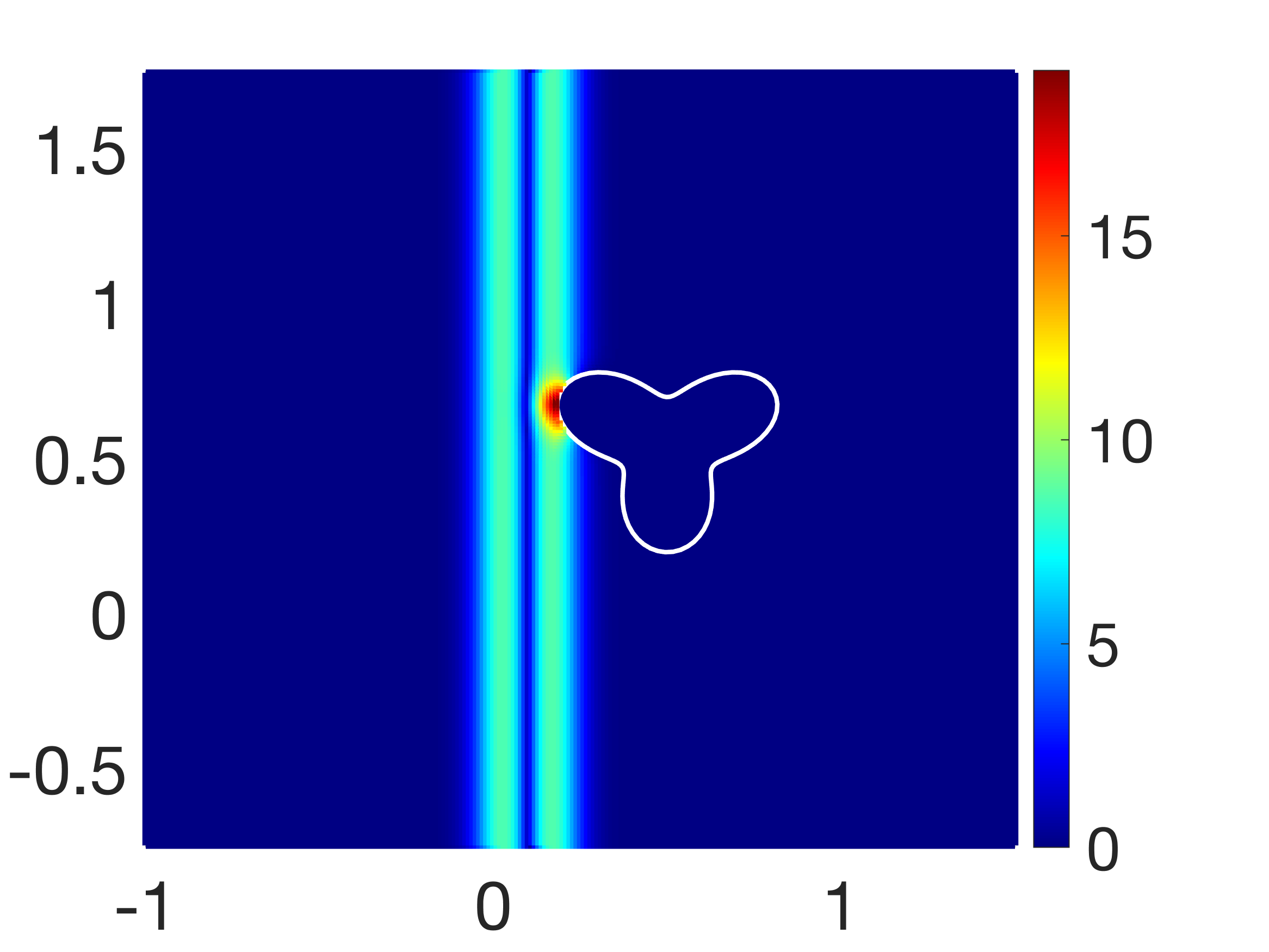}}
 	{\includegraphics[width=2.25in]{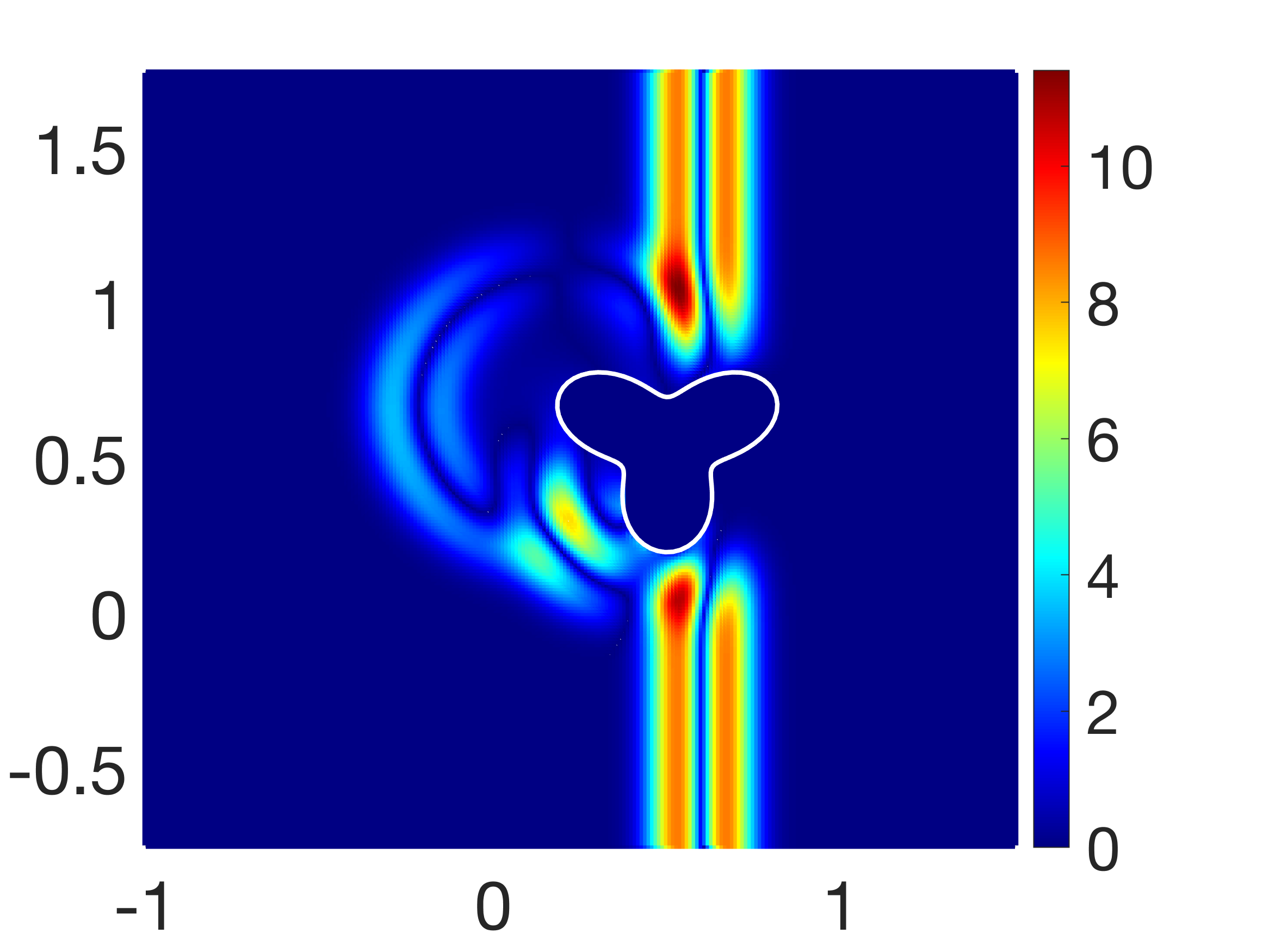}}
			 {\includegraphics[width=2.25in]{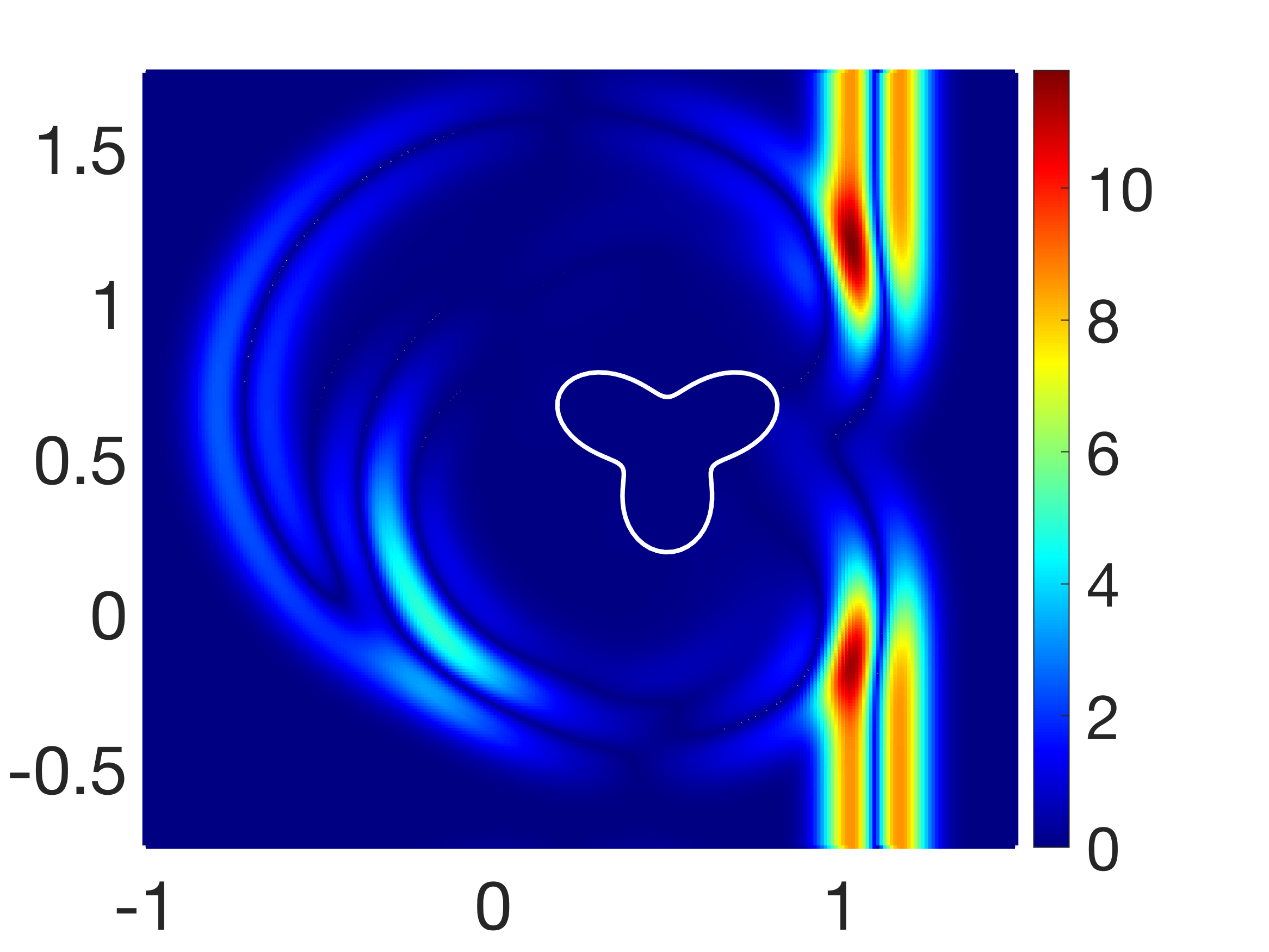}}  
			 }
\end{adjustbox}
\begin{adjustbox}{max width=1.25\textwidth,center}
\subfigure[$E_z$]{			 
	{\includegraphics[width=2.25in]{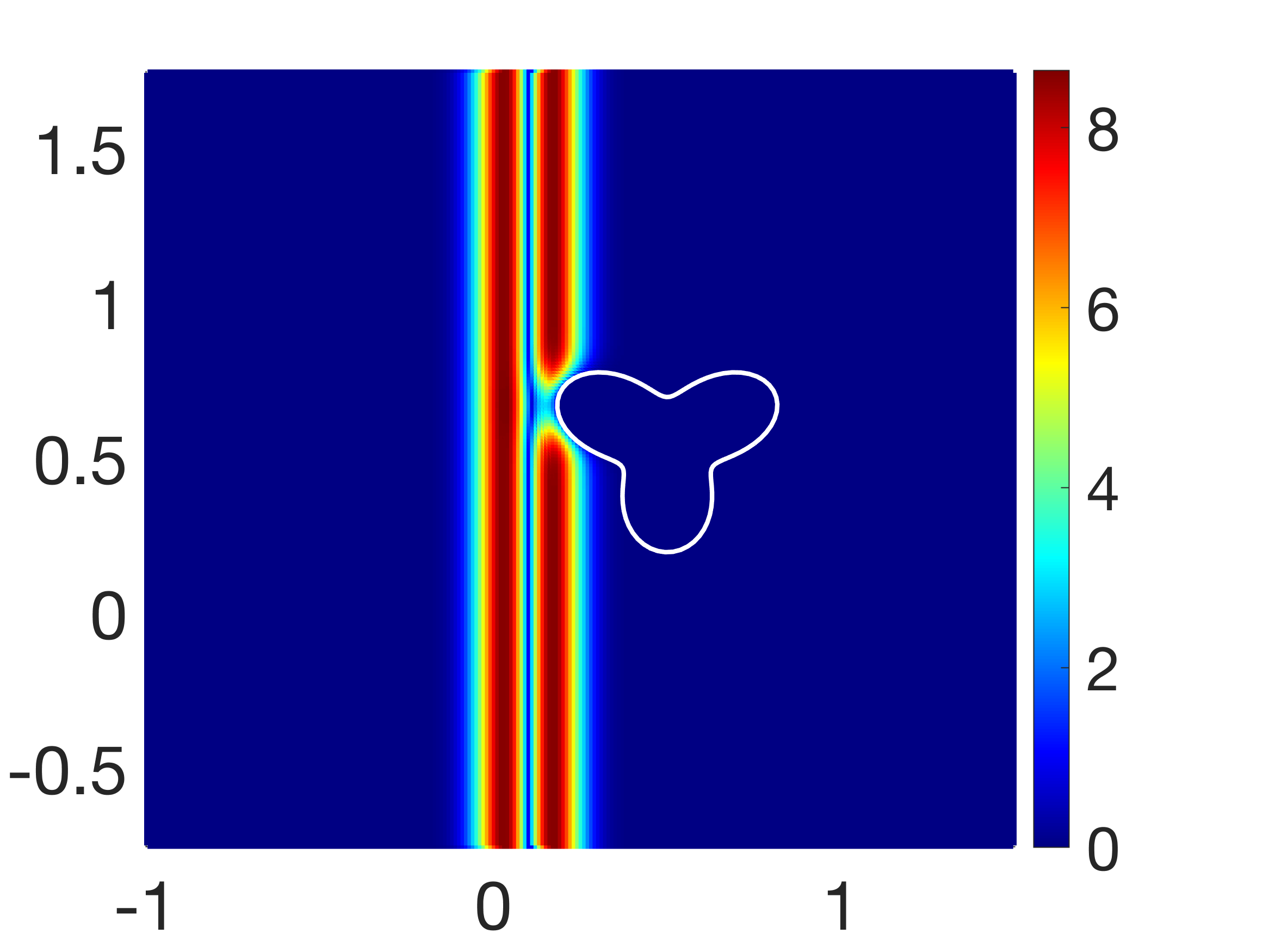}}
	{\includegraphics[width=2.25in]{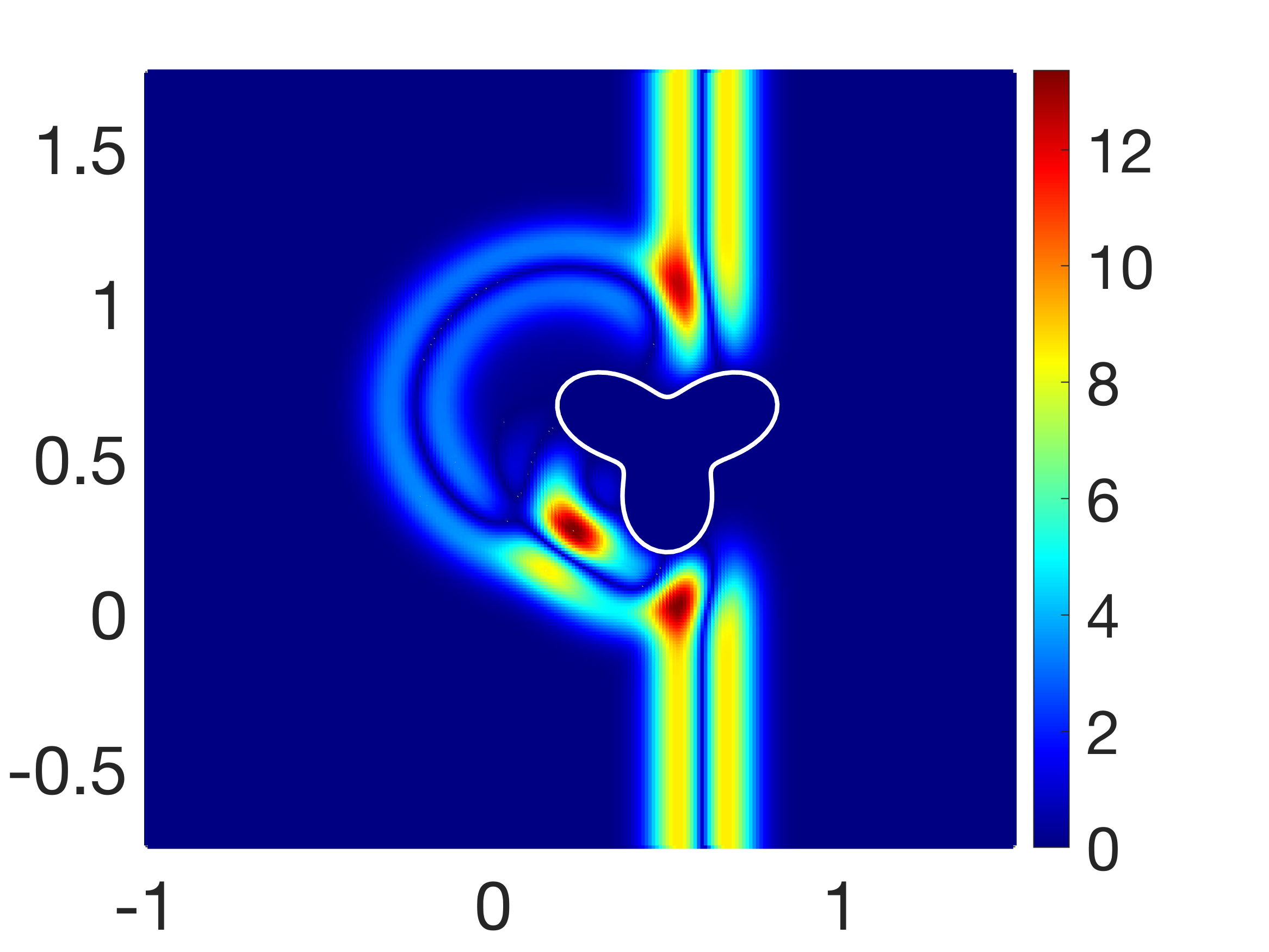}}
	{\includegraphics[width=2.25in]{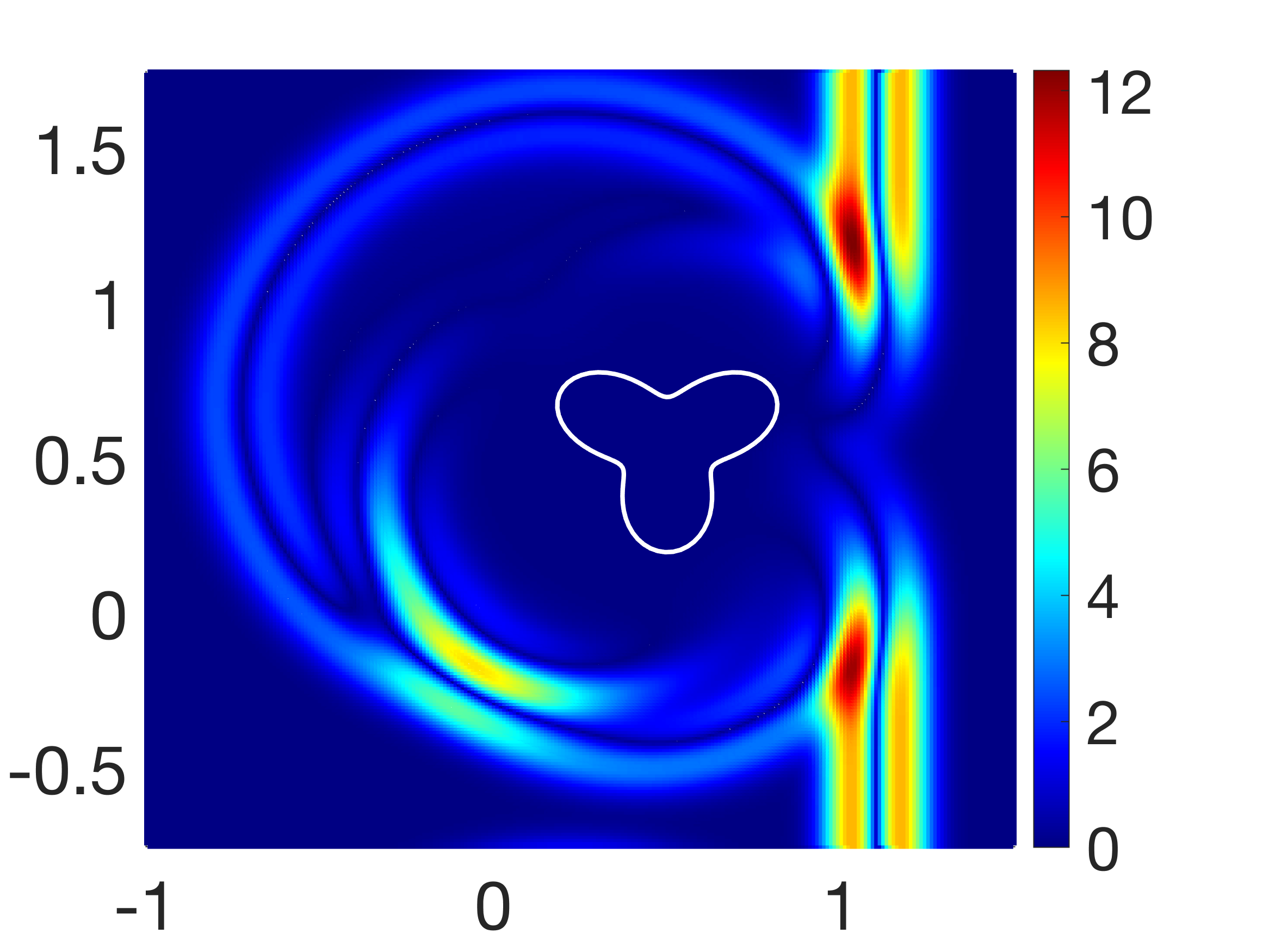}}
  }
\end{adjustbox}
  \caption{The evolution of the magnitude of components $H_x$, $H_y$ and $E_z$ with $h = \tfrac{1}{100}$ and $\Delta t = \tfrac{h}{2}$ using the CFM-$4^{th}$ scheme and the 3-star embedded PEC. From left to right, we show the computed electric field and 
  magnetic field at respectively $t\in \{ 0.4, 0.9, 1.4\}$ and $t-\tfrac{\Delta t}{2}$. The embedded boundary is represented by the white line.}
  \label{fig:triStarScattering} 
\end{figure}

\section{Conclusions}
This work proposes FDTD schemes based on the Correction Function Method for 
	Maxwell's equations with embedded PEC boundary conditions.
The associated minimization problems are well-posed for an appropriate representation 
	of the embedded boundary within 
	the local patch,
	and an appropriate choice of penalization coefficients and 
	fictitious interfaces.
Fictitious interfaces can induce some issues for long time simulations. 
The penalization coefficient associated with fictitious interface conditions 
	must be therefore chosen small enough to avoid any stability issues as the time-step size diminishes.
We have applied the CFM to the well-known Yee scheme and a fourth-order staggered FDTD scheme. 
It is worth mentioning that the proposed numerical strategy can be implemented as a black-box to existing 
	softwares. 
Based on numerical examples,
	it has been shown that the proposed CFM-FDTD schemes can handle embedded PEC 
	problems with
	various geometries of the boundary
	while retaining high-order convergence and 
	without significantly increasing the complexity of the proposed numerical approach.

\section*{Acknowledgments}
The authors are grateful to Dr. Marc Laforest and Dr. Serge Prudhomme of Polytechnique Montr{\'e}al for their support.
The authors also thank Damien Tageddine for helpful conversations. 
The research of JCN was partially supported by the NSERC Discovery Program.
This is a pre-print of an article published in Journal of Scientific Computing. The final authenticated version is available online at: https://doi.org/10.1007/s10915-021-01591-z.

\appendix 

\section{Truncation Error Analysis}
As shown in \cite{LawMarquesNave2020}, 
	the CFM can reduce the order in space of an original FD scheme for unsteady problems. 
Proposition~\ref{lem:errorAnalysis} provides a general result on the order in space of a corrected FD scheme.
\begin{proposition} \label{lem:errorAnalysis}
Let us consider a domain $\Omega$, 
	a time interval $I$ and 
	an interface $\Gamma \subset~\Omega$ on which there is interface jump conditions.  
Assume that the correction function coming from the CFM 
	is smooth enough and is such that 
\begin{equation} \label{eq:semiDiscreteForm}
	\partial_t \hat{\mathbold{U}} + L\,(\hat{\mathbold{U}}+A\,\hat{\mathbold{D}})  = \mathbold{F},
\end{equation}
	where $\hat{\mathbold{U}}$ is the vector of true solution values, 
	$A$ is a rectangular matrix with either $0$ or $\pm 1$ as components,
	$\hat{\mathbold{D}}$ is the vector of true correction function values, 
	$L$ is a spatial finite difference operator of order $n$ that approximates $q$-order derivatives 
	and $\mathbold{F}$ is a source term.
A $(k+1)$-order approximation of the correction function 	
	leads to a corrected FD scheme of order $\min\{n,k-q+1\}$ in space. 
\end{proposition}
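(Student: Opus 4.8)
The plan is to perform a local truncation error analysis, inserting the exact solution into the corrected scheme and separating the two independent error mechanisms: the consistency error of the order-$n$ stencil $L$, and the error committed by replacing the exact correction function $\hat{\mathbold{D}}$ by the degree-$k$ polynomial approximation actually used. First I would fix notation, writing the computed scheme as $\partial_t \mathbold{U}_h + L(\mathbold{U}_h + A\,\mathbold{D}) = \mathbold{F}$, where $\mathbold{D}$ is the polynomial approximation of the correction function produced by the minimization problem \eqref{eq:minPblm}. Since $\mathbold{D}$ is a degree-$k$ space-time polynomial fit of $\hat{\mathbold{D}}$ on a patch $\Omega_\Gamma^h$ of diameter $O(h)$, standard polynomial approximation theory supplies the entrywise bound $\|\mathbold{D}-\hat{\mathbold{D}}\|_\infty = O(h^{k+1})$, which is the quantitative meaning of a $(k+1)$-order approximation and the place where the parameter $k$ enters.

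Next I would define the local truncation error $\boldsymbol{\tau}$ by substituting the exact nodal values $\hat{\mathbold{U}}$ (together with the approximate $\mathbold{D}$ that the scheme actually uses) into the scheme, and then add and subtract $L\,A\,\hat{\mathbold{D}}$. This produces the decomposition
\begin{equation*}
\boldsymbol{\tau} = \big(\partial_t \hat{\mathbold{U}} + L(\hat{\mathbold{U}} + A\,\hat{\mathbold{D}}) - \mathbold{F}\big) + L\,A\,(\mathbold{D}-\hat{\mathbold{D}}).
\end{equation*}
The first parenthesis is the residual obtained from the exactly-corrected field: because adding $A\,\hat{\mathbold{D}}$ converts the discontinuous stencil data into samples of the smooth extension of the solution across $\Gamma$ (the ``smooth enough'' hypothesis), the order-$n$ operator $L$ reproduces the continuous spatial operator up to $O(h^n)$. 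This is exactly the consistency statement encoded in \eqref{eq:semiDiscreteForm}, and it is how the order $n$ of $L$ enters the estimate.

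The crux of the argument is the bound on the second term $L\,A\,(\mathbold{D}-\hat{\mathbold{D}})$. Here I would exploit that $L$ approximates a $q$-order derivative, so it factors as $L = h^{-q}\,\tilde{L}$ with $\tilde{L}$ having $O(1)$ entries and a fixed, $h$-independent number of nonzeros per row; combined with $\|A\|_\infty = O(1)$ this gives $\|L\,A\,(\mathbold{D}-\hat{\mathbold{D}})\|_\infty \le C\,h^{-q}\,\|\mathbold{D}-\hat{\mathbold{D}}\|_\infty = O(h^{k+1-q}) = O(h^{k-q+1})$. Combining the two contributions yields $\|\boldsymbol{\tau}\|_\infty = O\big(h^{\min\{n,\,k-q+1\}}\big)$, so the corrected scheme is consistent to that order in space; invoking the stability of the underlying FDTD scheme established earlier (the correction enters as a bounded forcing term, cf. the discussion following \eqref{eq:withCFM} and \cite{Gustafsson1995}), the order in space of the scheme is $\min\{n,\,k-q+1\}$.

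The step I expect to be the main obstacle is the bound on $L\,A\,(\mathbold{D}-\hat{\mathbold{D}})$: one must check that the $h^{-q}$ scaling is the \emph{only} loss, i.e. that the $O(h^{k+1})$ approximation bound is uniform over the patch and that no further powers of $h$ hide in the stencil normalization or in the fact that $L$ merely approximates, rather than equals, the $q$-th derivative. An equivalent and perhaps more transparent route that I would keep in reserve is to estimate the $q$-th derivative of the error function $e = \mathbold{D}-\hat{\mathbold{D}}$ directly through a Bramble--Hilbert (Markov-type) inequality, giving $\|e^{(q)}\|_\infty = O(h^{k+1-q})$, and then note that the order-$n$ difference operator $L$ reproduces this derivative up to corrections that are of the same order or higher; both routes lead to the same exponent $k-q+1$.
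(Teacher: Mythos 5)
Your proposal is correct and follows essentially the same route as the paper's proof: the key step in both is that the $(k+1)$-order approximation error $\mathbold{D}-\hat{\mathbold{D}} = \mathcal{O}(\ell_h^{k+1})$ is amplified by the $h^{-q}$ scaling of the stencil $L$, yielding a contribution of $\mathcal{O}(h^{k-q+1})$ that is then combined with the order $n$ of the underlying scheme. Your write-up is more explicit about the truncation-error decomposition and the uniformity of the polynomial approximation bound over the patch, but the argument is the same.
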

\begin{proof}
A $(k+1)$-order approximation of $\hat{\mathbold{D}}$ leads to 
\begin{equation*}
	\mathbold{D} = \hat{\mathbold{D}} + \mathcal{O}(\ell_h^{k+1}),
\end{equation*}
	where $\ell_h = \beta\,h$ is the length of the space-time patch, 
	$\beta$ is a positive constant and $h$ is the mesh grid size. 
The discrete operator $L$,
	that approximates $q$-order derivatives, 
	involves components scaled by a factor $\tfrac{1}{h^q}$.
Hence, 
	$L\,A\,\mathbold{D} = L\,A\,\hat{\mathbold{D}} + \mathcal{O}(\ell_h^{k+1}\,h^{-q}) =  L\,A\,\hat{\mathbold{D}} + \mathcal{O}(h^{k-q+1})$.
\end{proof}
For problems that do not involve transient derivatives, 
	we have $$\hat{\mathbold{U}} = L^{-1}\,\mathbold{F} - A\,\hat{\mathbold{D}}$$ 
	and the order of the corrected FD scheme is then $\min\{n,k+1\}$.
\section*{References}

\bibliography{references}

\end{document}